\documentclass{article}
\usepackage[utf8]{inputenc}
\usepackage[margin=1 in]{geometry}
\usepackage{amsfonts, amssymb}
\usepackage{mathtools}
\usepackage{xcolor}
\usepackage{stmaryrd}
\definecolor{webgreen}{rgb}{0,.5,0}
\definecolor{darkblue}{rgb}{0.0,0,0.7}
\usepackage[colorlinks=true,
linkcolor=black, citecolor=black, urlcolor=black]{hyperref}
\usepackage{amsmath}
\usepackage{amsthm}
\usepackage{enumitem}
\providecommand\ldb{\llbracket}
\providecommand\rdb{\rrbracket}

\newcommand{\newword}[1]{\textcolor{blue}{\textbf{\textit{#1}}}}

\newtheorem{theorem}{Theorem}
\numberwithin{theorem}{section}
\newtheorem{lemma}[theorem]{Lemma}
\newtheorem{prop}[theorem]{Proposition}
\newtheorem{cor}[theorem]{Corollary}

\theoremstyle{definition}
\newtheorem{definition}[theorem]{Definition}
\newtheorem{remark}[theorem]{Remark}
\newtheorem{example}[theorem]{Example}
\newtheorem{algorithm}[theorem]{Algorithm}

\theoremstyle{remark}

\newcommand{\mb}{\mathbb}

\newcommand{\mc}{\mathcal}
\newcommand{\tn}{\textnormal}
\newcommand{\se}{\subseteq}
\newcommand{\ol}{\overline}
\newcommand{\lam}{\lambda}

\newcommand{\Lam}{\Lambda}
\newcommand{\mf}{\mathfrak}

\newcommand{\bs}{\backslash}

\newcommand{\csf}[1]{X_{#1}}
\newcommand{\ksf}[1]{\overline{X}_{#1}}
\newcommand{\augm}[1]{\widetilde{m}_{#1}}
\newcommand{\kaugm}[1]{\overline{\widetilde{m}}_{#1}}
\newcommand{\m}{\ol{\widetilde{m}}}
\newcommand{\odotprod}{\odot}

\newcommand{\WGraphs}{\textsf{\textup{WGraphs}}}
\newcommand{\mWGraphs}{\mf{m}\textsf{\textup{WGraphs}}}
\newcommand{\Sym}{\textsf{\textup{Sym}}}
\newcommand{\mSym}{\mf{m}\textsf{\textup{Sym}}}

\usepackage[maxbibnames=10,style=alphabetic,maxalphanames=10]{biblatex}
\title{New interpretations for Kromatic symmetric function expansions}

\author{Aarush Kulkarni \footnote{Acton-Boxborough Regional High School. Email: \texttt{\href{mailto:aarushkulkarni26@gmail.com}{aarushkulkarni26@gmail.com}}} \and Laura Pierson \footnote{Department of Mathematics, Harvard University. Email: \texttt{\href{mailto:lcpierson73@gmail.com}{lcpierson73@gmail.com}}}}

\begin{document}
\maketitle

\begin{abstract}
The \newword{Kromatic symmetric function (KSF)} $\ksf{G}$, introduced by Crew, Pechenik, and Spirkl \cite{crew2023kromatic}, is a $K$-theoretic analogue of the \newword{chromatic symmetric function (CSF)} $\csf{G}$. We study expansion formulas for the KSF in two different bases. First, we explore recursive ways to compute the KSF's expansion in the \newword{$K$-theoretic monomial symmetric function} basis $\ol{\widetilde{m}}_\lam$ from \cite{crew2023kromatic}, generalizing formulas that were used in \cite{pierson2025graphs} to show that the KSF distinguishes certain families of graphs that are not distinguished by the CSF. Second, we give new interpretations for the KSF's expansion in the \newword{$K$-theoretic power sum basis} $\ol{p}_\lam$, using an inclusion-exclusion approach like the one in \cite{stanley1995symmetric} instead of the acyclic orientation approach from \cite{pierson2025lyndon}. Finally, we give $K$-analogues of Schmitt's and of Humpert and Martin's antipode formulas for the Hopf algebra of graphs, along with a Hopf algebra interpretation for both the $\ol{p}$-expansion and $\m$-expansion of the KSF.

\end{abstract}

\section{Introduction}
\label{sec:introduction}

The \newword{Kromatic symmetric function (KSF)} was introduced by Crew, Pechenik, and Spirkl in \cite{crew2023kromatic} as a $K$-theoretic analogue for Stanley's \newword{chromatic symmetric function (CSF)}. While the CSF $X_G$ encodes \newword{proper colorings} of the graph $G$ that assign each vertex a single color such that adjacent vertices get distinct colors, the KSF instead encodes \newword{proper set colorings} that assign each vertex a nonempty set of colors such that adjacent vertices get disjoint color sets. While the KSF does not actually have a known $K$-theoretic interpretation, it is still of algebraic interest because it has a nice interpretation in terms of Hopf algebras \cite{marberg2023kromatic}, and it is of combinatorial interest because it has elegant expansion formulas using $K$-analogues of the Schur functions $s_\lam$ \cite{crew2023kromatic}, the monomial symmetric functions $m_\lam$ \cite{crew2023kromatic}, the elementary symmetric functions $e_\lam$ \cite{marberg2023kromatic}, and the power sum symmetric functions  $p_\lam$ \cite{pierson2025lyndon, pierson2024power}. It is also interesting as a graph invariant because it contains strictly more information about a graph than its CSF, while still failing to distinguish all graphs \cite{crew2023kromatic, pierson2025graphs}.

After introducing some relevant definitions in \S\ref{sec:background}, we explore two new interpretations for the $\ol{p}$-expansion of the KSF:
\begin{itemize}
     \item \S\ref{sec:monomial}: In \cite{pierson2025graphs}, the second author and Samanta developed recursive formulas for the $\m_\lam$-expansion of the KSF for certain families of graphs, as tools to show that the KSF distinguishes those graphs from each other even though the CSF does not. We found those formulas algebraically interesting in their own right because they make use of Tsujie's $\odot$ multiplication from \cite{tsujie2018chromatic} in new ways, so we dive further into that algebra and generalize those formulas in two different ways. Our new formulas could similarly be used to build more families of graph pairs with equal CSF but different KSF.

    \item \S\ref{sec:power-sum}: Crew, Pechenik, and Spirkl asked in \cite{crew2023kromatic} whether the power sum expansion for the KSF could be computed using an inclusion-exclusion approach similar to the one used for the CSF in \cite{stanley1995symmetric}. We address this question by giving an inclusion-exclusion based recursion for the $\ol{p}$-expansion for the KSF.

    \item \S\ref{sec:hopf}: We give $K$-analogues of the antipode formulas of Schmitt \cite{schmitt1994incidence} and of Humpert and Martin \cite{humpert2012incidence}, and Hopf algebra interpretations for the $\ol{p}$-expansion formula from \cite{pierson2025lyndon}.
\end{itemize}


\section{Background}
\label{sec:background}


As is customary, $\mathbb{N}$ will denote the set of positive integers. We write $[n]$ for the set of positive integers $\{1,2,\dots,n\}.$

A \newword{weighted graph} $G$ is a finite set of vertices $V(G)$ and a set of edges $E(G)$ that are distinct unordered pairs of distinct vertices, together with a weight function $w:V(G)\to\mb{Z}_{>0}$ assigning a positive integer weight to all vertices. Vertices with an edge between them are \newword{adjacent}. Weighted graphs are often written as $(G,w)$, but we will assume that $w$ is incorporated into the data of $G$. A graph is \newword{unweighted} if all weights are equal to 1. Throughout this paper, graphs are simple, and unless otherwise stated, unweighted.

An \newword{independent set} or \newword{stable set} is a subset of the vertices in which no two vertices are adjacent. For a subset $S\se V(G)$, we write $G|_S$ for the \newword{induced subgraph} of $G$ with vertex set $S$, i.e.\ the subgraph with vertex set $S$ and edge set consisting of all edges in $E(G)$ with both endpoints in $S$. For a subset $F\se E(G)$, we write $G_F$ for the subgraph of $G$ with the full vertex set $V(G)$ but with edge set $F$. We say that $F\se E(G)$ is a \newword{flat} if adding any edge of $E(G)\setminus F$ to $G_F$ would decrease the number of connected components. An \newword{acyclic orientation} on $G$ is an assignment of a direction to all edges of $G$ such that no directed cycles are formed. We write $G/F$ for the graph formed by contracting all edges in $F$, where \newword{contracting} an edge $uv$ means deleting the edge and replacing its two endpoints $u$ and $v$ with a single merged vertex $uv$ of weight $w(uv)=w(u)+w(v)$, connected to all vertices in $G$ that were adjacent to at least one of $u$ or $v$.

The \newword{empty graph} $\varnothing$ is the graph with no edges and no vertices. The \newword{disjoint union} $G\sqcup H$ of two graphs $G$ and $H$ is the graph with vertex set $V(G)\sqcup V(H)$ and edge set $E(G)\sqcup E(H)$. The \newword{join} $G\odotprod H$ is the graph that additionally has all edges connecting a vertex of $G$ to a vertex of $H$. Two graphs $G$ and $H$ are \newword{isomorphic} if there is a weight-preserving bijection from $V(G)$ to $V(H)$ that also induces a bijection from $E(G)$ to $E(H)$, so two vertices are adjacent in $G$ if and only if their images are adjacent in $H$.

    The algebraic setting for our invariants is the \newword{ring of symmetric functions}, denoted $\Lambda$. This is a subring of $\mathbb{Q} \ldb x_1,x_2,\dots \rdb $ consisting of the power series of bounded degree that are invariant under any permutation of the variables, and we write $\widehat{\Lam}$ for the ring of all such power series, without the bounded degree condition. A \newword{partition} $\lambda=(\lambda_1,\dots,\lambda_\ell)$ is a sequence of non-increasing positive integers. The integers $\lambda_i$ are the \newword{parts} of $\lambda$, $\ell(\lambda)$ is the \newword{length}, and $|\lambda|=\sum\lambda_i$ is the \newword{size}. If $|\lambda|=n$, we write $\lambda\vdash n$. We primarily use the \newword{augmented monomial symmetric functions}, $\{\augm{\lambda}\}$, as a basis for $\Lambda$, where $\augm{\lambda}:=(\prod_{i\ge 1}r_i(\lambda)!)m_\lambda$ and $r_i(\lambda)$ is the number of parts of $\lambda$ of size $i$ and $m_{\lambda}$ is the monomial symmetric function.

A \newword{proper coloring} of a graph $G$ is a function $\kappa\colon V\to\mathbb{N}$ such that $\kappa(u)\neq\kappa(v)$ for all $\{u,v\}\in E$. The \newword{chromatic symmetric function (CSF)} \cite{stanley1995symmetric} of $G$ is $$ \csf{G}(x_1,x_2,\dots)=\sum_{\kappa\text{ proper coloring}}\ \prod_{v\in V}x_{\kappa(v)}^{w(v)}. $$

\begin{theorem}[\cite{stanley1995symmetric}]
    The coefficient of $\augm{\lambda}$ is the number of ways to partition the vertices of $G$ into stable (independent) sets whose sizes are the parts of $\lambda$. This gives the expansion $$ \csf{G}=\sum_{\lambda\vdash|V(G)|}|\textsf{\textup{Stab}}_\lambda(G)|\cdot \augm{\lambda} $$ where $|\textsf{\textup{Stab}}_\lambda(G)|$ is the number of partitions of $V(G)$ into stable sets, each called a \newword{stable set partition}, with sizes matching the parts of $\lambda$.
\end{theorem}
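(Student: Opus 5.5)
We group proper colorings according to the stable set partition they induce. Given a proper coloring $\kappa$ of $G$, consider its nonempty fibers $\kappa^{-1}(i)$ as $i$ ranges over the colors actually used. Each fiber is a stable set, since adjacent vertices receive distinct colors. Together the fibers form a set partition $\pi$ of $V(G)$ into stable blocks.

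Conversely, fix a stable set partition $\pi=\{B_1,\dots,B_\ell\}$ whose block sizes are the parts of $\lambda$. The proper colorings inducing exactly $\pi$ correspond bijectively to injective maps $f\colon\{B_1,\dots,B_\ell\}\to\mathbb{N}$. Injectivity is what forces distinct blocks to be distinct fibers. Any such map gives a proper coloring, because adjacent vertices lie in different blocks and so get different colors. Hence
\[
X_G=\sum_{\pi}\ \sum_{f \text{ injective}}\ \prod_{j=1}^{\ell} x_{f(B_j)}^{|B_j|},
\]
where $\pi$ ranges over stable set partitions. This uses the unweighted setting of the theorem, so each vertex contributes exponent $1$. (For weighted graphs the same argument works with $|B_j|$ replaced by the total weight of $B_j$.)

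It then remains to identify the inner sum with $\augm{\lambda}$ for $\lambda$ the type of $\pi$. The sum $\sum_{f}\prod_j x_{f(B_j)}^{\lambda_j}$ over injective $f$ counts each monomial $x_{i_1}^{\lambda_1}\cdots x_{i_\ell}^{\lambda_\ell}$ with distinct indices. A given monomial of $m_\lambda$ arises from exactly $\prod_i r_i(\lambda)!$ injective maps. These correspond to permuting, among themselves, the blocks of equal size, since swapping two blocks of the same size leaves the monomial unchanged. Blocks of different sizes cannot be swapped, because their exponents differ. So the inner sum equals $\bigl(\prod_i r_i(\lambda)!\bigr)m_\lambda=\augm{\lambda}$.

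Summing over all $\pi$ and collecting terms by type gives $X_G=\sum_{\lambda}|\textsf{Stab}_\lambda(G)|\,\augm{\lambda}$. The $\augm{\lambda}$ form a basis, being nonzero rescalings of the $m_\lambda$, so these numbers are the coefficients. The only delicate step is this multiplicity count. The blocks are distinguishable as sets, but the monomial only sees their sizes, and getting that correct is exactly what produces the factor $\prod_i r_i(\lambda)!$.
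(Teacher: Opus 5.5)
Your proof is correct and complete. The paper gives no proof of this statement; it is quoted as background from \cite{stanley1995symmetric}. Your argument is essentially Stanley's original one:
\begin{itemize}
\item group proper colorings by the stable partition formed by their color classes;
\item identify the colorings inducing a fixed $\pi$ with injective maps from its blocks to $\mathbb{N}$;
\item note that each monomial of $m_\lambda$ arises from exactly $\prod_i r_i(\lambda)!$ such maps, which produces $\augm{\lambda}$.
\end{itemize}

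The only alternative route visible in the paper's framework uses Remark~\ref{rem:ksf_to_csf}. One would take lowest-degree terms in the stable set cover expansion of $\ksf{G}$, using that covers by distinct nonempty stable sets of total size $|V(G)|$ are exactly stable set partitions. That route presupposes the $K$-theoretic expansion (proved in \cite{crew2023kromatic}, and rederived Hopf-algebraically in \S\ref{subsec:m}), so your direct count is the more elementary argument.
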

The CSF interacts nicely with graph operations. The disjoint union $G\sqcup H$ and join $G\odotprod H$ satisfy $\csf{G\sqcup H}=\csf{G}\cdot\csf{H}$ and $\csf{G\odotprod H}=\csf{G}\odotprod\csf{H}$, where the \newword{join product} $\odotprod$, introduced by Tsujie in \cite{tsujie2018chromatic}, acts on the augmented monomial basis by $\augm{\lambda}\odotprod\augm{\mu}=\augm{\lambda\sqcup\mu}$, where $\lambda\sqcup\mu$ is the partition obtained by taking all parts of $\lambda$ together with all parts of $\mu$ (so for instance, $321\sqcup 211=322111$). Tsujie's main motivation for introducing the $\odotprod$ multiplication was to make it easier to describe ways to combine stable set partitions in order to prove that the CSF distinguishes a class of graphs known as \newword{trivially perfect graphs}.

A \newword{proper set coloring} of $G$ is a function $\kappa\colon V\to 2^{\mathbb{N}}\setminus\{\varnothing\}$ with each $\kappa(v)$ finite such that $\kappa(u)\cap\kappa(v)=\varnothing$ for all $\{u,v\}\in E$. The \newword{Kromatic symmetric function (KSF)} \cite{crew2023kromatic} of $G$ is $$ \ksf{G}:=\sum_{\kappa\text{ proper set coloring}}\ \prod_{v\in V(G)}\prod_{i\in\kappa(v)}x_i^{w(v)}\in\widehat{\Lam}. $$ The KSF was introduced as a $K$-theoretic analogue of the CSF. It contains more information about $G$ than the CSF does; its lowest degree terms match $\csf{G}$, but it also has additional higher degree terms. This allows it to distinguish some graphs that the CSF cannot \cite{crew2023kromatic}. However, it is not a complete graph invariant, as pairs of nonisomorphic graphs with the same KSF have been found \cite{pierson2025graphs}. The KSF has an expansion formula using a $K$-analogue of the $\widetilde{m}_\lam$ basis, called the \newword{$K$-theoretic augmented monomial symmetric functions} $\kaugm{\lambda}$, which can be defined by $\kaugm{\lambda}:=\ksf{K_\lambda}$, where $K_\lambda$ denotes the complete graph on $\ell(\lambda)$ vertices whose $i$th vertex has weight $\lambda_i$. The expansion is in terms of \newword{stable set covers}, collections of distinct nonempty stable sets $\mathcal{C}=\{I_1,\dots,I_k\}$ whose union is $V(G)$:
\begin{theorem}[\cite{crew2023kromatic}]
    The expansion of $\ksf{G}$ in the $K$-theoretic basis $\{\kaugm{\lambda}\}$ is $$ \ksf{G}=\sum_{\mathcal{C}\in\textsf{\textup{SSC}}(G)}\kaugm{\lambda(\mathcal{C})}, $$ where $\textsf{\textup{SSC}}(G)$ is the set of all stable set covers of $G$, and $\lambda(\mathcal{C})$ is the partition given by the total weights of the stable sets in $\mathcal{C}$.
\end{theorem}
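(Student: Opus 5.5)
We prove the expansion by regrouping the proper set colorings of $G$ according to a stable set cover. First we record a concrete description of $\kaugm{\lambda}=\ksf{K_\lambda}$. Since $K_\lambda$ is complete, a proper set coloring of $K_\lambda$ assigns pairwise disjoint nonempty finite sets $S_1,\dots,S_\ell$ to its vertices, with $\ell=\ell(\lambda)$. So
$$\kaugm{\lambda}=\sum_{(S_1,\dots,S_\ell)}\prod_{i=1}^{\ell}\prod_{c\in S_i}x_c^{\lambda_i},$$
where the sum runs over ordered tuples of pairwise disjoint nonempty finite subsets of $\mathbb{N}$. Relabeling the vertices by any weight-preserving bijection gives the same function, so $\kaugm{\lambda}$ depends only on the multiset of weights. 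This is what will let us index by $\lambda(\mathcal{C})$.

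Next, fix a proper set coloring $\kappa$ of $G$. For each color $c$ used, let $I_c=\{v\in V(G): c\in\kappa(v)\}$. Each $I_c$ is nonempty, and it is stable because adjacent vertices have disjoint color sets. The sets $I_c$ cover $V(G)$ because every $\kappa(v)$ is nonempty. Different colors may give the same set, so define $\mathcal{C}(\kappa)=\{I_c\}$ as a set of distinct stable sets; this is a stable set cover. For each $I\in\mathcal{C}(\kappa)$ put $S_I=\{c: I_c=I\}$. The $S_I$ are nonempty and pairwise disjoint, and they are finite because only finitely many colors are used.

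Conversely, let $\mathcal{C}=\{I_1,\dots,I_k\}$ be a stable set cover and let $S_1,\dots,S_k$ be pairwise disjoint nonempty finite sets. Define $\kappa(v)=\bigcup_{j: v\in I_j}S_j$. This is nonempty because $\mathcal{C}$ covers $V(G)$. It is proper: if $u$ and $v$ are adjacent, no $I_j$ contains both, so their color sets are unions of disjoint families of the $S_j$ and hence disjoint. Moreover, for $c\in S_j$ we recover $I_c=I_j$, and since the $I_j$ are distinct, $\mathcal{C}(\kappa)=\mathcal{C}$ with $S_{I_j}=S_j$. The two constructions are therefore mutually inverse. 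They give a bijection between proper set colorings of $G$ and pairs $(\mathcal{C},(S_I)_{I\in\mathcal{C}})$.

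Finally, we compare monomials, which is the step needing care with the weights. We have
$$\prod_v\prod_{c\in\kappa(v)}x_c^{w(v)}=\prod_{c}x_c^{\sum_{v\in I_c}w(v)}=\prod_{I\in\mathcal{C}}\prod_{c\in S_I}x_c^{w(I)},$$
where $w(I)$ is the total weight of $I$. Summing over the $S_I$ for a fixed $\mathcal{C}$ gives exactly the formula above for $\kaugm{\lambda(\mathcal{C})}$, using the vertex of $K_{\lambda(\mathcal{C})}$ of weight $w(I)$ for each $I$. Summing over $\mathcal{C}\in\textsf{SSC}(G)$ yields the theorem. The main point to get right is the distinctness in $\mathcal{C}$: merging colors with equal $I_c$ into a single set $S_I$ is what makes the correspondence bijective and avoids overcounting.
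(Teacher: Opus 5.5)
Your proof is correct. It is the direct coloring bijection, which is essentially the argument in the original source \cite{crew2023kromatic}. The one composite you leave implicit, coloring $\to$ pair $\to$ coloring, is immediate, since $\bigcup_{I\ni v}S_I=\{c: v\in I_c\}=\kappa(v)$.

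The paper does not prove the theorem where it states it. Instead it re-derives the expansion in \S\ref{subsec:m} by a Hopf-algebraic route:
\begin{itemize}
\item $\phi(Y(G))=\sum_{W\se V(G)}\ksf{G|_W}$ is grouplike in $\mSym$, with character $I_G(t)$.
\item $\bigodot_{S}(1+\m_{w(S)})$, taken over the nonempty stable sets $S$, is also grouplike, because each $1+\m_n=1+\ol{p}_n$ is grouplike and $\Delta$ is compatible with $\odot$. Under the one-variable specialization it has the same character $1+\sum_S t^{w(S)}=I_G(t)$.
\item By Proposition~\ref{prop:grouplike}(d) the two elements therefore coincide.
\item Expanding the $\odot$-product gives $\sum\m_{\lambda(\mathcal{S})}$ over all finite sets $\mathcal{S}$ of distinct nonempty stable sets.
\item The alternating sum $\ksf{G}=\sum_{W}(-1)^{|V(G)\setminus W|}\phi(Y(G|_W))$ then cuts this down to genuine covers.
\end{itemize}

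Your route is elementary and self-contained, and it works at the level of individual colorings. It handles the two delicate points explicitly: the augmented normalization of $\m_\lambda$ when parts repeat, and the merging of colors with equal $I_c$, which forces the stable sets to be distinct. It also produces covers directly, with no inclusion--exclusion step.

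The paper's route needs Marberg's identification of $G\mapsto\ksf{G}$ as the induced LC-Hopf map, plus the grouplike machinery. In exchange, it explains the formula structurally: $Y_G$ is the $\odot$-product of one grouplike factor per stable set, and this is detected purely by a one-variable character computation. That is what places the $\m$-expansion in the same framework as the $\ol{p}$- and $\ol{p}'$-expansions of \S\ref{subsec:p}.
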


The join product $\odotprod$ has a $K$-theoretic counterpart, which is the version used throughout Section~\ref{sec:monomial}. It acts on the $K$-theoretic augmented monomial basis by $\kaugm{\lambda}\odotprod\kaugm{\mu}:=\kaugm{\lambda\sqcup\mu}$, extended bilinearly, and it satisfies $\ksf{G\odotprod H}=\ksf{G}\odotprod\ksf{H}$ for the graph join $G\odotprod H$, since no stable set of $G\odotprod H$ can meet both sides. Its identity element is $\kaugm{\varnothing}=1$, the basis element indexed by the empty partition.

Many of the formulas below involve $\odotprod$-inverses and negative $\odotprod$-powers, so we note down once and for all the setting in which these make sense. Since the lowest degree part of $\kaugm{\lambda}$ is $\augm{\lambda}$, every element of $\widehat{\Lam}$ is a unique, possibly infinite, linear combination $\sum_\lambda c_\lambda\kaugm{\lambda}$. Then $(\widehat{\Lam},\odotprod)$ is a commutative ring with identity $1$, graded by $|\lambda|$, and an element of $\widehat{\Lam}$ is invertible with respect to $\odotprod$ if and only if its constant term is nonzero: if $g$ has no constant term, then
$$ (1+g)^{\odotprod(-1)}=\sum_{k\ge 0}(-1)^kg^{\odotprod k}, $$
which is a well defined element of $\widehat{\Lam}$ because $g^{\odotprod k}$ has no terms of degree less than $k$. For $n\ge 0$ we write $f^{\odotprod n}$ for the $n$-fold $\odotprod$ product of $f$ with itself, with $f^{\odotprod 0}:=1$, and for $n<0$ we write $f^{\odotprod n}:=(f^{\odotprod(-1)})^{\odotprod(-n)}$, which is defined whenever $f$ has nonzero constant term. More generally, for $a\in\mb{Z}$ and $g$ with no constant term we write
$$ (1+g)^{\odotprod a}:=\sum_{k\ge 0}\binom{a}{k}g^{\odotprod k}, $$
in agreement with the above, and these powers obey the usual exponent rules. Every quotient appearing in Section~\ref{sec:monomial} is an $\odotprod$-quotient of this kind, and in each case the denominator has constant term 1.

It is also convenient to name the generating function that counts stable set covers without insisting on which vertices get covered.

\begin{definition}\label{def:Y}
For a graph $K$, let
$$ Y_K:=\sum_{S\se V(K)}\ksf{K|_S} $$
be the generating function for stable set covers of all induced subgraphs of $K$. For an induced subgraph $K=H|_C$ we abbreviate this as $Y_{H|_C}=\sum_{S\se C}\ksf{H|_S}$. The term coming from $S=\varnothing$ shows that $Y_K$ has constant term 1, so $Y_K$ is invertible with respect to $\odotprod$.
\end{definition}

\section{Recursive monomial expansion formulas}
\label{sec:monomial}

Motivated by the formulas used in \cite{pierson2025graphs} to build families of graphs distinguished by the KSF but not the CSF, in this section we explore what the $\odot$ multiplication lets us do, by deriving recursive formulas for the chromatic and Kromatic symmetric functions of graphs built by gluing smaller subgraphs along a shared intersection.

The setup throughout this section is the following. We take graphs $G_1,\dots,G_d$ that share a common induced subgraph $H$, in the sense that $G_i|_{V(H)}=H$ for all $i$ and $V(G_i)\cap V(G_j)=V(H)$ for all $i\ne j$, and we write $V_i':=V(G_i)\setminus V(H)$ for the vertices of $G_i$ outside the shared subgraph, and $V':=\bigcup_{i=1}^d V_i'$ for all of the vertices outside $H$. The glued graph always has vertex set $\bigcup_{i=1}^d V(G_i)$ and always keeps all of the edges $\bigcup_{i=1}^d E(G_i)$; the two gluings differ in what happens between $V_i'$ and $V_j'$ for $i\ne j$. In \S\ref{subsec:join-type} we add every edge $\{u,v\}$ with $u\in V_i'$ and $v\in V_j'$ for $i\ne j$, so that the vertices outside $H$ coming from different $G_i$ are completely adjacent to one another, and in \S\ref{subsec:disjoint-type} we add no edges between them at all. Each construction is spelled out again in the first theorem where it is used.

The connection between the two invariants throughout this section is the following basic relation, which is well known, so we do not include a proof.
\begin{remark}
\label{rem:ksf_to_csf}
The lowest degree homogeneous component of the Kromatic symmetric function $\ksf{G}$ is the chromatic symmetric function $\csf{G}$.
\end{remark}

Using the above remark, we can simply isolate the terms of degree $|V(G)|$ in an expression for $\ksf{G}$, and what is left over is the corresponding expression for $\csf{G}$.

\subsection{Join-type constructions}\label{subsec:join-type}

We begin with the first of the two constructions, which generalizes the graph join $A\odot B$, where every vertex of $A$ is adjacent to every vertex of $B$. The idea is to understand the chromatic symmetric function of a graph obtained by joining several subgraphs $G_i$ along a common intersection $H$, with all the vertices outside $H$ coming from different $G_i$ made pairwise adjacent. To bring out the logic of the inclusion-exclusion arguments, we first work through the case where the intersection is a single vertex $v$, which was studied in \cite{pierson2025graphs}:

\begin{example} \label{prop:single_vertex_join}
    Let $G_1$ and $G_2$ be graphs such that $V(G_1)\cap V(G_2)=\{v\}$ and let $G$ be the graph formed by the union of $G_1$ and $G_2$ along with all edges connecting $V(G_1)\setminus\{v\}$ to $V(G_2)\setminus\{v\}$. Then $$ \csf{G}=\csf{G_1}\odot\csf{G_2\setminus v}+\csf{G_1\setminus v}\odot\csf{G_2}-\augm{1}\odot\csf{G_1\setminus v}\odot\csf{G_2\setminus v}. $$

    To see this, consider the stable set partitions of $G$. Since every vertex of $V(G_1)\setminus\{v\}$ is adjacent to every vertex of $V(G_2)\setminus\{v\}$, no stable set can reach into both sides at once, so the entire partition is determined by how the shared vertex $v$ is covered. If the stable set containing $v$ uses only vertices of $G_1$ besides $v$ itself, then we are looking at a partition of $G_1$ together with a partition of $G_2\setminus v$, and these are counted by $\csf{G_1}\odot\csf{G_2\setminus v}$. Symmetrically, if that stable set stays inside $G_2$, we get $\csf{G_1\setminus v}\odot\csf{G_2}$.

    Adding these two cases double counts exactly the partitions in which $\{v\}$ is its own singleton stable set, since such a partition looks valid from either side. By inclusion-exclusion we subtract one copy of this overlap: when $\{v\}$ is a singleton, its generating function is $\augm{1}$, and the remaining vertices of $G_1\setminus v$ and $G_2\setminus v$ are partitioned independently, so the term to remove is $\augm{1}\odot\csf{G_1\setminus v}\odot\csf{G_2\setminus v}$. This gives the desired formula.
\end{example}

We now treat the general case, where the intersection $H$ is arbitrary.

\begin{theorem} \label{thm:csf-general}
    Let $G_1,\dots,G_d$ be graphs sharing a common induced subgraph $H$, so that $G_i|_{V(H)}=H$ for all $i$ and $V(G_i)\cap V(G_j)=V(H)$ for all $i\ne j$, and write $V_i':=V(G_i)\setminus V(H)$. Let $G$ be the graph on the vertex set $\bigcup_{i=1}^d V(G_i)$ whose edges are the edges of the $G_i$ together with all pairs $\{u,v\}$ such that $u\in V_i'$ and $v\in V_j'$ for some $i\ne j$. Then, the CSF of $G$ is given by $$ \csf{G}=\sum_{C_0\sqcup\dots\sqcup C_d=V(H)}\left[\left(\sum_{\lambda}(-d+1)^{\ell(\lambda)}\augm{\lambda}\right)\odot\left(\bigodot_{i=1}^d\csf{G_i|_{V_i'\cup C_i}}\right)\right], $$ where the inner sum is taken over all stable set partitions of $H|_{C_0}$, with $\lambda\vdash|C_0|$ being the integer partition corresponding to each such set partition.
\end{theorem}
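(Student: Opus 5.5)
Since the $\augm{}$-coefficients of $\csf{G}$ count stable set partitions by Stanley's theorem, and $\augm{\lambda}\odot\augm{\mu}=\augm{\lambda\sqcup\mu}$, I will show that both sides count the same weighted stable set partitions of $G$, just as in Example~\ref{prop:single_vertex_join}. The first step is structural. Vertices of $V_i'$ and $V_j'$ with $i\ne j$ are adjacent, so every stable set $I$ of $G$ meets at most one $V_i'$. Call a block \emph{of type $i$} if it meets $V_i'$, and \emph{of type $0$} if it lies entirely in $V(H)$. For a stable set partition $\pi$ of $G$, let $D_i(\pi)\se V(H)$ be the set of $H$-vertices lying in blocks of type $i$, for $0\le i\le d$. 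These sets partition $V(H)$. Moreover, $\pi$ is equivalent to the following data: a stable set partition of $H|_{D_0}$, and, for each $i\ge 1$, a stable set partition of $G_i|_{V_i'\cup D_i}$ in which every block meets $V_i'$. This is a bijection, since $G|_{V_i'\cup D_i}=G_i|_{V_i'\cup D_i}$.

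Next I expand the right-hand side combinatorially. For fixed $C_0\sqcup\dots\sqcup C_d=V(H)$, a term of the expression $\bigl(\sum_\lambda(-d+1)^{\ell(\lambda)}\augm{\lambda}\bigr)\odot\bigodot_i\csf{G_i|_{V_i'\cup C_i}}$ corresponds to the following choices:
\begin{itemize}
\item a stable set partition $\sigma_0$ of $H|_{C_0}$, with sign weight $(1-d)^{\#\text{blocks}}$;
\item for each $i$, an arbitrary stable set partition $\sigma_i$ of $G_i|_{V_i'\cup C_i}$.
\end{itemize}
The union of these is a stable set partition $\pi$ of $G$: each $\sigma_i$ is stable in $G$ by the previous step, and the sets are disjoint. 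Its $\augm{}$-index is exactly the $\odot$-product index. So the right-hand side equals $\sum_\pi c(\pi)\augm{\lambda(\pi)}$, where $c(\pi)$ is the signed number of ways to obtain $\pi$ as such a union over all choices of $(C_0,\dots,C_d)$. It then remains to show $c(\pi)=1$ for every $\pi$.

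To compute $c(\pi)$, fix $\pi$. Blocks of $\pi$ of type $i\ge1$ must come from $\sigma_i$, so their $H$-vertices are forced into $C_i$. Each block $B$ of type $0$, i.e.\ $B\se V(H)$, can independently be assigned to $\sigma_0$ with weight $(1-d)$, or to any one of $\sigma_1,\dots,\sigma_d$ with weight $1$ each. This is valid since $B$ is stable in each $G_i$ and assigning it to $\sigma_i$ just means placing $B\se C_i$. The choices are independent across blocks and determine $(C_0,\dots,C_d)$ uniquely. Hence
\[ c(\pi)=\prod_{B\text{ type }0}\bigl((1-d)+d\bigr)=1. \]

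The main obstacle is making the bookkeeping of the second step airtight. I need to confirm that the decomposition of $\pi$ into $(\sigma_0,\dots,\sigma_d)$ is exactly determined by assigning each $H$-only block to one of $d+1$ slots, and that blocks meeting $V_i'$ have no freedom. Everything else is the multiplicativity of $\odot$ on the $\augm{}$ basis. As a check, the case $d=2$ with $V(H)=\{v\}$ recovers Example~\ref{prop:single_vertex_join}, where the $C_0=\{v\}$ term carries coefficient $-1$.
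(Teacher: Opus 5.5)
Your proposal is correct and follows essentially the same route as the paper: both show that every stable set partition of $G$ receives total coefficient $1$. In both, blocks meeting some $V_i'$ are forced into $C_i$, and each block lying inside $V(H)$ independently contributes $(1-d)+d=1$. The paper phrases this last step as the binomial sum $\sum_{k}\binom{m}{k}d^{m-k}(1-d)^k=1$ over assignments $f\colon\mathcal{P}_0\to\{0,\dots,d\}$, whereas you write it directly as a product over blocks.
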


\begin{proof}
Since each vertex of $V_i'$ is adjacent to every vertex of $V_j'$ for $j\ne i$, no stable set can use vertices from two different $V_i'$ at once, so every stable set lies inside $V(H)\cup V_i'$ for a single $i$, or else inside $V(H)$ alone. A stable set partition of $G$ is therefore determined by a partition $C_0\sqcup C_1\sqcup\dots\sqcup C_d$ of $V(H)$, noting which $V_i'$ (if any) each shared vertex's stable set reaches, together with a stable set partition of each $G_i|_{V_i'\cup C_i}$; this is what the formula sums over. The coefficient $(-d+1)^{\ell(\lambda)}$ is then exactly the inclusion-exclusion correction for the fact that a stable set living entirely inside $H$ could just as well have been assigned to any one of the $d$ sets $V_1',\dots,V_d'$.

To prove the formula, it is enough to check that every stable set partition $\mathcal{P}$ of $G$ is counted with total coefficient exactly $1$ on the right-hand side. Each stable set of $\mathcal{P}$ meets at most one of the sets $V_i'$, so $\mathcal{P}$ induces a partition $V(H)=V_0\sqcup V_1\sqcup\dots\sqcup V_d$ of the shared vertices, where for $i\in[d]$ we put $v\in V_i$ if the stable set of $\mathcal{P}$ containing $v$ also uses at least one vertex of $V_i'$, and we put $v\in V_0$ if that stable set lies entirely inside $V(H)$. Write $\mathcal{P}_0$ for the collection of stable sets of $\mathcal{P}$ that lie inside $V_0$.

Now we ask which terms on the right-hand side, indexed by a partition $C_0\sqcup\dots\sqcup C_d=V(H)$, can possibly contribute the monomial of $\mathcal{P}$. Such a term contributes precisely when each vertex lands in its own part, so that $V_i\se C_i$ for $i\in[d]$, and each stable set $S\in\mathcal{P}_0$ lies entirely in a single part $C_k$. Because the placement of each piece of $\mathcal{P}$ can be made independently, the total coefficient factors as a product over the pieces.

For a vertex $v\in V_i$ with $i\in[d]$ there is no choice at all: $v$ must go in $C_i$, contributing a factor of $1$. All the bookkeeping comes from the stable sets in $\mathcal{P}_0$, say $\mathcal{P}_0=\{S_1,\dots,S_m\}$. A contributing term counts, for each $S_k$, which part $C_j$ it was placed in; we encode this by a function $f\colon\mathcal{P}_0\to\{0,1,\dots,d\}$. The sets sent to $C_0$ make up the stable set partition of $H|_{C_0}$, which has $|f^{-1}(0)|$ parts, so that term carries the coefficient $(-d+1)^{|f^{-1}(0)|}$. Summing over all $f$, and organizing by the subset $\mathcal{A}\se\mathcal{P}_0$ of sets that $f$ sends to $C_0$, the remaining $m-|\mathcal{A}|$ sets are each free to go to any of $C_1,\dots,C_d$. Hence the total contribution from $\mathcal{P}_0$ is $$ \sum_{\mathcal{A}\se\mathcal{P}_0}d^{\,m-|\mathcal{A}|}(-d+1)^{|\mathcal{A}|} =\sum_{k=0}^m\binom{m}{k}d^{\,m-k}(-d+1)^k =\bigl(d+(-d+1)\bigr)^m=1^m=1. $$ The binomial theorem is doing exactly the inclusion-exclusion we wanted because the $+d$ counts the ways to assign a set lying inside $H$ to one of the $d$ graphs $G_i$, and the $-(d-1)$ cancels the overcounting from also allowing it in $C_0$. Since every piece of $\mathcal{P}$ contributes a factor of $1$, the monomial of $\mathcal{P}$ appears with coefficient $1$, as needed.
\end{proof}

We now extend these ideas to the Kromatic symmetric function, beginning with a lemma that will feed directly into the main theorem.

\begin{lemma}
    \label{lem:tc_identity}
    Let $G$, $H$ and $V_1',\dots,V_d'$ be as in Theorem~\ref{thm:csf-general}, let $V':=\bigcup_{i=1}^d V_i'$, and let $C\se V(H)$. With $Y_{H|_C}$ as in Definition~\ref{def:Y}, the expression $$ \left(\bigodot_{i=1}^d\frac{\sum_{S\se C}\ksf{G_i|_{V_i'\cup S}}}{Y_{H|_C}}\right)\odot Y_{H|_C} $$ is the generating function for all stable set covers of all induced subgraphs of $G$ on vertex sets of the form $V'\cup S$, where $S\se C$. That is, it equals $\sum_{S\se C}\ksf{G|_{V'\cup S}}$.
\end{lemma}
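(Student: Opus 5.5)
The plan is to show that each numerator factors as an $\odot$-product $T_i\odot Y_{H|_C}$, where $T_i$ is a generating function for "the part of a cover that touches $V_i'$." Then each quotient $\frac{\sum_{S\se C}\ksf{G_i|_{V_i'\cup S}}}{Y_{H|_C}}$ equals $T_i$ exactly. I will then show by the same decomposition that $\sum_{S\se C}\ksf{G|_{V'\cup S}}=\bigl(\bigodot_{i=1}^d T_i\bigr)\odot Y_{H|_C}$, which is the claimed identity. Throughout I use the theorem of \cite{crew2023kromatic}: $\ksf{K}=\sum_{\mathcal{C}\in\textsf{SSC}(K)}\kaugm{\lambda(\mathcal{C})}$. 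I also use the fact that $\kaugm{\lambda}\odot\kaugm{\mu}=\kaugm{\lambda\sqcup\mu}$. Hence if a cover is a disjoint union of subfamilies, its term is the $\odot$-product of the terms of the subfamilies.

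\textbf{Step 1 (description of $Y_{H|_C}$).} A stable set cover of $H|_S$ is exactly a finite family $\mathcal{Q}$ of distinct nonempty stable sets of $H|_C$ whose union is $S$. Summing over $S\se C$ therefore gives
\[ Y_{H|_C}=\sum_{\mathcal{Q}}\kaugm{\lambda(\mathcal{Q})}, \]
summed over all families $\mathcal{Q}$ of distinct nonempty stable sets of $H|_C$, with no covering condition. The empty family contributes the constant term $1$.

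\textbf{Step 2 (definition of $T_i$ and the factorization).} Let $T_i:=\sum_{\mathcal{F}}\kaugm{\lambda(\mathcal{F})}$, summed over families $\mathcal{F}$ of distinct stable sets of $G_i|_{V_i'\cup C}$ that satisfy two conditions: each set in $\mathcal{F}$ meets $V_i'$, and $\bigcup\mathcal{F}\supseteq V_i'$. Take a stable set cover $\mathcal{C}$ of $G_i|_{V_i'\cup S}$ with $S\se C$. It splits uniquely as $\mathcal{C}=\mathcal{F}\sqcup\mathcal{Q}$, where $\mathcal{F}$ consists of the sets meeting $V_i'$ and $\mathcal{Q}$ of the sets contained in $C$. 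Conversely, take any pair $(\mathcal{F},\mathcal{Q})$ of the types above. It is a stable set cover of $G_i|_{V_i'\cup S}$ for the unique set $S=(\bigcup\mathcal{F}\cup\bigcup\mathcal{Q})\cap C$. No set can lie in both families, since one family meets $V_i'$ and the other does not, so distinctness is automatic. This bijection between covers (over all $S$) and pairs gives
\[ \sum_{S\se C}\ksf{G_i|_{V_i'\cup S}}=T_i\odot Y_{H|_C}. \]
Since $Y_{H|_C}$ has constant term $1$, it is $\odot$-invertible, so the $i$th quotient is $T_i$. When $V_i'=\varnothing$ the only admissible family is $\mathcal{F}=\varnothing$, so $T_i=1$, which is consistent.

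\textbf{Step 3 (the glued graph).} Every vertex of $V_i'$ is adjacent to every vertex of $V_j'$ for $j\ne i$. Hence each stable set of $G|_{V'\cup C}$ meets at most one $V_i'$, and the stable sets of $G$ inside $V_i'\cup C$ are exactly those of $G_i|_{V_i'\cup C}$, because $G_i|_{V(H)}=H$ and no edges are added inside $V(G_i)$. A stable set cover of $G|_{V'\cup S}$, summed over $S\se C$, therefore splits uniquely as $\mathcal{F}_1\sqcup\dots\sqcup\mathcal{F}_d\sqcup\mathcal{Q}$. Here $\mathcal{F}_i$ is a family counted by $T_i$, because $V_i'$ must be covered by sets meeting it, and $\mathcal{Q}$ is a family counted by $Y_{H|_C}$. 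The inverse map is as in Step 2. This gives
\[ \sum_{S\se C}\ksf{G|_{V'\cup S}}=\Bigl(\bigodot_{i=1}^d T_i\Bigr)\odot Y_{H|_C}, \]
and combining with Step 2 proves the lemma.

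\textbf{Main obstacle.} The step needing the most care is verifying that these splittings really are bijections and respect the $\odot$-product. This requires checking three things: that the covering condition on $V'$ transfers correctly to each $\mathcal{F}_i$; that distinctness of sets across subfamilies is automatic; and that $\lambda$ of a disjoint union of families is the union $\sqcup$ of the corresponding partitions, so the $\odot$-products match term by term. Cancelling $Y_{H|_C}$ is legitimate by the invertibility discussion in \S\ref{sec:background}.
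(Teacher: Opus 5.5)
Your proposal is correct and follows essentially the same route as the paper. You split each cover into the stable sets meeting $V_i'$ and those lying inside $C$, identify the $i$th quotient as the generating function $T_i$ for the first part, and use the complete adjacency between different $V_i'$ to reassemble $\bigl(\bigodot_i T_i\bigr)\odot Y_{H|_C}$ into $\sum_{S\se C}\ksf{G|_{V'\cup S}}$. Your version is only more explicit than the paper's: it names $T_i$, justifies cancelling $Y_{H|_C}$ by $\odot$-invertibility, and checks that $G|_{V(G_i)}=G_i$.
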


\begin{proof}
    The idea is to read each factor in the expression as a generating function for a piece of a stable set cover, and to check that multiplying the pieces with the join product $\odot$ recombines them into stable set covers of $G$ on vertex sets of the form $V'\cup S$.

    We start with the denominator: by definition $Y_{H|_C}=\sum_{S\se C}\ksf{H|_S}$ is the generating function for stable set covers of every induced subgraph of $H$ whose vertex set lies inside $C$. In the same way, the numerator $\sum_{S\se C}\ksf{G_i|_{V_i'\cup S}}$ is the generating function for stable set covers of the induced subgraphs of $G_i$ on vertex sets $V_i'\cup S$ with $S\se C$.

    Now consider the ratio $\frac{\sum_{S\se C}\ksf{G_i|_{V_i'\cup S}}}{Y_{H|_C}}$. To see what it counts, note that any stable set cover of some $G_i|_{V_i'\cup S}$ splits, in exactly one way, into the stable sets that meet $V_i'$ and the stable sets that lie entirely inside $C$. The first group is a collection $\mathcal{K}$ of stable sets of $G_i$ with $\bigcup_{I\in\mathcal{K}}I\supseteq V_i'$, with $\bigcup_{I\in\mathcal{K}}I\se V_i'\cup C$, and with no $I\in\mathcal{K}$ contained in $C$; the second group is a stable set cover of an induced subgraph of $H$ on a subset of $S$. Summing over the second group rebuilds the factor $Y_{H|_C}$, so dividing it out leaves exactly the generating function for the first group. Hence the ratio enumerates these collections, each of which covers $V_i'$ and uses no stable set contained in $C$.

    Finally, we multiply the $d$ ratios and the one factor of $Y_{H|_C}$ together with $\odot$. A term in the expansion picks one collection from each ratio and one cover from $Y_{H|_C}$ and takes their union. Because of the added edges between the different $V_i'$, every stable set of $G$ lies inside a single $V(G_i)$, so a union of stable sets drawn from the separate $V_i'$ (together with sets inside $H$) is automatically a collection of stable sets of $G$. The collections drawn from the ratios cover all of $V'=\bigcup_i V_i'$, and the $Y_{H|_C}$ factor supplies a cover of some $S\se C$. So the whole expression is the generating function for stable set covers of $G|_{V'\cup S}$ over all $S\se C$, namely $\sum_{S\se C}\ksf{G|_{V'\cup S}}$, as claimed.
\end{proof}

With the lemma in hand, each subset $C\se V(H)$ gives us the generating function for the stable set covers of $G$ that reach the shared vertices only inside $C$, with all of $V'$ covered in every case. What we actually want are the covers reaching all of $V(H)$, and those can be isolated from the others by an alternating sum over $C$:

\begin{theorem} \label{thm:ksf-general}
    Let $G$, $H$ and $V_1',\dots,V_d'$ be as in Theorem~\ref{thm:csf-general}. With $Y_{H|_C}$ as in Definition~\ref{def:Y}, the KSF of $G$ is given by $$ \ksf{G}=\sum_{C\se V(H)}(-1)^{|V(H)\setminus C|}\left[\left(\bigodot_{i=1}^d\frac{\sum_{S\se C}\ksf{G_i|_{V'_i\cup S}}}{Y_{H|_C}}\right)\odot Y_{H|_C}\right]. $$
\end{theorem}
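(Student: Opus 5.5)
By Lemma~\ref{lem:tc_identity}, the bracketed expression indexed by $C$ equals
$$F(C):=\sum_{S\se C}\ksf{G|_{V'\cup S}}.$$
So the right-hand side of the theorem becomes
$$\sum_{C\se V(H)}(-1)^{|V(H)\setminus C|}F(C).$$
The plan is to show that this alternating sum collapses, by M\"obius inversion on the Boolean lattice of subsets of $V(H)$, to the single term $\ksf{G|_{V'\cup V(H)}}=\ksf{G}$. Here we use that $V'\cup V(H)=V(G)$ and that $G|_{V(G)}=G$.

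Concretely, set $g(S):=\ksf{G|_{V'\cup S}}$ for $S\se V(H)$, so that $F(C)=\sum_{S\se C}g(S)$. Substituting and exchanging the order of summation gives
$$\sum_{C\se V(H)}(-1)^{|V(H)\setminus C|}\sum_{S\se C}g(S)=\sum_{S\se V(H)}g(S)\sum_{S\se C\se V(H)}(-1)^{|V(H)\setminus C|}.$$
The inner sum runs over the subsets $C\setminus S$ of $V(H)\setminus S$. It therefore equals $(1-1)^{|V(H)\setminus S|}$, which vanishes unless $S=V(H)$, where it equals $1$. Hence only $g(V(H))=\ksf{G}$ survives.

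Equivalently, one can argue combinatorially. Each stable set cover of some $G|_{V'\cup S}$ covers exactly the shared vertices in $S$. It is counted in $F(C)$ precisely for those $C\supseteq S$, and the signed count over such $C$ is zero unless $S=V(H)$. This is the "isolating covers that reach all of $V(H)$" interpretation stated before the theorem.

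The only points needing care are bookkeeping. The exchange of summation order in $\widehat{\Lam}$ is harmless, since all sums are finite sums of elements of $\widehat{\Lam}$. The $\odot$-quotients inside the bracket are well defined, because each $Y_{H|_C}$ has constant term $1$ as noted in Definition~\ref{def:Y}. All the genuine combinatorial work is already contained in Lemma~\ref{lem:tc_identity}, so the main obstacle is merely making sure that the lemma's output indexes covers by the exact set $S$ of covered shared vertices. That indexing is what makes the inversion valid.
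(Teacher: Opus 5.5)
Your proof is correct and takes essentially the same approach as the paper: both use Lemma~\ref{lem:tc_identity} to rewrite the $C$-th term as $\sum_{S\se C}\ksf{G|_{V'\cup S}}$, and then collapse the alternating sum over $C\supseteq S$ to $(1-1)^{|V(H)\setminus S|}$. The only difference is cosmetic: you do this Boolean-lattice inversion directly on the functions $g(S)$, while the paper tracks the coefficient of each individual stable set cover $\mathcal{C}^*$, which is the same computation.
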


\begin{proof}
    Once Lemma~\ref{lem:tc_identity} is in hand, the proof is a clean inclusion-exclusion: the term indexed by $C$ collects every cover of $V(H)$ that are only within $C$, and the alternating signs remain only for the covers using all of $V(H)$.

    Fix any stable set cover $\mathcal{C}^*$ of an induced subgraph $G|_W$ for some $W\se V(G)$, and let us compute the coefficient of its term $\kaugm{\lambda(\mathcal{C}^*)}$ on the right-hand side. By Lemma~\ref{lem:tc_identity}, the term indexed by $C$ produces $\mathcal{C}^*$ in its expansion exactly when $W=V'\cup S$ for some $S\se C$, which is to say that the set $V_H(\mathcal{C}^*)$ of $H$-vertices that $\mathcal{C}^*$ covers is contained in $C$. So the total coefficient of $\mathcal{C}^*$ is $$ \sum_{C:\,V_H(\mathcal{C}^*)\se C\se V(H)}(-1)^{|V(H)\setminus C|}. $$ To evaluate this, note that choosing $C$ amounts to deciding which of the $|V(H)\setminus V_H(\mathcal{C}^*)|$ remaining $H$-vertices to include, so by the binomial theorem the sum equals $(1-1)^{|V(H)\setminus V_H(\mathcal{C}^*)|}$. This is $1$ when $V_H(\mathcal{C}^*)=V(H)$ and $0$ otherwise. In other words, the only covers that remain are exactly those covering all of $V'$ and all of $V(H)$, hence all of $G$, and each remains with coefficient $1$. That is precisely the generating function $\ksf{G}$.
\end{proof}

A small manipulation of the formula in Theorem~\ref{thm:ksf-general} gives an alternative expression.

\begin{cor} \label{cor:ksf-alternative}
    Let $G$ be as in Theorem~\ref{thm:ksf-general}, with $Y_{H|_C}$ as in Definition~\ref{def:Y}. The KSF of $G$ is also given by $$ \ksf{G}=\sum_{C\se V(H)}(-1)^{|V(H)\setminus C|}\left[\left(\bigodot_{i=1}^d\sum_{S\se C}\ksf{G_i|_{V_i'\cup S}}\right)\odot(Y_{H|_C})^{\odot(-d+1)}\right], $$ where the negative exponent $-d+1$ is interpreted using the $\odot$-inverse of $Y_{H|_C}$.
\end{cor}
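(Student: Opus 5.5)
The plan is to start from Theorem~\ref{thm:ksf-general} and rewrite each bracketed summand using only the ring structure of $(\widehat{\Lam},\odot)$ described in Section~\ref{sec:background}. Fix $C\se V(H)$ and set $N_i:=\sum_{S\se C}\ksf{G_i|_{V_i'\cup S}}$ and $Y:=Y_{H|_C}$. By Definition~\ref{def:Y}, $Y$ has constant term $1$, so it has an $\odot$-inverse $Y^{\odot(-1)}$. The quotient $N_i/Y$ in Theorem~\ref{thm:ksf-general} is by convention the $\odot$-quotient, namely $N_i\odot Y^{\odot(-1)}$.

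The key steps are as follows. Because $\odot$ is commutative and associative, we can regroup the summand as
$$\left(\bigodot_{i=1}^d N_i\odot Y^{\odot(-1)}\right)\odot Y=\left(\bigodot_{i=1}^d N_i\right)\odot \left(Y^{\odot(-1)}\right)^{\odot d}\odot Y.$$
By the definition of negative powers, $(Y^{\odot(-1)})^{\odot d}=Y^{\odot(-d)}$. The exponent rule $Y^{\odot(-d)}\odot Y^{\odot 1}=Y^{\odot(-d+1)}$ holds in any commutative ring for an invertible element, and in particular here. Summing over $C$ with the signs $(-1)^{|V(H)\setminus C|}$ then gives exactly the stated formula.

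The main point to check is that these manipulations are legitimate in $\widehat{\Lam}$ when infinite sums are involved. Specifically, we need $\odot$ to extend continuously and bilinearly to possibly infinite combinations of the $\kaugm{\lambda}$, so that associativity and commutativity survive. This follows from the grading by $|\lambda|$: each graded piece of a product depends on only finitely many terms of the factors. In the edge case $d=1$ the exponent is $0$, so the factor is $1$, and the formula reduces to $\sum_{C}(-1)^{|V(H)\setminus C|}\sum_{S\se C}\ksf{G_1|_{V_1'\cup S}}$. This agrees with the theorem directly, so no separate argument is needed.
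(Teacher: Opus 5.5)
Your proposal is correct and follows the same route as the paper: both regroup the summand of Theorem~\ref{thm:ksf-general} using commutativity and associativity of $\odot$. The $d$ copies of $Y_{H|_C}^{\odot(-1)}$ then combine with the remaining $Y_{H|_C}$ into $Y_{H|_C}^{\odot(-d+1)}$. Your remarks on extending $\odot$ to infinite combinations of the $\kaugm{\lambda}$ are harmless but not needed, since Section~\ref{sec:background} already establishes this, and the $d=1$ case is covered by the same regrouping.
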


\begin{proof}
    This is just a regrouping of the join products inside Theorem~\ref{thm:ksf-general}. The point is that the $d$ copies of $Y_{H|_C}^{\odot(-1)}$, one for each $i$, can be pulled out and combined with the single remaining $Y_{H|_C}$ into one power. Using the fact that $\odot$ is commutative and associative, we can rewrite the summand of Theorem~\ref{thm:ksf-general} as
\begin{align*}
\left(\bigodot_{i=1}^d\frac{\sum_{S\se C}\ksf{G_i|_{V_i'\cup S}}}{Y_{H|_C}}\right)\odot Y_{H|_C} &=\left(\bigodot_{i=1}^d\left(\sum_{S\se C}\ksf{G_i|_{V_i'\cup S}}\right)\right) \odot\left(\bigodot_{i=1}^d Y_{H|_C}^{\odot(-1)}\right)\odot Y_{H|_C}\\ &=\left(\bigodot_{i=1}^d\sum_{S\se C}\ksf{G_i|_{V_i'\cup S}}\right) \odot Y_{H|_C}^{\odot(-d)}\odot Y_{H|_C}^{\odot(1)}\\ &=\left(\bigodot_{i=1}^d\sum_{S\se C}\ksf{G_i|_{V_i'\cup S}}\right) \odot Y_{H|_C}^{\odot(-d+1)}.
\end{align*}
Substituting this back into the sum over $C$ gives the stated formula.
\end{proof}


We give an example now to show what the formula looks like in the simplest case, where the shared subgraph is a single vertex, and to illustrate how the same answer can be reached directly. This is the Kromatic analogue of Example~\ref{prop:single_vertex_join}.

\begin{example}
    Let $G_1$ and $G_2$ be graphs such that $V(G_1) \cap V(G_2) = \{v\}$, and let $G$ be the graph formed by the union of $G_1$ and $G_2$ along with all edges connecting $V(G_1)\setminus\{v\}$ to $V(G_2)\setminus\{v\}$. Then $$ \ksf{G} = \left(\ksf{G_1} + \ksf{G_1 \setminus v}\right) \odot \left(\ksf{G_2} + \ksf{G_2 \setminus v}\right) \odot (1+\kaugm{1})^{\odot (-1)} - \ksf{G_1 \setminus v} \odot \ksf{G_2 \setminus v}. $$ It can also be derived directly by inclusion-exclusion on how $v$ is covered. Let $f(v, G_i)$ be the generating function for stable set covers of $G_i$ in which $v$ is not covered as a singleton $\{v\}$. We have $f(v, G_i) = (\ksf{G_i} - \ksf{G_i \setminus v} \odot \kaugm{1}) \odot (1+\kaugm{1})^{\odot (-1)}$, and rearranging gives $$ f(v,G_i)+\ksf{G_i \setminus v}=(\ksf{G_i} + \ksf{G_i \setminus v}) \odot (1+\kaugm{1})^{\odot (-1)}, $$ which is the generating function for the collections of stable sets of $G_i$ that do not use the singleton $\{v\}$ and cover every vertex of $G_i$ except possibly $v$. Since every vertex of $V(G_1)\setminus\{v\}$ is adjacent to every vertex of $V(G_2)\setminus\{v\}$, no stable set of $G$ other than $\{v\}$ can reach into both sides at once, so taking the $\odot$-product of these components for $G_1$ and $G_2$ and multiplying by $(1+\kaugm{1})$ to account for the singleton $\{v\}$ builds each stable set cover of $G$ exactly once, along with the collections that leave $v$ uncovered. Those are the ones where no stable set contains $v$ and the singleton is not used, which are counted by $\ksf{G_1 \setminus v} \odot \ksf{G_2 \setminus v}$, so subtracting them gives the desired result.
\end{example}

\subsection{Disjoint-type constructions}\label{subsec:disjoint-type}

We now turn to the second construction. Here the graphs $G_1,\dots,G_d$ are again identified along the common subgraph $H$, but no extra edges are added between the sets $V_i'$. Unlike the formulas of \S\ref{subsec:join-type}, the right-hand sides of Theorems~\ref{thm:csf-disjoint} and~\ref{thm:ksf-disjoint} below contain $\csf{G}$ or $\ksf{G}$ itself with coefficient 1 (from the term with $U=S=\varnothing$, or from the terms with $W=C=V'$ and $S=V(G)$), so they are identities saying that all the other terms add up to 0, rather than recursive formulas. In them, $\prod$ denotes the ordinary product, while $\odot$ and $\odot$-inverses are as in \S\ref{sec:background}.

\begin{theorem} \label{thm:csf-disjoint}
    Let $G_1,\dots,G_d$ be graphs sharing a common induced subgraph $H$, so that $G_i|_{V(H)}=H$ for all $i$ and $V(G_i)\cap V(G_j)=V(H)$ for all $i\ne j$, and write $V_i':=V(G_i)\setminus V(H)$ and $V':=\bigcup_{i=1}^d V_i'$. Let $G$ be the graph on the vertex set $\bigcup_{i=1}^d V(G_i)$ whose edges are exactly the edges of the $G_i$, so that no edge joins $V_i'$ to $V_j'$ for $i\ne j$. The chromatic symmetric function of $G$ is given by
    \begin{equation}
        \csf{G}=\sum_{U\se V'}\left[\left(\prod_{i=1}^d\csf{G|_{U\cap V_i'}}\right)\odot\left(\sum_{S\se V'\setminus U}\left(\sum_{\lambda}(-1)^{\ell(\lambda)}\augm{\lambda}\right)\odot\csf{G|_{V(H)\cup(V'\setminus(U\cup S))}}\right)\right],
    \end{equation}
    where the innermost sum is taken over all stable set partitions of $G|_S$, with $\lambda\vdash|S|$ being the integer partition corresponding to each such set partition. Stable set partitions with the same block sizes are counted separately, and for $S=\varnothing$ the sum has value $1$.
\end{theorem}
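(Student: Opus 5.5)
The plan is to prove the identity at the level of stable set partitions, in the same spirit as the proof of Theorem~\ref{thm:csf-general}. By Stanley's theorem, both sides are sums of terms $\augm{\lambda(\mathcal P)}$ over stable set partitions $\mathcal P$, so it is enough to expand the right-hand side as a signed sum of such terms and show that every stable set partition $\mathcal P$ of $G$ receives total coefficient exactly $1$.

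\textbf{Step 1: rewrite the $U$-factor.} Since no edge joins $V_i'$ to $V_j'$ for $i\ne j$, the induced subgraph $G|_U$ is the disjoint union of the graphs $G|_{U\cap V_i'}$. Hence
$$\prod_{i=1}^d\csf{G|_{U\cap V_i'}}=\csf{G|_U},$$
and this is the generating function for stable set partitions of $G|_U$. Its blocks are arbitrary stable sets of $G$ contained in $U$, and they may meet several $V_i'$.

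\textbf{Step 2: read each summand as a sum over triples.} Use the defining property of Tsujie's product on the augmented monomial basis, $\augm{\lambda}\odot\augm{\mu}=\augm{\lambda\sqcup\mu}$. With it, the $\odot$-product of generating functions on disjoint vertex sets is the generating function for pairs of partitions, recorded by the union of their blocks. Put $R:=V(H)\cup(V'\setminus(U\cup S))$. Then the summand indexed by $(U,S)$ equals
$$\sum (-1)^{|\mathcal P_S|}\,\augm{\lambda(\mathcal P_U\cup\mathcal P_S\cup\mathcal P_R)},$$
where the sum runs over triples $(\mathcal P_U,\mathcal P_S,\mathcal P_R)$ of stable set partitions of $G|_U$, $G|_S$ and $G|_R$ respectively. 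Here I will use the stated convention that stable set partitions with the same block sizes are counted separately, so the sign really is $(-1)^{\ell(\lambda)}$ per set partition. Since $U$, $S$ and $R$ partition $V(G)$, each triple glues to a stable set partition $\mathcal P$ of $G$.

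\textbf{Step 3: reverse the correspondence.} Given $\mathcal P$, a contributing pair $(U,S)$ together with its triple is the same thing as a labelling of each block $B\in\mathcal P$ by one of the letters $U$, $S$, $R$, subject to two rules.
\begin{itemize}
\item A block meeting $V(H)$ is forced to carry the label $R$, because $V(H)\se R$ and $U,S\se V'$.
\item A block contained in $V'$ may carry any of the three labels.
\end{itemize}
Conversely, any such labelling determines $U$ and $S$ as the unions of the blocks labelled $U$ and $S$, and it determines the triple. The weights factor over the blocks: a forced block contributes $1$, and a free block contributes $1+(-1)+1=1$. So the coefficient of $\mathcal P$ is $1$, which gives $\csf{G}$. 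In particular, the $U=S=\varnothing$ term contributes $\csf{G}$ itself, and all the remaining terms cancel, as the paper remarks.

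\textbf{Main obstacle.} The delicate point is Step 3: checking that the labelling really is a bijection with the index set. I need to confirm three things.
\begin{itemize}
\item Blocks labelled $U$ or $S$ are arbitrary stable subsets of $V'$, with no restriction to a single $V_i'$; Step 1 guarantees this.
\item Every stable set of $G$ is still stable in whichever induced subgraph it is assigned to.
\item No multiplicity is lost when passing from $\augm{\lambda}$ coefficients to set partitions.
\end{itemize}
The last check is exactly why I will phrase everything in terms of set partitions rather than integer partitions.
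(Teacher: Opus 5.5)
Your proposal is correct. It uses the same underlying expansion as the paper: the identity $\prod_i X_{G|_{U\cap V_i'}}=X_{G|_U}$, followed by writing each summand as a signed sum over triples $(\mathcal P_U,\mathcal P_S,\mathcal P_R)$ that glue to a stable set partition of $G$. Where you differ is in how the cancellation is organized.

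The paper fixes $U$ first. It then runs a sign-reversing involution that toggles one fixed block contained in $V'\setminus U$ between the $S$-part and the remaining chromatic symmetric function. This shows that, for each fixed $U$, the inner sum over $S$ is exactly the generating function for stable set partitions of $G|_{V(H)\cup(V'\setminus U)}$ in which every block meets $V(H)$. The outer sum over $U$ is then a disjoint decomposition according to the canonical $U$, namely the union of the blocks avoiding $V(H)$.

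You instead let every block contained in $V'$ choose its label freely. You then compute the total weight as a product $(1+(-1)+1)^k=1$, which is the same binomial-theorem bookkeeping the paper uses in its proof of Theorem~\ref{thm:csf-general}. Your version is shorter. It also makes plain that the identity is nearly tautological: the all-$R$ labelling is the $U=S=\varnothing$ term $X_G$, and the remaining labellings contribute $(1-1+1)^k-1=0$.

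The paper's version proves a finer statement: each $U$-summand separately counts the partitions whose blocks avoiding $V(H)$ have union exactly $U$. That refined decomposition is the one that carries over to the Kromatic analogue, Theorem~\ref{thm:ksf-disjoint}, where covers are split according to which stable sets meet $V(H)$.
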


\begin{proof}
    We sort the vertices of $V'$ according to whether their stable set stays inside $V'$ or meets the shared subgraph $H$. Fix $U\se V'$, the set of vertices of the first kind. No stable set of such a partition meets both $U$ and $V(H)\cup(V'\setminus U)$, so the contributions of the two parts combine under $\odot$.

    For the first part, because the sets $V_i'$ are pairwise disjoint with no edges between them, $G|_U$ is just the disjoint union of the $G|_{U\cap V_i'}$. The CSF of a disjoint union is the product of the CSFs, so this part contributes $\prod_{i=1}^d\csf{G|_{U\cap V_i'}}$.

    For the second part, we want exactly the stable set partitions of $G|_{V(H)\cup(V'\setminus U)}$ in which every stable set meets $V(H)$. By inclusion-exclusion over the stable sets contained in $V'\setminus U$, their generating function is
    \begin{equation*}
        \sum_{S\se V'\setminus U}\left(\sum_{\lambda}(-1)^{\ell(\lambda)}\augm{\lambda}\right)\odot\csf{G|_{V(H)\cup(V'\setminus(U\cup S))}}.
    \end{equation*}
    To see this, fix a stable set partition of $G|_{V(H)\cup(V'\setminus U)}$. Each term producing this partition chooses some of its stable sets contained in $V'\setminus U$, takes $S$ to be their union, and leaves all other stable sets in the remaining CSF. The chosen stable sets form the stable set partition of $G|_S$ in the innermost sum, so each chosen stable set contributes a factor of $-1$.

    If the fixed partition has a stable set contained in $V'\setminus U$, choose one such stable set. Pair each choice of stable sets with the choice obtained by adding or removing this stable set. The two choices produce the same stable set partition with opposite signs, so their contributions cancel. If the fixed partition has no stable set contained in $V'\setminus U$, only the empty choice contributes, with coefficient $1$. Thus the alternating sum leaves exactly the partitions in which every stable set meets $V(H)$.

    Joining the two parts with $\odot$ therefore counts exactly the stable set partitions of $G$ for which $U$ is the union of the stable sets disjoint from $V(H)$. Every stable set partition of $G$ determines a unique such $U$, so summing over all $U\se V'$ gives the stated formula.
\end{proof}

We now solve the Kromatic analogue of the same problem, starting with two preparatory lemmas.

\begin{lemma}
    \label{lem:disjoint_factor1}
    Let $G$, $V_1',\dots,V_d'$ and $V'$ be as in Theorem~\ref{thm:csf-disjoint}, and let $W\subseteq V'$. The generating function for stable set covers of the vertices in $V'\setminus W$ by stable sets contained entirely within $V'$ is given by $$ \prod_{i=1}^d\sum_{V_i'\setminus W\subseteq S\subseteq V_i'}\ksf{G|_S}. $$
\end{lemma}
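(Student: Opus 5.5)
The plan is to interpret both sides as generating functions for collections of distinct nonempty stable sets, weighted by $\kaugm{\lambda}$. The left side should be read as the generating function over all collections $\mathcal{K}$ of distinct nonempty stable sets of $G$ such that each set is contained in $V'$, every vertex of $V'\setminus W$ is covered, and vertices of $W$ may or may not be covered. Each such $\mathcal{K}$ contributes $\kaugm{\lambda(\mathcal{K})}$. The first step is to observe that, since no edge of $G$ joins $V_i'$ to $V_j'$ for $i\ne j$ and $V(H)$ is excluded, $G|_{V'}$ is the disjoint union $\bigsqcup_{i=1}^d G|_{V_i'}$.

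Next, I would split the product on the right into its individual factors. For each $i$, a collection $\mathcal{K}$ as above determines the subset $S_i:=\bigcup_{I\in\mathcal{K}} I\cap V_i'$, the vertices of $V_i'$ it actually covers. The covering condition forces $V_i'\setminus W\se S_i\se V_i'$. Conversely, summing $\ksf{G|_S}$ over $V_i'\setminus W\se S\se V_i'$ enumerates, by the stable set cover expansion of the KSF from \cite{crew2023kromatic}, exactly the collections of distinct stable sets of $G|_{V_i'}$ whose union covers $V_i'\setminus W$. Each $S$ arises exactly once, as the union.

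The main point, and the only real obstacle, is that a stable set in $V'$ can meet several $V_i'$ at once, because there are no edges between different $V_i'$. So $\mathcal{K}$ does not simply split into per-$i$ collections, and the product must be the ordinary product, not $\odot$. I would handle this exactly as in the proof that $\ksf{G\sqcup H}=\ksf{G}\cdot\ksf{H}$ in \cite{crew2023kromatic}. Realize $\ksf{G|_{S_1}}\cdots\ksf{G|_{S_d}}$ as the sum over proper set colorings of $G|_{S_1\sqcup\dots\sqcup S_d}$, since set colorings of a disjoint union are independent per component. Then regroup by color classes: the vertices receiving color $j$ form a stable set, and the distinct nonempty color classes form a stable set cover of $G|_{S_1\sqcup\cdots\sqcup S_d}$, which is the disjoint union. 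Hence
\[
\prod_{i=1}^d \ksf{G|_{S_i}}=\ksf{G|_{S_1\sqcup\dots\sqcup S_d}}=\sum_{\mathcal{C}\in\textsf{\textup{SSC}}(G|_{S_1\sqcup\cdots\sqcup S_d})}\kaugm{\lambda(\mathcal{C})}.
\]

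Finally, I would expand the product of sums as a sum over tuples $(S_1,\dots,S_d)$ and apply the identity above. The map $(S_1,\dots,S_d)\mapsto S=S_1\sqcup\dots\sqcup S_d$ is a bijection onto the subsets $S$ with $V'\setminus W\se S\se V'$. Therefore the right side equals $\sum_{S}\ksf{G|_S}$ over these $S$, which is precisely the generating function for stable set covers, by stable sets inside $V'$, of sets containing $V'\setminus W$. That matches the stated interpretation, with uncovered vertices of $W$ allowed.
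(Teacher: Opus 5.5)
Your proposal is correct and follows essentially the same route as the paper. Both arguments use that $G|_S$ is the disjoint union of the $G|_{S\cap V_i'}$, so the KSF factors as an ordinary product; both expand the product of sums into a single sum over $V'\setminus W\subseteq S\subseteq V'$; and both identify that sum, via the stable set cover expansion, with the collections of stable sets inside $V'$ that cover $V'\setminus W$. Your remark that stable sets may straddle several $V_i'$ (which is why the ordinary product rather than $\odot$ is required), and your set-coloring justification of $\ksf{G\sqcup H}=\ksf{G}\ksf{H}$, only make explicit a fact the paper uses without comment.
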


\begin{proof}
    This again uses the disjointness of the sets $V_i'$, but at the level of the product rather than of the individual stable sets. Because there are no edges between the different $V_i'$, for any $S\se V'$ the graph $G|_S$ is the disjoint union of the graphs $G|_{S\cap V_i'}$, and the KSF of a disjoint union is the product of the KSFs. Moreover, the sets $S$ with $V'\setminus W\se S\se V'$ are exactly those obtained by choosing each $S\cap V_i'$ independently with $V_i'\setminus W\se S\cap V_i'\se V_i'$. Expanding the product therefore gives $$ \prod_{i=1}^d\sum_{V_i'\setminus W\se S\se V_i'}\ksf{G|_S}=\sum_{V'\setminus W\se S\se V'}\ksf{G|_S}. $$

    It remains to identify the right hand side. A collection of stable sets contained in $V'$ that covers $V'\setminus W$ is the same thing as a stable set cover of $G|_S$, where $S$ is the set of vertices the collection covers, and the sets $S$ that can arise this way are exactly those with $V'\setminus W\se S\se V'$. So the right hand side is the stated generating function.
\end{proof}

That takes care of the stable sets that stay inside $V'$. Next we count the covers on the other side, where the condition to impose is that every stable set used reaches into the shared subgraph:

\begin{lemma}
    \label{lem:disjoint_factor2}
    Let $G$, $H$, $V_1',\dots,V_d'$ and $V'$ be as in Theorem~\ref{thm:csf-disjoint}, and let $W\subseteq V'$. The generating function for stable set covers of $V(H)\cup W$ in which every stable set intersects $V(H)$ is given by $$ \sum_{C\subseteq W}(-1)^{|W\setminus C|}\left(\sum_{V(H)\subseteq S\subseteq V(H)\cup C}\ksf{G|_S}\right)\odot Y_{G|_C}^{\odot(-1)}. $$
\end{lemma}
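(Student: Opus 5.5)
We argue in two stages, mirroring the proof of Lemma~\ref{lem:tc_identity} and then that of Theorem~\ref{thm:ksf-general}. First fix $C\se W$ and interpret the summand
$$ \left(\sum_{V(H)\se S\se V(H)\cup C}\ksf{G|_S}\right)\odot Y_{G|_C}^{\odot(-1)}. $$
The first factor is the generating function for stable set covers of $G|_S$ over all $S$ with $V(H)\se S\se V(H)\cup C$. Such a collection $\mathcal{K}$ consists of distinct stable sets of $G|_{V(H)\cup C}$ covering $V(H)$. It splits uniquely as $\mathcal{K}=\mathcal{K}_1\sqcup\mathcal{K}_2$, where $\mathcal{K}_1$ contains the sets meeting $V(H)$ and $\mathcal{K}_2$ the sets contained in $C$. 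The union of $\mathcal{K}_2$ is some $T\se C$, so $\mathcal{K}_2$ is a stable set cover of $G|_T$. Conversely, any $\mathcal{K}_1$ (a family of stable sets meeting $V(H)$, inside $V(H)\cup C$, covering $V(H)$) and any stable set cover $\mathcal{K}_2$ of some $G|_T$ with $T\se C$ combine into a valid $\mathcal{K}$. The two families are automatically disjoint, so the sets stay distinct.

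Because the $\kaugm{}$-weight of the union is the $\sqcup$ of the weights, this bijection gives a factorization $\mathcal{F}_1(C)\odot Y_{G|_C}$. Here $\mathcal{F}_1(C)$ is the generating function for families $\mathcal{K}_1$ as above, and we use Definition~\ref{def:Y} for $Y_{G|_C}$. Since $Y_{G|_C}$ has constant term $1$ and is $\odot$-invertible, the summand equals $\mathcal{F}_1(C)$. So the summand counts collections of stable sets in $V(H)\cup C$, each meeting $V(H)$, which cover $V(H)$ and some subset of $C$.

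Second, we run inclusion–exclusion over $C$. Fix a collection $\mathcal{K}_1$ of distinct stable sets, each meeting $V(H)$, covering $V(H)$, with union $V(H)\cup C^*$ and $C^*\se W$. It appears in the summand for $C$ exactly when $C^*\se C\se W$, so its total coefficient is
$$ \sum_{C^*\se C\se W}(-1)^{|W\setminus C|}=(1-1)^{|W\setminus C^*|}. $$
This is $1$ if $C^*=W$ and $0$ otherwise. What survives is exactly the stable set covers of $G|_{V(H)\cup W}$ in which every stable set meets $V(H)$, which is the claim.

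The main obstacle is the factorization step. We must check that the splitting into $\mathcal{K}_1$ and $\mathcal{K}_2$ is a genuine bijection of weighted collections. In particular, no constraint couples the two parts: stable sets inside $C$ may overlap those in $\mathcal{K}_1$ freely, since covers allow overlapping sets. We must also check that $\odot$-multiplication of generating functions in the $\kaugm{}$ basis corresponds to disjoint union of such collections, which follows from $\kaugm{\lambda}\odot\kaugm{\mu}=\kaugm{\lambda\sqcup\mu}$ and the stable-set-cover expansion of $\ksf{}$ (the second theorem of \S\ref{sec:background}). With that in place, dividing out $Y_{G|_C}$ is legitimate in $(\widehat{\Lam},\odot)$.
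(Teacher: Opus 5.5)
Your proposal is correct and follows the paper's proof. Like the paper, you factor each $C$-summand as $Y_{G|_C}\odot(\text{collections of stable sets meeting }V(H)\text{ that cover }V(H)\text{ inside }V(H)\cup C)$, cancel $Y_{G|_C}$, and then use inclusion–exclusion over $C\subseteq W$ to keep only the collections covering all of $W$. You also spell out two steps the paper leaves implicit: the bijection check (including that the two families are automatically distinct) and the evaluation $(1-1)^{|W\setminus C^*|}$.
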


\begin{proof}
   The trick is to use the ``division'' to discard the stable sets lying entirely outside $V(H)$, and then to impose, by inclusion-exclusion on $W$, the condition that all of $W$ gets covered.

   For each $C\se W$, observe the covers of induced subgraphs $G|_S$ with $V(H)\se S\se V(H)\cup C$, all of which cover $V(H)$. Any such cover factors uniquely into the stable sets that lie inside $C$ and the stable sets that meet $V(H)$, so summing over all admissible $S$, the value $\sum_{V(H)\se S\se V(H)\cup C}\ksf{G|_S}$ is $Y_{G|_C}$ join-producted with the generating function for the collections whose stable sets all meet $V(H)$ and which cover $V(H)$ together with some subset of $C$, and dividing out $Y_{G|_C}$ isolates that second factor. Finally, such a collection is counted in the term indexed by $C$ exactly when $C$ contains the vertices of $W$ that the collection covers, so the alternating sum over $C\se W$ removes the collections that fail to cover all of $W$, which yields the claimed formula.
\end{proof}

Sorting the stable sets of a cover of $G$ by whether or not they meet the shared subgraph, the two lemmas count the two halves of the cover independently, so we can join them together and sum over the choices of $W$:

\begin{theorem} \label{thm:ksf-disjoint}
    Let $G$, $H$, $V_1',\dots,V_d'$ and $V'$ be as in Theorem~\ref{thm:csf-disjoint}. With $Y_K$ as in Definition~\ref{def:Y}, the Kromatic symmetric function of $G$ is given by $$ \ksf{G}=\sum_{W\subseteq V'}\left[\left(\prod_{i=1}^d\sum_{V_i'\setminus W\subseteq S\subseteq V_i'}\ksf{G|_S}\right)\odot\left(\sum_{C\subseteq W}(-1)^{|W\setminus C|}\left(\sum_{V(H)\subseteq S\subseteq V(H)\cup C}\ksf{G|_S}\right)\odot Y_{G|_C}^{\odot(-1)}\right)\right]. $$
\end{theorem}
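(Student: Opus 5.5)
The plan is to combine Lemma~\ref{lem:disjoint_factor1} and Lemma~\ref{lem:disjoint_factor2} by sorting the stable sets of a cover of $G$ according to whether they meet $V(H)$. I will show that every stable set cover $\mathcal{C}$ of $G$ is counted exactly once on the right-hand side, with weight $\kaugm{\lambda(\mathcal{C})}$, and that nothing else is counted.

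Fix a stable set cover $\mathcal{C}$ of $G$. Split it uniquely as $\mathcal{C}=\mathcal{C}_1\sqcup\mathcal{C}_2$, where $\mathcal{C}_1$ consists of the stable sets contained in $V'$ and $\mathcal{C}_2$ of those meeting $V(H)$. Let $W\se V'$ be the set of vertices of $V'$ covered by $\mathcal{C}_2$. Then $\mathcal{C}_2$ is a collection of stable sets, each meeting $V(H)$, whose union is exactly $V(H)\cup W$; here $V(H)$ is covered because every vertex of $H$ lies in some set of $\mathcal{C}$, and such a set is not contained in $V'$. The vertices of $V'\setminus W$ are covered by $\mathcal{C}_1$, whose sets lie in $V'$ and may also cover vertices of $W$.

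So $\mathcal{C}_1$ is exactly an object counted by Lemma~\ref{lem:disjoint_factor1} for this $W$, and $\mathcal{C}_2$ is exactly an object counted by Lemma~\ref{lem:disjoint_factor2} for this $W$. Conversely, fix $W$, a collection $\mathcal{C}_1$ counted by the first lemma, and a collection $\mathcal{C}_2$ counted by the second. Their union is a collection of stable sets of $G$ covering $V'\setminus W$, $W$ and $V(H)$, hence a stable set cover of $G$. The sets in $\mathcal{C}_1$ avoid $V(H)$ and those in $\mathcal{C}_2$ meet it, so no set appears twice. Moreover, the $W$ recovered from the union is the original $W$, since $\mathcal{C}_2$ covers precisely $V(H)\cup W$. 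This gives a bijection between stable set covers of $G$ and triples $(W,\mathcal{C}_1,\mathcal{C}_2)$ of this kind.

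Finally, I will check the weights. By the definition of $\odot$ on the $\kaugm{\lambda}$ basis, the $\odot$-product of the generating functions for $\mathcal{C}_1$ and $\mathcal{C}_2$ gives the weight $\kaugm{\lambda(\mathcal{C}_1)\sqcup\lambda(\mathcal{C}_2)}=\kaugm{\lambda(\mathcal{C})}$. Summing over $W\se V'$ then yields $\ksf{G}$.

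The main subtlety is disjointness when the pieces are recombined. It must be genuinely impossible for a set to be double counted, and the $\odot$-product must correspond to a union of distinct sets rather than a multiset. Both hold because membership in $\mathcal{C}_1$ versus $\mathcal{C}_2$ is decided by whether the set meets $V(H)$. I also need $W$ to be uniquely determined by $\mathcal{C}$, so that summing over $W$ does not overcount. This works because Lemma~\ref{lem:disjoint_factor2} requires $\mathcal{C}_2$ to cover exactly $V(H)\cup W$, not merely to contain it.
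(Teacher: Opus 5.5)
Your proposal is correct and follows essentially the same route as the paper. You split each cover into its sets contained in $V'$ and its sets meeting $V(H)$, and let $W$ be the part of $V'$ covered by the latter. You then identify the two pieces with the objects counted by Lemmas~\ref{lem:disjoint_factor1} and~\ref{lem:disjoint_factor2}, recombine them with $\odot$, and sum over $W$. Your remarks that $W$ is recovered because the second collection covers exactly $V(H)\cup W$, and that the two collections cannot share a set, are the same points the paper's proof relies on.
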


\begin{proof}
    This mirrors the CSF proof, except that we split the stable sets of a cover rather than the vertices. Fix a stable set cover of $G$, and let $W\se V'$ be the set of vertices outside $H$ that belong to stable sets of the cover meeting $V(H)$. Every stable set of the cover either meets $V(H)$ or is contained in $V'$, so the cover splits in exactly one way into the collection of its stable sets meeting $V(H)$, which is a stable set cover of $V(H)\cup W$ in which every stable set meets $V(H)$, and the collection of its stable sets contained in $V'$, which must cover $V'\setminus W$ since no other stable set of the cover reaches those vertices. Conversely, for a fixed $W\se V'$, joining a collection of the first kind with a collection of the second kind gives a stable set cover of $G$, and both $W$ and the two collections can be read back off from it, since a stable set belongs to the first collection exactly when it meets $V(H)$. The two kinds of collections are exactly what Lemmas~\ref{lem:disjoint_factor2} and~\ref{lem:disjoint_factor1} enumerate, so the generating function for a fixed $W$ is the $\odot$ product of the two parts. Summing over all $W\se V'$ then enumerates every stable set cover of $G$ once, which is $\ksf{G}$.
\end{proof}

\section{An inclusion-exclusion based power sum expansion}\label{sec:power-sum}


Stanley gives two expansions of $\csf{G}$ in the classical power sum basis $\{p_\lam\}$. The first is an inclusion-exclusion formula over the edge set of $G=(V,E)$ \cite[Theorem 2.5]{stanley1995symmetric}: $$ \csf{G}=\sum_{S \se E} (-1)^{|S|} p_{\lam(S)}, $$ where $\lam(S)$ is the partition of $|V|$ given by the sizes of the connected components of $G_S$. The second is a cancellation-free expansion over the graph's bond lattice \cite[Theorem 2.6]{stanley1995symmetric}. A partition $\pi$ of $V$ is a \newword{connected partition} if each of its blocks induces a connected subgraph of $G$, and the connected partitions form a lattice $L_G$ called the \newword{bond lattice} or \newword{lattice of contractions}. Writing $\lam(\pi)$ for the partition whose parts are the block sizes of $\pi$, Stanley's second expansion is $$ \csf{G}=\sum_{\pi \in L_G} \mu(\hat{0}, \pi) p_{\lam(\pi)}, $$ where $\mu$ is the Möbius function of $L_G$ and $\hat{0}$ denotes the minimum element of $L_G$, namely the partition of $V$ into singleton blocks.

We now give a $K$-analogue of these expansions, addressing a question from \cite[Remark 3.5]{crew2023kromatic} about finding an inclusion-exclusion version of the power sum expansion for the KSF. An expansion of $\ksf{G}$ in the $K$-theoretic power sum basis $\{\ol{p}_\lam\}$ already exists \cite{pierson2025lyndon}; our point here is that an inclusion-exclusion description of the coefficients explains why they recover Stanley's bond lattice expansion in the lowest degree.

Recall that the classical \newword{power sum symmetric functions} form a multiplicative basis for $\Lam$, with $p_\lam:=p_{\lam_1}\cdots p_{\lam_{\ell(\lam)}}$ and $p_n:=\sum_{i\ge 1}x_i^n$. The \newword{$K$-theoretic power sum basis} $\{\ol{p}_\lam\}$ is also multiplicative, with $\ol{p}_\lam:=\ol{p}_{\lam_1}\cdots\ol{p}_{\lam_{\ell(\lam)}}$, where $\ol{p}_k$ is defined by $1+\ol{p}_k=\prod_{i\ge 1}(1+x_i^k)$ \cite{crew2023kromatic}.

The terms of our expansion are indexed by the following method. We consider finite sequences $\mathbf{V}=(V_1,\dots,V_l)$ of nonempty multisets of vertices of $G$ such that every vertex of $G$ lies in at least one $V_i$ and the vertices of each $V_i$ induce a connected subgraph of $G$, so that each such $\mathbf{V}$ covers $G$ by connected pieces. Only the multiset of parts will matter, so we always list the parts in a canonical order, first by decreasing multiset size and then lexicographically with respect to a fixed ordering of the vertices, and we identify two sequences that agree after this reordering. The symmetric function contributed by such a cover is the product $\ol{p}_{|V_1|}\cdots\ol{p}_{|V_l|}$, and we measure the total overlap among its parts by the quantity $$ \sum_{i=1}^l(|V_i|-1), $$ which is 0 exactly when every $V_i$ consists of a single vertex. The total overlap is what lets us order these covers so that the coefficients can be computed one at a time, since the covers we will need to already know about when computing a given coefficient all turn out to have smaller total overlap, or the same total overlap and fewer parts.

The idea is to set up the coefficients so that for any single coloring, the contributions from all the terms add up to 1 when the coloring is a proper set coloring and 0 when it is not, where here a coloring may give each vertex any nonempty finite multiset of colors. To make sense of this, we first need to say how a power sum term produces colorings. The product $\ol{p}_{|V_1|}\cdots\ol{p}_{|V_l|}$ counts a choice of a nonempty color set for each part $V_i$, made independently, and a vertex then receives each color chosen for a part containing it once for each time it appears in that part. We say that a coloring $\kappa$ is \newword{generated by} $\mathbf{V}$ if it can arise this way, and we call such a choice a \newword{generation} of $\kappa$ from $\mathbf{V}$, so that a generation is a way of choosing a nonempty color set for each part with every vertex $v$ ending up with exactly the multiset of colors $\kappa(v)$. Each cover also carries a coloring of its own, the one giving every vertex $v$ each index $i$ as many times as $v$ appears in $V_i$, so that each part contributes its own color; we call this the \newword{associated coloring} of the cover.

We can now build the coefficients $c_{\mathbf{V}}$, working through the covers in order of increasing total overlap, and in order of increasing number of parts among covers with the same total overlap.
\begin{algorithm}\label{alg:coefficients}
Proceed through the following steps:
    \begin{enumerate}
    \item The covers of total overlap 0 are exactly those whose parts are all single vertices. For the cover in which each vertex of $G$ appears exactly once, which corresponds to the term $\ol{p}_1^{|V|}$ and generates every set coloring of $V$, whether or not it is proper, set $c_{\mathbf{V}}=1$. For every other cover of total overlap 0, set $c_{\mathbf{V}}=0$.
    \item For a cover $\mathbf{V}$ of positive total overlap, set $c_{\mathbf{V}}$ equal to $-\sum c_{\mathbf{V}'}$, divided by the product of the factorials of the multiplicities of the distinct parts of $\mathbf{V}$. Here the sum is taken over all pairs consisting of a cover $\mathbf{V}'$ with either smaller total overlap than $\mathbf{V}$, or the same total overlap and fewer parts, together with a generation of the associated coloring of $\mathbf{V}$ from $\mathbf{V}'$. In particular, each such $\mathbf{V}'$ contributes once for each generation of this coloring from $\mathbf{V}'$, and its coefficient $c_{\mathbf{V}'}$ has already been determined.
\end{enumerate}
\end{algorithm}
In terms of these coefficients, our expansion is as follows.

\begin{theorem}
\label{thm:pie_expansion}
For any graph $G=(V,E)$, the Kromatic symmetric function of $G$ can be written as the infinite sum $$ \ksf{G}=\sum_{\mathbf{V}} c_{\mathbf{V}}\,\ol{p}_{|V_1|}\cdots\ol{p}_{|V_l|}, $$ where $\mathbf{V}=(V_1,\dots,V_l)$ ranges over all sequences of connected multisets covering $G$ as above and the rational numbers $c_{\mathbf{V}}$ are the coefficients computed above.
\end{theorem}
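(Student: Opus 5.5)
We prove the identity monomial by monomial, i.e.\ coloring by coloring. Expanding the right-hand side in the variables $x_i$, every monomial comes from some term $\ol{p}_{|V_1|}\cdots\ol{p}_{|V_l|}$ through a generation of a multiset coloring $\kappa$ of $V$. A multiset coloring assigns each vertex a nonempty finite multiset of colors. Conversely, $\ksf{G}$ is the sum over proper set colorings $\kappa$ of $\prod_v\prod_{i\in\kappa(v)}x_i$.

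Both sides are symmetric, so it suffices to show the following: for every multiset coloring $\kappa$,
$$ \sum_{\mathbf{V}} c_{\mathbf{V}}\cdot\#\{\text{generations of }\kappa\text{ from }\mathbf{V}\}=\begin{cases}1&\kappa\text{ a proper set coloring},\\0&\text{otherwise.}\end{cases} $$
This compares coefficients of colorings rather than of monomials. A monomial on each side is the sum of its colorings' contributions, with colors labelled by vertex, so matching at the level of colorings implies the theorem. Call the left-hand sum $N(\kappa)$.

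Finiteness is a preliminary point. A generation of $\kappa$ from $\mathbf{V}$ needs $\sum_i|V_i|\le\sum_v|\kappa(v)|$ counted with multiplicity, so only finitely many covers contribute to each $N(\kappa)$. Each coefficient of $\ksf{G}$ in the monomial basis is then a finite sum, and the infinite expansion is well defined in $\widehat{\Lam}$.

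The main step is to reduce an arbitrary $\kappa$ to an associated coloring. Since $\kappa$ is determined up to relabelling colors, we group vertices by color. For each color $j$, let $U_j$ be the multiset of vertices receiving $j$, counted with multiplicity.

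\emph{Case 1: every $U_j$ is a set and an independent set.} Then $\kappa$ is a proper set coloring. We claim the only cover generating it is $\ol{p}_1^{|V|}$, which does so in exactly one way. Indeed, any part $V_i$ with $|V_i|\ge2$ is connected, so its color would appear either twice at one vertex or at two adjacent vertices. The single-vertex covers other than the one using each vertex once have $c=0$ by Step 1 of Algorithm~\ref{alg:coefficients}. Hence $N(\kappa)=1$.

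\emph{Case 2: otherwise.} Each $U_j$ splits canonically into its connected components, meaning the connected components of $G|_{\mathrm{supp}(U_j)}$ with multiplicities attached, which we refine so that repeated vertices sit in the same piece. Every generation of $\kappa$ from some $\mathbf{V}$ assigns each color $j$ to parts whose union is $U_j$. This suggests comparing $N(\kappa)$ with $N(\kappa_{\mathbf{W}})$, where $\kappa_{\mathbf{W}}$ is the associated coloring of a suitable cover $\mathbf{W}$.

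The plan is to show that $N(\kappa)$ depends only on the cover $\mathbf{W}(\kappa)$ read off from $\kappa$, as follows. Take the parts of $\mathbf{W}(\kappa)$ to be the nontrivial connected multisets of colors, together with singletons for the remaining color incidences. Then show $N(\kappa)=N(\kappa_{\mathbf{W}(\kappa)})$. The natural route is to observe that generations of $\kappa$ from $\mathbf{V}$ biject with generations of the associated coloring from $\mathbf{V}$, by collapsing each color class. We would first try to verify that $\ol{p}$ counts a color set per part and that the pieces of each color class glue independently.

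Once this reduction holds, it suffices to check $N(\kappa_{\mathbf{W}})=0$ for every cover $\mathbf{W}$ of positive total overlap. Every generation of $\kappa_{\mathbf{W}}$ comes from a cover $\mathbf{V}'$ that merges parts of $\mathbf{W}$ only along shared colors. Such a $\mathbf{V}'$ therefore has total overlap at most that of $\mathbf{W}$, with equality only when it has at most as many parts. The covers equal to $\mathbf{W}$ up to reordering generate $\kappa_{\mathbf{W}}$ in exactly $\prod r!$ ways, where the $r$ are the multiplicities of the distinct parts, since identical parts can permute their colors. Hence
$$ N(\kappa_{\mathbf{W}})=\Bigl(\prod r!\Bigr)c_{\mathbf{W}}+\sum c_{\mathbf{V}'}, $$
with the sum over strictly earlier covers. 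This is $0$ by Step 2 of Algorithm~\ref{alg:coefficients}.

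The main obstacle is the reduction step. Two things must be proved: no cover coming after $\mathbf{W}$ in the ordering generates $\kappa_{\mathbf{W}}$, and $N$ is genuinely invariant when passing from $\kappa$ to an associated coloring, especially when a color repeats at one vertex. If the invariance fails in general, the fallback is an induction on $\sum_v|\kappa(v)|$. In that induction each improper $\kappa$ is itself equal to an associated coloring $\kappa_{\mathbf{W}}$, obtained by taking as parts the multisets $U_j$ when they are connected. When a $U_j$ is disconnected, we would split color $j$ into distinct new colors on its components and show the generation counts agree.
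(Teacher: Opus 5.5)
Your outline has the same architecture as the paper's proof: compare coloring by coloring, replace $\kappa$ by the associated coloring of the cover $\mathbf{W}$ formed by the connected components (with multiplicities) of its color classes, and then read $N(\kappa_{\mathbf{W}})=\bigl(\prod r!\bigr)c_{\mathbf{W}}+\sum c_{\mathbf{V}'}=0$ off Step~2. Your finiteness remark and Case~1 are fine. The gap is that the two facts you set aside as ``the main obstacle'' are the whole content of the argument, and as written they are hoped for rather than proved.

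\emph{The reduction.} It holds in general, so no fallback induction is needed. In a generation of $\kappa$ from $\mathbf{V}$, every vertex of a part carrying color $j$ receives $j$, so that part lies in $\operatorname{supp}U_j$. Being connected, it lies in a single component $K$. Relabel $j$ on that part by the index $i$ of the part $W_i$ of $\mathbf{W}$, namely the restriction of $U_j$ to $K$. The parts now carrying $i$ have multiset union exactly $W_i$. Conversely, a part carrying two labels coming from the same $j$ would lie in two components with disjoint supports, so relabeling back is well defined. This gives a bijection between generations of $\kappa$ and generations of $\kappa_{\mathbf{W}}$ from every $\mathbf{V}$. You also need the easy observation that an improper $\kappa$ has a repeated color at a vertex or a color at both ends of an edge. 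Either way it yields a component of size at least $2$, so $\mathbf{W}(\kappa)$ has positive total overlap.

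\emph{The ordering claim.} This needs an actual computation, and your description of $\mathbf{V}'$ as ``merging parts of $\mathbf{W}$'' is backwards. Write $\mathcal{C}_j$ for the color sets on the parts $V_j$ of $\mathbf{V}'$, and $J_i=\{j: i\in\mathcal{C}_j\}$. By Proposition~\ref{prop:comb_interp_N}, each $W_i$ is the multiset union of the $V_j$ with $j\in J_i$. So parts of $\mathbf{W}$ get split, and a part of $\mathbf{V}'$ may be reused in several $W_i$. Counting $\sum_i|W_i|=\sum_j|V_j||\mathcal{C}_j|$ gives
\[ \sum_{i}(|W_i|-1)-\sum_{j}(|V_j|-1)=\sum_{i}(|J_i|-1)+\sum_{j}(|V_j|-1)(|\mathcal{C}_j|-1)\ge 0. \]
Equality forces every $W_i$ to be a single part of $\mathbf{V}'$. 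Since every part is used, $\mathbf{V}'$ then has at most as many parts as $\mathbf{W}$. It has the same number only if $\mathbf{V}'=\mathbf{W}$ and the generation is a matching of equal parts, which is where $\prod r!$ comes from.

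That last clause is essential and is missing from your sketch. A cover with the same total overlap and the same number of parts as $\mathbf{W}$ is not earlier in Algorithm~\ref{alg:coefficients}. So ``equality only when it has at most as many parts'' would not by itself let you invoke Step~2.
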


The sum is infinite, but it is well defined degree by degree which makes sense because the product $\ol{p}_{|V_1|}\cdots\ol{p}_{|V_l|}$ has no terms of degree below $|V_1|+\dots+|V_l|$, and for any given $d$ there are only finitely many covers with $|V_1|+\dots+|V_l|\le d$, so only finitely many terms contribute in each degree.

To actually run this recursion, we need to know which generations appear in it, and these turn out to have a simple description that does not mention colorings at all:

\begin{prop}
\label{prop:comb_interp_N}
Let $\mathbf{V}=(V_1,\dots,V_l)$ and $\mathbf{W}=(W_1,\dots,W_m)$ be two covers as above. Then the generations of the associated coloring of $\mathbf{W}$ from $\mathbf{V}$ correspond bijectively to the ways to write each part of $\mathbf{W}$ as a union of some of the parts of $\mathbf{V}$, with multiplicities added, using every part of $\mathbf{V}$ at least once.
\end{prop}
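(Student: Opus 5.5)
The plan is to unwind both definitions and match them directly. The associated coloring $\kappa_{\mathbf{W}}$ of $\mathbf{W}=(W_1,\dots,W_m)$ gives each vertex $v$ the multiset of colors in which index $j\in[m]$ appears with multiplicity equal to the multiplicity of $v$ in $W_j$. A generation of $\kappa_{\mathbf{W}}$ from $\mathbf{V}$ is a choice of a nonempty color set $T_i\subseteq\mathbb{N}$ for each part $V_i$ such that, for every vertex $v$ and every color $c$,
$$\sum_{i:\,c\in T_i}\mathrm{mult}_{V_i}(v)=\mathrm{mult}_{\kappa_{\mathbf{W}}(v)}(c).$$
We take this identity as the defining condition and work with it throughout.

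First, each $T_i$ may only use colors in $[m]$. If some $c\in T_i$ had $c>m$, then any vertex $v\in V_i$ would receive $c$, but $\kappa_{\mathbf{W}}(v)$ contains only indices in $[m]$. Since $V_i$ is nonempty, this is impossible. So a generation is the same as a relation $R\subseteq[l]\times[m]$, namely $(i,j)\in R$ iff $j\in T_i$, in which every $i$ is related to at least one $j$.

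Next, fix a color $j\in[m]$ and read the displayed condition for that color. It says exactly that the multiset sum (multiplicities added) of the parts $V_i$ with $j\in T_i$ equals $W_j$. In other words, $W_j$ is written as the multiset union of the parts $\{V_i : (i,j)\in R\}$.

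Nonemptiness of each $T_i$ translates into every part of $\mathbf{V}$ being used at least once across the decompositions of the $W_j$. Each $W_j$ is nonempty, so it receives at least one part. Conversely, given decompositions of each $W_j$ as a sum of parts of $\mathbf{V}$ using every part, we set $T_i=\{j: V_i \text{ is used in } W_j\}$. This is nonempty and satisfies the coloring identity color by color, so it is a generation. The two constructions are mutually inverse, which gives the bijection.

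The main subtlety is bookkeeping for repeated parts. Since $\mathbf{V}$ is a sequence, the parts $V_i$ are labeled by their index, so equal parts $V_i=V_{i'}$ count as distinct choices in both descriptions. Also, a given $V_i$ is used at most once in each $W_j$, because $T_i$ is a set. I would state explicitly that "a union of some of the parts" means a subset of the indices $[l]$, and with that convention the correspondence is literally the identity on relations $R$.
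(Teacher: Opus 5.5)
Your proposal is correct and follows essentially the same approach as the paper: fix a color $j$, read off that $W_j$ is the multiset sum of the $V_i$ with $j\in T_i$, translate nonemptiness of the $T_i$ into every part of $\mathbf{V}$ being used, and invert the construction. Two points that the paper leaves implicit are made explicit in your version: the check that no color outside $[m]$ can occur (because each $V_i$ is nonempty), and the convention that repeated parts are distinguished by their index, each used at most once per $W_j$.
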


\begin{proof}
The associated coloring of $\mathbf{W}$ uses the color set $[m]$, giving each vertex $v$ each color $i$ as many times as $v$ appears in $W_i$. A generation of it from $\mathbf{V}$ assigns a nonempty color set $\mathcal{C}_j\se[m]$ to each $V_j$ so that each vertex $v$ picks up exactly those colors, with those multiplicities. Fix a color $i$. Each vertex $v$ picks up $i$ once for each time it appears in a part $V_j$ with $i\in\mathcal{C}_j$, so the number of times $v$ appears in $W_i$ is the total number of times it appears in these parts. Thus $W_i$ is the union of the parts $V_j$ with $i\in\mathcal{C}_j$, with multiplicities added, and since each $\mathcal{C}_j$ is nonempty, every part of $\mathbf{V}$ gets used at least once. Conversely, any such collection of ways to write the $W_i$ determines the sets $\mathcal{C}_j$ and hence the generation, so the correspondence is a bijection, as claimed.
\end{proof}

It is helpful to picture this recursion as taking place on a directed multigraph whose vertices are the covers $\mathbf{V}$ of $G$ by connected multisets, with one directed edge from $\mathbf{V}'$ to $\mathbf{V}$ for each generation of the associated coloring of $\mathbf{V}$ from $\mathbf{V}'$, for $\mathbf{V}'\ne\mathbf{V}$. Ordering by total overlap and then by number of parts makes this graph acyclic, and for $\mathbf{V}$ of positive total overlap, $c_{\mathbf{V}}$ is minus the sum of the previously computed coefficients over all incoming edges, divided by the product of the factorials of the multiplicities of the distinct parts of $\mathbf{V}$. This is much like a Möbius inversion on a poset, generalized here to a directed multigraph.

\begin{proof}[Proof of Theorem~\ref{thm:pie_expansion}]
Expanding each product $\ol{p}_{|V_1|}\cdots\ol{p}_{|V_l|}$ as a sum over the colorings it generates, counted once for every generation, the proposed sum becomes $$ \sum_{\mathbf{V}}\ \sum_{\kappa}\ \sum_{\substack{\tn{generations of}\\\kappa\tn{ from }\mathbf{V}}}c_{\mathbf{V}}\prod_{v\in V}\prod_{i\in\kappa(v)}x_i, $$ where the middle sum is over all colorings of $V$ and each color in $\kappa(v)$ is counted with its multiplicity. So it suffices to show that for any coloring $\kappa$, adding up $c_{\mathbf{V}}$ over all pairs consisting of a cover $\mathbf{V}$ and a generation of $\kappa$ from $\mathbf{V}$ gives 1 if $\kappa$ is a proper set coloring and 0 otherwise.

Given $\kappa$, let $\mathbf{W}$ be the sequence of multisets given by the connected components of the color classes of $\kappa$, where the color class of a color is the multiset of vertices using it, with each vertex appearing as many times as it uses that color, and each component keeps these multiplicities. Then $\mathbf{W}$ is a cover of the kind considered above by construction, and since the parts of such a cover are always connected, any generation of $\kappa$ has to respect those components, so for every $\mathbf{V}$ the generations of $\kappa$ from $\mathbf{V}$ match the generations of the associated coloring of $\mathbf{W}$ from $\mathbf{V}$, using also that generations do not depend on the names of the colors. Moreover, $\kappa$ is a proper set coloring exactly when every color class is a stable set, which happens exactly when every component of every color class is a stable set, which is exactly the condition for the associated coloring of $\mathbf{W}$ to be a proper set coloring. So it is enough to prove the claim for that coloring in place of $\kappa$.

By Proposition \ref{prop:comb_interp_N}, if a cover $\mathbf{V}=(V_1,\dots,V_l)$ generates the associated coloring of $\mathbf{W}=(W_1,\dots,W_m)$, fix such a generation, let $\mc{C}_j\se[m]$ be the nonempty color set it assigns to $V_j$ as in the proof of that proposition, and let $J_i=\{j\in[l]\mid i\in\mc{C}_j\}$, which is nonempty since $W_i$ is. Then $|W_i|=\sum_{j\in J_i}|V_j|$, and since $\sum_{i=1}^m|J_i|=\sum_{j=1}^l|\mc{C}_j|$, we get $$ \sum_{i=1}^m(|W_i|-1)-\sum_{j=1}^l(|V_j|-1)=\sum_{i=1}^m\bigl(|J_i|-1\bigr)+\sum_{j=1}^l(|V_j|-1)\bigl(|\mc{C}_j|-1\bigr), $$ where every term on the right is nonnegative. So the total overlap of $\mathbf{V}$ is at most that of $\mathbf{W}$, with equality only if each part of $\mathbf{W}$ is a single part of $\mathbf{V}$. In that case, since every part of $\mathbf{V}$ is used, $\mathbf{V}$ has at most as many parts as $\mathbf{W}$, with equality only if $\mathbf{V}=\mathbf{W}$, in which case a generation just matches each part of $\mathbf{W}$ with an equal part of $\mathbf{W}$. So the sum we are computing is $c_\mathbf{W}$ times the product of the factorials of the multiplicities of the distinct parts of $\mathbf{W}$, plus the sum of $c_{\mathbf{V}}$ over all pairs consisting of a cover $\mathbf{V}$ that has smaller total overlap than $\mathbf{W}$, or the same total overlap and fewer parts, and a generation of the associated coloring of $\mathbf{W}$ from $\mathbf{V}$. If $\mathbf{W}$ has positive total overlap, then some part of $\mathbf{W}$ has size more than 1, so it either contains some vertex more than once or, being connected, contains two adjacent vertices, and thus that coloring is not a proper set coloring, while by the recursion defining the coefficients, the sum is exactly 0, as needed. If $\mathbf{W}$ has total overlap 0, then that coloring is a proper set coloring, and the only cover with nonzero coefficient that generates it is the one using each vertex exactly once, which does so in exactly one way, so the sum is 1, as needed.
\end{proof}

The same machinery works for the alternative $K$-theoretic power sum basis $\{\ol{p}'_\lam\}$, defined in \cite{pierson2025lyndon} by $$ 1+\ol{p}'_k=\prod_{i\ge 1}\frac1{1-x_i^k}=\prod_{i\ge 1}(1+x_i^k+x_i^{2k}+\cdots). $$ The only difference is that a color can now be assigned to a part more than once, so that a generation of the associated coloring of $\mathbf{W}$ from $\mathbf{V}$ now writes each part of $\mathbf{W}$ as a union of parts of $\mathbf{V}$ in which the same part of $\mathbf{V}$ may be used several times, and we write $c'_{\mathbf{V}}$ for the coefficients given by the same recursion with respect to these more general generations.

\begin{theorem}
\label{thm:pie_expansion_p_prime}
The Kromatic symmetric function of $G$ expands in the $\ol{p}'$ basis as $$ \ksf{G}=\sum_{\mathbf{V}}c'_{\mathbf{V}}\,\ol{p}'_{|V_1|}\cdots\ol{p}'_{|V_l|}, $$ where $\mathbf{V}=(V_1,\dots,V_l)$ again ranges over the same covers of $G$ as in Theorem~\ref{thm:pie_expansion}.
\end{theorem}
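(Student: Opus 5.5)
The plan is to rerun the proof of Theorem~\ref{thm:pie_expansion} word for word, checking only the steps where the new notion of generation matters. First we expand each product $\ol{p}'_{|V_1|}\cdots\ol{p}'_{|V_l|}$ as a sum over colorings. The factor $1+\ol{p}'_k=\prod_i(1+x_i^k+x_i^{2k}+\cdots)$ means that part $V_j$ now receives a nonempty finite \emph{multiset} $\mc{C}_j$ of colors. A vertex then gets color $i$ once for each appearance in $V_j$, times the multiplicity of $i$ in $\mc{C}_j$. So it again suffices to show the following: for every coloring $\kappa$, the sum of $c'_{\mathbf{V}}$ over all pairs ($\mathbf{V}$, generation of $\kappa$ from $\mathbf{V}$) is $1$ if $\kappa$ is a proper set coloring and $0$ otherwise.

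Next we need the analogue of Proposition~\ref{prop:comb_interp_N}. Fixing color $i$, the proof of that proposition shows that $W_i$ equals the multiset sum of the parts $V_j$, each taken with multiplicity equal to the multiplicity of $i$ in $\mc{C}_j$. This is exactly the "repeated use" description stated before the theorem, and every $V_j$ is still used at least once. The reduction from $\kappa$ to the associated coloring of $\mathbf{W}$ (the connected components of the color classes) goes through unchanged. A part of $\mathbf{V}$ is connected, so its contribution to color $i$ lies inside a single component of the color class of $i$. Properness is likewise equivalent to all components being stable sets.

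The key step, and the main point to check, is the overlap inequality that makes the recursion triangular. Let $J_i$ now be the multiset of indices $j$ used in writing $W_i$, and let $|\mc{C}_j|$ count with multiplicity. Then $|W_i|=\sum_{j\in J_i}|V_j|$ still holds, and we still have $\sum_i|J_i|=\sum_j|\mc{C}_j|$. The displayed identity
$$\sum_i(|W_i|-1)-\sum_j(|V_j|-1)=\sum_i(|J_i|-1)+\sum_j(|V_j|-1)(|\mc{C}_j|-1)$$
therefore holds verbatim, with nonnegative terms, because $|J_i|\ge1$ and $|\mc{C}_j|\ge1$.

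Equality of total overlap forces each $|J_i|=1$, so each $W_i$ is a single part of $\mathbf{V}$ used once. Every part of $\mathbf{V}$ is used, so $\mathbf{V}$ has at most $m$ parts. When it has exactly $m$ parts, we get $\mathbf{V}=\mathbf{W}$, and each $|\mc{C}_j|=1$, since the counts must match. So the self-generations are again just the matchings of equal parts, and there are $\prod r!$ of them, where the $r$ are the multiplicities of the distinct parts of $\mathbf{W}$. One subtlety deserves care: in the equality case a part $V_j$ of size $1$ could have $|\mc{C}_j|>1$ without breaking equality, through the second sum. However, $|J_i|=1$ for all $i$ together with $\sum_i|J_i|=\sum_j|\mc{C}_j|$ and $l\le m$ still forces every $|\mc{C}_j|=1$ once $l=m$. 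If $l<m$, $\mathbf{V}$ falls under the "same overlap, fewer parts" case already accounted for in the recursion.

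The conclusion then follows as before. For $\mathbf{W}$ with positive total overlap, the associated coloring is not proper, and the defining recursion for $c'_{\mathbf{W}}$ makes the total sum $0$. For overlap $0$, the coloring is proper. The singleton covers with nonzero coefficient are then only the one using each vertex exactly once, with $c'=1$. That cover generates a given proper coloring (which has one color per vertex) in exactly one way, because a single-vertex part must receive exactly that vertex's color multiset. The other overlap-$0$ covers have $c'=0$. Finally, well-definedness degree by degree is the same remark as before, since $\ol{p}'_k$ also has lowest degree $k$.
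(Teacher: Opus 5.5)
Your proposal is correct and is the paper's own argument: the paper's proof just says to rerun the proof of Theorem~\ref{thm:pie_expansion} with $\mc{C}_j$ now a multiset and $J_i$ containing each $j$ with multiplicity, and you carry this out in more detail, including the equality-case check that $l=m$ forces every $|\mc{C}_j|=1$. One small slip: the associated coloring of an overlap-$0$ cover need not have one color per vertex, since a vertex occurring in several singleton parts gets several distinct colors, but your actual justification (each singleton part must receive exactly that vertex's color multiset) does not depend on this.
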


\begin{proof}
The argument is identical to that of Theorem~\ref{thm:pie_expansion}, with the single change that a color may now be assigned to a part more than once, which is exactly what the more general generations account for (in the proof of that theorem, $\mc{C}_j$ is then a multiset, and $J_i$ contains each $j$ as many times as $i$ appears in $\mc{C}_j$).
\end{proof}

We use this $K$-theoretic framework to give an alternative proof of Stanley's power sum expansion for the chromatic symmetric function.

\begin{cor}[Stanley {\cite[Theorem 2.6]{stanley1995symmetric}}]
\label{cor:csf_p_expansion_bond}
The expansion of the chromatic symmetric function $\csf{G}$ in the power sum basis is $$  \csf{G}=\sum_{\pi \in L_G} \mu(\hat{0}, \pi) p_{\lambda(\pi)}.  $$
\end{cor}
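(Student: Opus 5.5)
The plan is to take the lowest degree homogeneous component of both sides of Theorem~\ref{thm:pie_expansion}, and then show that the recursion of Algorithm~\ref{alg:coefficients}, restricted to the covers that survive, is exactly the defining recursion of the Möbius function of $L_G$.

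\textbf{Step 1: extract degree $|V|$.} By Remark~\ref{rem:ksf_to_csf}, the degree $|V|$ component of $\ksf{G}$ is $\csf{G}$. Since $1+\ol{p}_k=\prod_i(1+x_i^k)$, we have $\ol{p}_k=p_k+(\text{terms of degree}\ge 2k)$. Hence the lowest degree part of $\ol{p}_{|V_1|}\cdots\ol{p}_{|V_l|}$ is $p_{|V_1|}\cdots p_{|V_l|}$, in degree $\sum_i|V_i|$.

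Every vertex lies in some part, so $\sum_i|V_i|\ge|V|$. Equality holds exactly when every vertex appears exactly once in exactly one part. In that case $\mathbf{V}$ is a set partition of $V$ into blocks inducing connected subgraphs, that is, an element $\pi\in L_G$, and $\ol{p}_{|V_1|}\cdots\ol{p}_{|V_l|}$ contributes $p_{\lam(\pi)}$ in degree $|V|$. Since the sum is well defined degree by degree, this gives
$$\csf{G}=\sum_{\pi\in L_G}c_\pi\, p_{\lam(\pi)}.$$
It therefore remains to prove $c_\pi=\mu(\hat{0},\pi)$.

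\textbf{Step 2: identify the recursion.} The blocks of $\pi$ are disjoint nonempty sets, so all parts of $\pi$ are distinct and the factorial normalization in step 2 of Algorithm~\ref{alg:coefficients} is $1$.

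For the base case, the only element of $L_G$ with total overlap $0$ is $\hat 0$, which is the all-singletons cover with $c_{\hat0}=1=\mu(\hat0,\hat0)$. The other overlap-$0$ covers have coefficient $0$.

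For $\pi$ of positive total overlap, I determine the covers $\mathbf{V}'$ that admit a generation of the associated coloring of $\pi$, using Proposition~\ref{prop:comb_interp_N}. Each block $B$ of $\pi$ must be a multiplicity-added union of parts of $\mathbf{V}'$, and every part of $\mathbf{V}'$ is used at least once.
\begin{itemize}
\item Each vertex occurs once in $\pi$ overall, so no part of $\mathbf{V}'$ can contain a repeated vertex.
\item No part can be used twice, whether in one block or in two different blocks.
\item No part can occur twice in $\mathbf{V}'$.
\end{itemize}
Hence $\mathbf{V}'$ is a set partition, each of whose blocks lies in a unique block of $\pi$. Its parts are connected by the definition of a cover, so $\mathbf{V}'=\sigma\in L_G$ with $\sigma\le\pi$ in refinement order. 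Conversely, each such $\sigma$ yields exactly one generation.

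The total overlap of a partition is $|V|-\#\text{blocks}$, so every $\sigma<\pi$ has strictly smaller total overlap and is thus among the covers summed over in the algorithm. Any contributing cover outside $L_G$ has been ruled out above. Therefore the recursion becomes
$$c_\pi=-\sum_{\sigma<\pi,\ \sigma\in L_G}c_\sigma,\qquad c_{\hat0}=1.$$
This is precisely the recursion defining $\mu(\hat0,\pi)$ on $L_G$, and induction on the number of blocks gives $c_\pi=\mu(\hat0,\pi)$.

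\textbf{Main obstacle.} The delicate point is Step 2's claim that no cover involving multisets or repeated parts can generate the associated coloring of a set partition $\pi$. Otherwise extra terms would enter the recursion beyond the bond lattice. I would handle this through the multiplicity count in Proposition~\ref{prop:comb_interp_N}: summing multiplicities over all blocks of $\pi$ gives each vertex total count $1$, and this forces each part of $\mathbf{V}'$ to be a set used exactly once.
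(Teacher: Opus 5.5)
Your proposal is correct and follows essentially the same route as the paper: extract the degree-$|V|$ terms of Theorem~\ref{thm:pie_expansion} to reduce to showing $c_\pi=\mu(\hat{0},\pi)$ on $L_G$, then show that the only covers generating the associated coloring of $\pi$ are the connected partitions refining $\pi$, each in exactly one way, so the recursion becomes the M\"obius recursion. Your multiplicity count (each vertex has total count $1$ across the blocks of $\pi$) is the paper's observation that ``each vertex has only one color''; note that the same count also rules out two distinct parts of $\mathbf{V}'$ sharing a vertex, a case your bullet list never states separately.
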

\begin{proof}
By Remark \ref{rem:ksf_to_csf}, we can recover $\csf{G}$ by taking the lowest degree terms in Theorem \ref{thm:pie_expansion}. Since $1+\ol{p}_k=\prod_{i\ge 1}(1+x_i^k)$, the lowest degree part of $\ol{p}_k$ is $p_k$, so the lowest degree part of the product $\ol{p}_{|V_1|}\cdots\ol{p}_{|V_l|}$ is $p_{|V_1|}\cdots p_{|V_l|}$, in degree $|V_1|+\dots+|V_l|$. Now for any of our covers, $|V_1|+\dots+|V_l|\ge|V|$ because the supports cover $V$, with equality exactly when the multisets are pairwise disjoint sets, and since their supports are connected, this happens exactly when $\mathbf{V}$ lists the blocks of a connected partition of $V$. So the degree $|V|$ terms are precisely those indexed by the elements of $L_G$, and identifying each $\pi\in L_G$ with the cover given by its blocks, we get $$ \csf{G}=\sum_{\pi\in L_G}c_\pi\, p_{\lam(\pi)}. $$ Thus, it suffices to show that $c_\pi=\mu(\hat{0},\pi)$.

We can do this by induction on the total overlap of the cover given by the blocks of $\pi$. Note that this total overlap is $|V|-\ell(\lam(\pi))$, so ordering covers by increasing total overlap orders the connected partitions by decreasing number of blocks, and the cover by singletons used in the base case of the recursion is just $\hat{0}$, giving $c_{\hat{0}}=1=\mu(\hat{0},\hat{0})$ for the base case. For any other $\pi$, we just need to know which covers generate the associated coloring of the cover given by the blocks of $\pi$. In such a generation, each part receives a nonempty set of colors, and the colors of each vertex get split up among the parts containing it. Since each vertex has only one color, each part must receive a single color and thus lie inside the corresponding block of $\pi$, and each vertex must lie in exactly one part, with multiplicity 1. So the covers generating it are exactly the connected partitions refining $\pi$, each in exactly one way, and those other than $\pi$ itself are exactly $\sigma<\pi$. The recursion is thus $$ c_\pi=-\sum_{\sigma<\pi}c_\sigma=-\sum_{\sigma<\pi}\mu(\hat{0},\sigma)=\mu(\hat{0},\pi), $$ using the inductive hypothesis and then the recursion of the Möbius function of $L_G$, as needed.
\end{proof}


\section{Hopf algebra interpretations}\label{sec:hopf}

In this final section, our goal is to reinterpret both the $\ol{p}$-expansion from \cite{pierson2025lyndon} and the $\m$-expansion from \cite{crew2023kromatic} using Marberg's Hopf algebra $\mWGraphs$ from \cite{marberg2023kromatic}. We will also develop some other combinatorially interesting formulas along the way.

\subsection{Background on Hopf algebras}\label{subsec:hopf-background}

We follow Marberg \cite{marberg2023kromatic}. Previously, our graphs have mostly been unweighted, but for the Hopf algebra theory it is convenient to work with weighted graphs, which reduce to the unweighted setting when all vertex weights $w_1,\dots,w_n$ equal $1$.

A \newword{Hopf algebra} is an algebra $A$ over some field $\mb{K}$, which we always take to have characteristic 0 so that the exponential and logarithm series of \S\ref{subsec:grouplike} are defined, together with a \newword{coproduct} $\Delta:A\to A\otimes A$, and a \newword{counit} $\epsilon: A\to \mb{K}$ that are both algebra homomorphisms, satisfying certain compatibility axioms. In particular, there must be an \newword{antipode} $S:A\to A$, which is a linear map (and in our cases an algebra homomorphism) satisfying the property that if $\Delta f = \sum g\otimes h,$ then $$ \sum S(g)h = \sum gS(h) = \epsilon(f)1_A. $$ A Hopf algebra is \newword{commutative} if the multiplication is commutative, and \newword{cocommutative} if swapping the order of the factors in each term of the coproduct keeps the coproduct the same. A Hopf algebra is \newword{graded} if it can be broken down as $A = \bigoplus_{n\ge 0}A_n$ such that if $f\in A_m$ and $g\in A_n$, then $fg\in A_{m+n}$, and if $f\in A_n$ and $\Delta f = \sum g\otimes h$, then for each $g\in A_k$, we have $h\in A_{n-k}$. A graded Hopf algebra is \newword{connected} if $A_0\cong \mb{K}$.

A \newword{character} is an algebra homomorphism $\zeta:A\to\mb{K}$, and a \newword{combinatorial Hopf algebra (CHA)} $(A,\zeta_A)$ is a graded connected Hopf algebra together with a specific choice of character $\zeta_A$. The coproduct, counit, and antipode put a group structure on the characters of $A$, with multiplication given by $$ (\zeta_1*\zeta_2)(f) = \sum \zeta_1(g)\zeta_2(h), $$ where again $\Delta f = \sum g\otimes h$, and multiplicative inverses $$ \zeta^{-1}(f) = \zeta(S(f)). $$ The compatibility axioms guarantee that this is a valid group structure.

The \newword{Hopf algebra of weighted graphs} $\textsf{WGraphs}$ is freely generated by the set of all connected graphs up to isomorphism, with product given by disjoint unions $G\sqcup H$, counit sending the empty graph to 1 and all other graphs to 0, and coproduct given by $$ \Delta G := \sum_{T\sqcup U = V(G)}G|_T\otimes G|_U. $$ $\WGraphs$ can be turned into a graded Hopf algebra by saying that $G\in \WGraphs_n$ if the sum of the weights of the vertices of $G$ is $n$, in which case $\WGraphs$ is also connected. We can also see from the definition that $\WGraphs$ is commutative and cocommutative.

The connection between the CSF and Hopf algebras is that if one makes $\textsf{WGraphs}$ into a CHA by equipping it with the character $\zeta_\textsf{WGraphs}:\textsf{WGraphs}\to \mb{K}$ given by 
$$ \zeta_\WGraphs(G) := \begin{cases}
    1 &\tn{if }G\tn{ is edgeless},\\
    0 & \tn{else,}
\end{cases} $$ 
then there is a unique induced CHA map $(\WGraphs,\zeta_\WGraphs) \to (\Sym,\zeta_\Sym)$ given by $G\mapsto X_G$, where $\Sym$ is the \newword{Hopf algebra of symmetric functions}, which is the ring $\Lam$ from \S\ref{sec:background} with coefficients in $\mb{K}$, with coproduct given by $$ \Delta p_n := p_n \otimes 1 + 1 \otimes p_n, $$ where $p_n:=x_1^n+x_2^n+\dots$ is the $n^{\text{th}}$ \newword{power sum symmetric function}, and character $\zeta_\Sym$ given by specializing one variable $x_1$ to 1 and all other variables to 0. This map is unique because $(\Sym,\zeta_\Sym)$ is a \newword{terminal object} in the category of cocommutative CHAs, meaning there is a unique map that preserves the Hopf algebra structure and commutes with the character maps from any cocommutative CHA to $(\Sym,\zeta_\Sym)$.

Marberg \cite{marberg2023kromatic} defines a modified Hopf algebra of weighted graphs $\mWGraphs$ by first taking the \newword{completion} (meaning allowing infinite linear combinations of graphs) and then keeping the product as being the disjoint union, but modifying the coproduct to be $$ \blacktriangle G := \sum_{T\cup U = V(G)} G|_T\otimes G|_U, $$ where the difference is that the vertex sets $T$ and $U$ are allowed to overlap. Then $\mWGraphs$ is commutative and cocommutative in the same way as $\WGraphs$. Marberg makes $\mWGraphs$ into what he calls a \newword{linearly compact (LC)-Hopf algebra} (essentially a Hopf algebra in which infinite linear combinations are allowed), and equips it with a character $\ol{\zeta}_{\WGraphs}:\mWGraphs\to \mb{K} \ldb t \rdb $, which, like all characters to $\mb{K} \ldb t \rdb $ below, is required to be continuous and to agree with the counit at $t=0$, given by 
$$ \ol{\zeta}_{\WGraphs}(G) := \begin{cases}
    t^{w(G)}& \tn{if }G\tn{ is edgeless},\\
    0 & \tn{else},
\end{cases} $$
where $w(G)$ is the sum of the weights of all the vertices of $G$. The reason he defines the character as mapping to $\mb{K} \ldb t \rdb $ instead of to $\mb{K}$ is so that it is well-defined for infinite linear combinations of graphs. His motivation for introducing this Hopf algebra $\mWGraphs$ is that it gives a Hopf algebra interpretation for the KSF.

The Hopf algebra interpretation of the KSF is that if one makes the completion $\mSym$, which is the ring $\widehat{\Lam}$ from \S\ref{sec:background} with coefficients in $\mb{K}$, into an LC-Hopf algebra by keeping the product and coproduct the same as for $\Sym$, and equips it with the character $\ol{\zeta}_\Sym$ specializing $x_1$ to $t$ and all other variables to 0, then the unique induced map $(\mWGraphs, \ol{\zeta}_{\WGraphs})\to (\mSym,\ol{\zeta}_\Sym)$ is the map $G\mapsto \ol{X}_G$ sending each graph to its KSF. This map is unique because $(\mSym,\ol{\zeta}_\Sym)$ is a terminal object in the category of cocommutative LC-Hopf algebras with characters.

\subsection{The antipode of \texorpdfstring{$\mWGraphs$}{mWGraphs}}\label{subsec:antipode}

In this section we derive two different formulas for the antipode of $\mWGraphs$, which we will use in later sections and also find to be of combinatorial interest in their own right. We first recall Schmitt's antipode formula for $\WGraphs$:

\begin{theorem}[Schmitt \cite{schmitt1994incidence}, (12.1)]\label{thm:schmitt}
    The antipode for $\WGraphs$ is
    $$ S(G) = \sum_{\pi}(-1)^{|\pi|}G_\pi, $$
    where $\pi$ ranges over all ordered set partitions of $V(G)$ (meaning the parts within the set partition are ordered, while the vertices within each part are not ordered), $|\pi|$ is the number of parts in $\pi$, and $G_\pi$ is the disjoint union of the induced subgraphs formed by restricting $G$ to each part of $\pi$.
\end{theorem}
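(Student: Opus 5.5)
The approach is the standard Takeuchi-type argument for a graded connected bialgebra. For $G$ with $V(G)\neq\varnothing$, define $S$ recursively through the defining identity $\sum S(G|_T)\,G|_U=\epsilon(G)1$ applied to $\Delta G=\sum_{T\sqcup U=V(G)}G|_T\otimes G|_U$. Isolating the term $T=V(G)$, $U=\varnothing$ gives
$$S(G)=-\sum_{\substack{T\sqcup U=V(G)\\ U\neq\varnothing}}S(G|_T)\,G|_U .$$
The antipode of a graded connected bialgebra is unique and is determined by this recursion. So it suffices to show that $F(G):=\sum_\pi(-1)^{|\pi|}G_\pi$ satisfies the same recursion together with $F(\varnothing)=\varnothing$ (the unit); the latter holds because the only ordered set partition of the empty set is the empty one. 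We extend $F$ multiplicatively if needed, but the recursion only involves induced subgraphs, so defining $F$ on all graphs directly by the formula is enough.

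We proceed by strong induction on $|V(G)|$. Assume $F(G|_T)=S(G|_T)$ for all proper subsets $T$. Then the right-hand side of the recursion becomes
$$-\sum_{\substack{T\sqcup U=V(G)\\ U\neq\varnothing}}\ \sum_{\pi\text{ ordered partition of }T}(-1)^{|\pi|}\,G_\pi\sqcup G|_U .$$
The key bijection sends a pair $(\pi,U)$ with $U\neq\varnothing$ to the ordered set partition $\pi'=(\pi_1,\dots,\pi_k,U)$ of $V(G)$. This is a bijection onto all ordered set partitions of $V(G)$, since every nonempty $V(G)$ has partitions with at least one block, and $U$ is recovered as the last block. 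The product $G_\pi\cdot G|_U$ is the disjoint union, which equals $G_{\pi'}$. The sign satisfies $-(-1)^{|\pi|}=(-1)^{|\pi'|}$. Hence the sum is exactly $\sum_{\pi'}(-1)^{|\pi'|}G_{\pi'}=F(G)$, which completes the induction.

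The only delicate point is justifying that the one-sided recursion determines $S$. This holds because $\WGraphs$ is graded and connected, so $S$ is the convolution inverse of the identity and is computed degree by degree, with $S=\mathrm{id}$ on $\WGraphs_0$. Since $S$ is an algebra (anti)homomorphism and $\WGraphs$ is commutative, it would also suffice to treat connected $G$. However, the argument above works for any $G$ directly and needs no such reduction. We also note that vertex weights play no role: the coproduct just restricts the weights to $T$ and $U$, so the weighted case is identical.
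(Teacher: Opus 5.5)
Your proof is correct. The paper does not prove this statement itself; it cites Schmitt and, as you also observe, notes that the weights play no role. Your argument is nonetheless the same method the paper uses for its $K$-analogue, Theorem~\ref{thm:schmitt_K}: strong induction on $|V(G)|$, isolating the $T=V(G)$ term of $\sum S(G|_T)\,G|_U=0$, and appending $U$ as the last block. The only difference is that there the overlapping coproduct of $\mWGraphs$ also forces a geometric-series inversion of $\sum_{U\se V(G)}G|_U$, which your disjoint setting avoids because only $U=\varnothing$ pairs with $T=V(G)$.
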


Both this formula and the one in Theorem \ref{thm:humpert-martin} below are stated in \cite{schmitt1994incidence,humpert2012incidence} for unweighted graphs, but they hold verbatim in $\WGraphs$, since the product and coproduct of $\WGraphs$ are given by the same formulas as in the unweighted case, with each induced subgraph simply inheriting its vertex weights from $G$, so the weights play no role in either proof.

We give the following $K$-analogue of Schmitt's result:

\begin{theorem}\label{thm:schmitt_K}
    The antipode for $\mWGraphs$ is $$ \ol{S}(G) = \sum_{\ol{\pi}}(-1)^{|\ol{\pi}|}G_{\ol{\pi}}, $$ where $\ol{\pi}$ ranges over all finite ordered sequences of nonempty subsets of $V(G)$ (where the subsets may overlap, and the same subset may show up arbitrarily many times) such that each vertex is in at least one of the subsets, $|\ol{\pi}|$ is the total number of subsets, and $G_{\ol{\pi}}$ is again the disjoint union of all the induced subgraphs of $G$ corresponding to the subsets in $\ol{\pi}$, with repetition.
\end{theorem}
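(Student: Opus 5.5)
We will obtain the formula from Takeuchi's formula, adapted to the linearly compact setting. Write $\blacktriangle$ as $\epsilon\otimes\mathrm{id}+\mathrm{id}\otimes\epsilon+\blacktriangle^+$, with the reduced coproduct $\blacktriangle^+$ keeping only the terms with $T,U\ne\varnothing$. Since $\mWGraphs$ is connected in the appropriate completed sense (the degree-zero part is spanned by the empty graph), the antipode should satisfy
$$ \ol{S}=\sum_{k\ge 1}(-1)^k m^{(k-1)}\circ(\blacktriangle^+)^{(k-1)}, $$
where $(\blacktriangle^+)^{(k-1)}$ is the $(k-1)$-fold iterated reduced coproduct and $m^{(k-1)}$ multiplies $k$ tensor factors by disjoint union. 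Before using this we must check it converges. Applying $(\blacktriangle^+)^{(k-1)}$ to $G$ produces only tensors $G|_{T_1}\otimes\dots\otimes G|_{T_k}$ with all $T_j$ nonempty, so the result after multiplication has weight at least $k$. Hence in each degree only finitely many $k$ contribute, and the sum is a well-defined element of the completion.

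The steps are as follows.

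\begin{enumerate}
\item \textbf{Compute the iterated coproduct.} By coassociativity and induction on $k$, we will show that
$$ \blacktriangle^{(k-1)}G=\sum_{T_1\cup\dots\cup T_k=V(G)}G|_{T_1}\otimes\dots\otimes G|_{T_k}. $$
The inductive step applies $\blacktriangle$ to the last factor $G|_{T_k}$, splitting $T_k$ as $T_k'\cup T_k''$. The cover condition is preserved because induced subgraphs of induced subgraphs are induced subgraphs.

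\item \textbf{Pass to the reduced coproduct.} The reduced version is obtained by restricting the same sum to sequences in which every $T_j$ is nonempty.

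\item \textbf{Apply the multiplication.} Multiplying the factors by disjoint union sends each term to $G_{\ol\pi}$ for the ordered sequence $\ol\pi=(T_1,\dots,T_k)$. Sequences with repeated subsets are counted separately, since they are distinct ordered sequences. Collecting terms with the sign $(-1)^k=(-1)^{|\ol\pi|}$ then gives exactly the stated formula.
\end{enumerate}

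The empty graph is a special case: there the only term is the empty sequence, giving $\ol S(\varnothing)=\varnothing$.

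Since citing Takeuchi's formula in the LC setting may be delicate, as a fallback we will verify the defining identity directly. Write $\ol S'(G)$ for the right-hand side of the stated formula. We will check that
$$ \sum_{T\cup U=V(G)}\ol S'(G|_T)\,G|_U=\epsilon(G)\varnothing. $$
Fix a nonempty $G$. Expanding the left side gives a sum over pairs $(\ol\rho,U)$, where $\ol\rho$ is an ordered sequence of nonempty sets covering $T$, with $T\cup U=V(G)$. Each such pair produces the graph $G_{\ol\pi}$, where $\ol\pi=(\ol\rho,U)$ is the sequence $\ol\rho$ followed by $U$ (or just $\ol\rho$ if $U=\varnothing$), with sign $(-1)^{|\ol\rho|}$. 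So for a fixed covering sequence $\ol\pi=(P_1,\dots,P_k)$ of $V(G)$, the contributions come from exactly two pairs:
\begin{itemize}
\item $\ol\rho=\ol\pi$ and $U=\varnothing$, with sign $(-1)^k$;
\item $\ol\rho=(P_1,\dots,P_{k-1})$ and $U=P_k$, with sign $(-1)^{k-1}$.
\end{itemize}
These two contributions cancel, so the left side vanishes for nonempty $G$.

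One point needs care in the second pair: $T$ must equal the union of $P_1,\dots,P_{k-1}$, which is automatic since $\ol\rho$ covers $T$. When $k=1$, the second pair uses the empty sequence on $T=\varnothing$ with $U=V(G)$, which is legitimate. The two pairs are distinct because $P_k\neq\varnothing$. The identity with $\ol S'$ on the right follows by symmetry, using cocommutativity.

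The main obstacle is exactly this bookkeeping: making sure that, for a fixed $\ol\pi$, the overlap allowed between $T$ and $U$ yields precisely these two pairs and no others. The remaining ingredient, uniqueness of the antipode as the convolution inverse of the identity, is standard once convergence in the completion has been noted.
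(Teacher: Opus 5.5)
Your proposal is correct, but it takes a different route from the paper.

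\textbf{The paper's route.} The paper argues by induction on $|V(G)|$. It starts from the antipode identity $\sum_{T\cup U=V(G)}\ol{S}(G|_T)\sqcup G|_U=0$ and isolates the $T=V(G)$ terms, which gives $\ol{S}(G)\sqcup\sum_{U\se V(G)}G|_U$. It then inverts $\sum_{U\se V(G)}G|_U$ as a geometric series. Finally it matches the resulting terms with sequences $\ol{\pi}$ by cutting $\ol{\pi}$ at the first index where all of $V(G)$ becomes covered. The prefix comes from $\ol{S}(G|_T)$, the next set is $U$, and the arbitrary tail comes from the geometric series.

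\textbf{Your route.} You either read the formula off Takeuchi's expansion $\sum_k(-1)^k m^{(k-1)}\circ(\blacktriangle^+)^{(k-1)}$, or check directly that the candidate $\ol{S}'$ is a convolution inverse of $\mathrm{id}$. In the direct check, each $\ol{\pi}=(P_1,\dots,P_k)$ gets exactly two cancelling contributions: one with $U=\varnothing$, and one with $P_k$ peeled off as $U$. You then invoke uniqueness of convolution inverses.

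\textbf{The Takeuchi route is not actually delicate.} Put $f=\mathrm{id}-\eta\epsilon$. One has $(\eta\epsilon+f)*\sum_{k=0}^{N}(-f)^{*k}=\eta\epsilon+(-1)^{N}f^{*(N+1)}$. Your weight bound shows that $f^{*(N+1)}(G)$ lives in weight at least $N+1$, so this error term tends to $0$ in the linearly compact topology. Since local nilpotence fails here, this weight bound is exactly the ingredient that replaces the usual graded-connected argument.

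\textbf{What each approach buys.}
\begin{itemize}
\item The paper's argument derives the formula from the defining recursion rather than verifying a guessed answer. It also shows the inversion of the grouplike element $\sum_{U\se V(G)}G|_U$, which reappears as $Y(G)$ in \S\ref{subsec:grouplike}.
\item Your argument is shorter and needs no induction and no division.
\item It makes transparent that the formula is just Takeuchi's formula with overlapping blocks allowed, just as Schmitt's formula is Takeuchi's formula for $\WGraphs$.
\item Your fallback shows that $\ol{S}'$ is a two-sided inverse. This yields existence of the antipode as well, rather than relying on Marberg for it. It uses that $\ol{S}'$ extends continuously, which holds because $\ol{S}'(G)$ involves only graphs of weight at least that of $G$.
\end{itemize}

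One small point: your symmetry step for the right-hand identity also uses commutativity of $\sqcup$, not only cocommutativity. This is of course immediate.
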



\begin{proof}
    We use induction on $|V(G)|$. For the base case of $|V(G)|=0$, $G=\varnothing$ must be the empty graph, and then $\ol{S}(\varnothing)=\varnothing$, which matches the claimed formula, since that formula gives a single term with $\ol{\pi}$ as the empty sequence of length 0, and $G_{\ol{\pi}}=\varnothing$ as the empty graph. For any other graph $G$, the antipode needs to satisfy $$ \sum_{T\cup U = V(G)}\ol{S}(G|_T)\sqcup G|_U = 0, $$ and we can assume inductively the claimed formula holds for all graphs with fewer vertices. For terms with $T=V(G)$, the other subset $U$ is allowed to be any subset of $V(G)$, so we can rearrange to get $$ \ol{S}(G)\sqcup \left(\sum_{U\se V(G)}G|_U\right) = -\sum_{\substack{T\cup U = V(G), \\ T\ne V(G)}}\ol{S}(G|_T)\sqcup G|_U. $$ Now we can ``divide" both sides by the second term on the left side, because the multiplicative inverse of that term is a geometric series, which is legal since we allow infinite linear combinations in $\mWGraphs$: $$ \left(\varnothing+\sum_{\substack{U\se V(G),\\ U\ne \varnothing}}G|_U\right)^{-1}=\sum_{k\ge 0}\left(-\sum_{\substack{U\se V(G),\\ U\ne \varnothing}}G|_U\right)^k, $$ where we are using the geometric series formula $(1+x)^{-1}=\sum_{k\ge 0}(-x)^k$ together with the fact that the empty graph $\varnothing$ is the multiplicative identity of $\mWGraphs$. Thus, we can write $$ \ol{S}(G) = \left(\sum_{\substack{T\cup U=V(G),\\ T\ne V(G)}} \ol{S}(G|_T)\sqcup (-G|_U)\right)\sqcup \sum_{k\ge 0}\left(-\sum_{\substack{U\se V(G),\\ U\ne \varnothing}}G|_U\right)^k. $$ We can now describe a bijection between the terms above and terms of the form $(-1)^{|\ol{\pi}|}G_{\ol{\pi}}$. By the inductive hypothesis, each $\ol{S}(G|_T)$ can be written as the sum of all terms of the form $(-1)^{|\ol{\pi}'|}G_{\ol{\pi}'}$ such that $\ol{\pi}'$ is a sequence of subsets of $V(G)$ that use only vertices in $T$, but use all vertices in $T$ at least once. Thus, in a term of $\ol{S}(G)$, we are starting with such a sequence $\ol{\pi}'$. Then we multiply by $(-G|_U)$, which appends the subset $U$ to the sequence $\ol{\pi}'$. We have increased the length of the sequence by 1 and also flipped the sign of the term, thus maintaining the fact that the sign is equal to $(-1)^{\text{length}}$. Also, since $U$ must include all vertices not in $T$, our sequence now covers all vertices in $V(G)$. Then taking a term from the final sum corresponds to appending any number of additional subsets to our sequence, flipping the sign each time and thus maintaining the fact that the sign is $(-1)^{\text{length}}$. The resulting sequence is the $\ol{\pi}$ corresponding to the term from the product.

    To see that this is indeed a bijection, note that $U$ can be recovered from $\ol{\pi}$ because it is the first term such that if we cut off the sequence after $U$, all vertices are covered. Then $T$ can also be recovered from $\ol{\pi}$ since it is the subset of vertices already covered before that term. Then we need to take the term from $\ol{S}(G|_T)$ corresponding to the initial subsequence of $\ol{\pi}$ ending before $U$, and the term from the final sum corresponding to the final subsequence of $\ol{\pi}$ starting after $U$. This gives us a unique term in the $\ol{S}(G)$ sum corresponding to each $\ol{\pi}$, so the correspondence is a bijection. Thus, by induction, $\ol{S}(G)$ is always equal to the sum of the $(-1)^{|\ol{\pi}|}G_{\ol{\pi}}$ terms over all choices of $\ol{\pi}$, as claimed.
\end{proof}

We give an example now to illustrate the formula in the smallest nontrivial case, and in particular to show where the infinite sums come from.

\begin{example}
    If $G=v$ is a single vertex $v$, the antipode needs to satisfy $$ \ol{S}(v) \sqcup v+\ol{S}(v)\sqcup\varnothing+\ol{S}(\varnothing)\sqcup v=0. $$ Rearranging gives $$ \ol{S}(v) = -\frac{v}{\varnothing+v}=-v+(v\sqcup v)-(v\sqcup v\sqcup v)+\cdots=\sum_{n\ge 1}(-v)^n. $$ Thus, we get an infinite alternating sum where each term is the disjoint union of some number of isolated vertices, since every element of $\ol{\pi}$ needs to be a single vertex, and thus each term is $(-1)^{|\ol{\pi}|}$ times the disjoint union of $|\ol{\pi}|$ copies of the vertex.
\end{example}


Humpert and Martin give the following cancellation-free formula for the antipode of $\WGraphs$:

\begin{theorem}[Humpert--Martin \cite{humpert2012incidence}, Theorem 3.1]\label{thm:humpert-martin}
    The antipode of $\WGraphs$ is given by $$ S(G) = \sum_{F\tn{ a flat}}(-1)^{c(G_F)}a(G/F)G_F, $$ where $c(G_F)$ is the number of connected components of $G_F$ and $a(G/F)$ is the number of acyclic orientations on $G/F$.
\end{theorem}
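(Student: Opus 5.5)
Since the statement is the Humpert--Martin theorem, cited from \cite{humpert2012incidence}, the plan is to derive it from Schmitt's formula (Theorem~\ref{thm:schmitt}) by grouping terms. Start from
$$S(G)=\sum_{\pi}(-1)^{|\pi|}G_\pi,$$
where $\pi$ runs over ordered set partitions of $V(G)$. The graph $G_\pi$ is the disjoint union of the induced subgraphs $G|_{B}$ over the blocks $B$ of $\pi$. So $G_\pi=G_F$, where $F\se E(G)$ is the set of edges of $G$ with both endpoints in one block. Such an $F$ is always a flat: any edge of $G$ joining two vertices already in the same component of $G_F$ lies inside a block, hence is in $F$. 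Conversely, every flat arises this way. Therefore
$$S(G)=\sum_{F\text{ a flat}}\Bigl(\sum_{\pi:\,F(\pi)=F}(-1)^{|\pi|}\Bigr)G_F,$$
and it suffices to show that the inner coefficient equals $(-1)^{c(G_F)}a(G/F)$.

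Fix a flat $F$ and let $K_1,\dots,K_c$ be the components of $G_F$, where $c=c(G_F)$. An ordered partition $\pi$ has $F(\pi)=F$ exactly when each block is a union of components $K_j$, no two of which are joined by an edge of $G$. Indeed, each component must lie in one block, and any $G$-edge between two components in the same block would lie in $F(\pi)$ but not in the flat $F$. Contracting the components, this says that $\pi$ corresponds to an ordered partition of $V(G/F)$ into stable sets. The inner sum therefore equals
$$\sum_{k}(-1)^k\,k!\,|\textsf{Stab}_k(G/F)|,$$
where $|\textsf{Stab}_k(G/F)|$ is the number of unordered stable set partitions of $G/F$ into $k$ blocks.

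The key step, and the only real content, is to identify this with Stanley's reciprocity. We have $\chi_{G/F}(x)=\sum_k|\textsf{Stab}_k(G/F)|\,x(x-1)\cdots(x-k+1)$. Setting $x=-1$ gives $x(x-1)\cdots(x-k+1)=(-1)^kk!$, so the inner sum equals $\chi_{G/F}(-1)$. Stanley's theorem on acyclic orientations gives $\chi_{H}(-1)=(-1)^{|V(H)|}a(H)$. Since $|V(G/F)|=c$, the inner sum is $(-1)^{c}a(G/F)$, which gives the claimed formula.

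The main obstacle is invoking Stanley's reciprocity $\chi_H(-1)=(-1)^{|V(H)|}a(H)$, which is not proved in the paper. If a self-contained argument is preferred, one can prove it by deletion--contraction. Both sides satisfy $f(H)=f(H\setminus e)-f(H/e)$ up to the sign bookkeeping, because $a(H)=a(H\setminus e)+a(H/e)$. Both sides agree on edgeless graphs, since $\chi(-1)=(-1)^n$ and $a=1$ there. Weights play no role anywhere in the argument, consistent with the remark after Theorem~\ref{thm:schmitt}.
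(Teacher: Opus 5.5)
Your argument is correct. Grouping the terms of Schmitt's formula by the set $F(\pi)$ of edges lying inside blocks, identifying the $\pi$ with $F(\pi)=F$ with ordered stable set partitions of $G/F$, and evaluating $\sum_k(-1)^k k!\,|\textsf{Stab}_k(G/F)|=\chi_{G/F}(-1)=(-1)^{c(G_F)}a(G/F)$ all check out. The only property of flats you use is that a flat contains every edge of $G$ with both ends in one component of $G_F$, which is a restatement of the paper's definition.

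The paper gives no proof of this statement; it is quoted from Humpert and Martin, so there is no internal argument to match. The closest point of contact is the proof of Theorem~\ref{thm:humpert-martin_K}. There the paper applies Theorems~\ref{thm:schmitt} and~\ref{thm:humpert-martin} to $C_{\ol{F}}(G)/\ol{F}$ and compares the coefficients of the edgeless term. That comparison is exactly your coefficient identity for the empty flat: the signed count $\sum_\pi(-1)^{|\pi|}$ over ordered stable set partitions of a graph $H$ equals $(-1)^{|V(H)|}a(H)$.

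So your route runs the paper's logic in the opposite direction. It shows that, given Schmitt's formula, the Humpert--Martin formula is equivalent to Stanley's reciprocity $\chi_H(-1)=(-1)^{|V(H)|}a(H)$ applied to every contraction $G/F$. As a consequence, the paper's proof of the $K$-analogue really needs only Schmitt's formula plus Stanley's theorem, not the full Humpert--Martin result.

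Your deletion--contraction sketch for Stanley's identity is also fine once the signs are normalized. With $f(H):=(-1)^{|V(H)|}\chi_H(-1)$ one gets $f(H)=f(H\setminus e)+f(H/e)$, the same recurrence as $a$. Both sides equal $1$ on edgeless graphs, so the induction on edges closes.
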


For a \emph{multiset} $\ol{F}$ of (possibly repeated and possibly overlapping) \emph{connected} subsets of $V(G)$, write $C_{\ol{F}}(G)$ for the clan graph formed by blowing up each vertex of $G$ into a clique of size equal to the number of times it occurs in $\ol{F}$, with every copy of a vertex adjacent to every copy of each of its neighbors in $G$, and write $C_{\ol{F}}(G)/\ol{F}$ for the graph obtained from $C_{\ol{F}}(G)$ by assigning a different copy of each vertex to each subset in $\ol{F}$ containing it and contracting all vertices assigned to each subset in $\ol{F}$ into a single vertex. Thus, the vertices of $C_{\ol{F}}(G)/\ol{F}$ are the subsets in $\ol{F}$, counted with multiplicity, and two of them are adjacent if and only if they share a vertex or some vertex of one is adjacent in $G$ to some vertex of the other. We also write $r_{\ol{F}}!$ for the product of the factorials of the multiplicities of the distinct subsets appearing in $\ol{F}$. We can then give the following $K$-analogue of Humpert and Martin's result:

\begin{theorem}\label{thm:humpert-martin_K}
    The antipode of $\mWGraphs$ is given by $$ \ol{S}(G) = \sum_{\ol{F}} (-1)^{|\ol{F}|}\frac1{r_{\ol{F}}!}a(C_{\ol{F}}(G)/\ol{F})G_{\ol{F}}, $$ where $\ol{F}$ ranges over all finite multisets of nonempty connected subsets of $V(G)$ whose union is $V(G)$, $G_{\ol{F}}$ is the disjoint union of the induced subgraphs of $G$ corresponding to the elements of $\ol{F}$, and $r_{\ol{F}}!$ denotes the product of the factorials of the multiplicities of each repeated part of $\ol{F}$.
\end{theorem}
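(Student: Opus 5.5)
Start from Theorem~\ref{thm:schmitt_K}, which expresses $\ol{S}(G)$ as a sum of $(-1)^{|\ol{\pi}|}G_{\ol{\pi}}$ over ordered sequences $\ol{\pi}$ of nonempty subsets covering $V(G)$. The aim is to regroup this sum and cancel terms, in the same way Humpert and Martin obtain Theorem~\ref{thm:humpert-martin} from Schmitt's formula.

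\textbf{Step 1: split into connected pieces.} Since $\mWGraphs$ is freely generated by connected graphs, each $G_{\ol{\pi}}$ is a disjoint union of connected components. Replace each subset in $\ol{\pi}$ by the connected components of the graph it induces. A single term can then be described by a finite multiset $\ol{F}$ of connected subsets covering $V(G)$, and it equals the monomial $G_{\ol{F}}$. The coefficient of $G_{\ol{F}}$ is a signed count of the data that produce $\ol{F}$:
\begin{itemize}
\item an ordered sequence of ``blocks'';
\item a grouping of the elements of $\ol{F}$ into these blocks;
\item a labelling of identical elements of $\ol{F}$, which contributes the factor $1/r_{\ol{F}}!$ once we pass from labelled objects to the multiset.
\end{itemize}
The condition is that each block, taken as the union of its assigned pieces, has exactly those pieces as its components. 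Equivalently, no two pieces in the same block may share a vertex or be joined by an edge of $G$. If two pieces in one block shared a vertex, the union would not be a set with those components; if they were adjacent, they would merge into one component.

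\textbf{Step 2: reduce to a coloring count.} Call two pieces of $\ol{F}$ ``adjacent'' if they overlap or are joined by an edge. By the description given just before the theorem, this is exactly adjacency in $C_{\ol{F}}(G)/\ol{F}$. The constraint from Step 1 therefore says that each block is an independent set of $C_{\ol{F}}(G)/\ol{F}$. So an ordered sequence of blocks is an ordered partition of the vertex set of $K:=C_{\ol{F}}(G)/\ol{F}$ into nonempty stable sets, weighted by $(-1)^{\#\text{blocks}}$. After labelling, the coefficient of $G_{\ol{F}}$ becomes
\[
\frac{1}{r_{\ol{F}}!}\sum_{k}(-1)^k\,\#\{\text{surjective proper colorings } V(K)\to[k]\}.
\]

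\textbf{Step 3: evaluate the alternating sum.} This sum is the standard evaluation: $\sum_k (-1)^k \cdot(\text{number of ordered stable partitions of } K \text{ into } k \text{ blocks})$ equals $\chi_K(-1)$. This holds because $\chi_K(n)=\sum_k \binom{n}{k}\,\mathrm{surj}_k$ and $\binom{-1}{k}=(-1)^k$. By Stanley's reciprocity theorem, $\chi_K(-1)=(-1)^{|V(K)|}a(K)$. Since $|V(K)|=|\ol{F}|$, the coefficient is $(-1)^{|\ol{F}|}a(K)/r_{\ol{F}}!$, which is the formula in the statement.

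\textbf{Main obstacle.} The delicate step is Step 1, namely checking that the decomposition is a bijection. Given $\ol{\pi}$, the resulting $\ol{F}$ together with the labelled block assignment must determine $\ol{\pi}$, and conversely every admissible assignment must arise from exactly one $\ol{\pi}$. The crux is that the components of a union of pieces are exactly those pieces precisely when the pieces are pairwise disjoint and non-adjacent.

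The multiplicity bookkeeping also needs care. Repeated identical pieces are distinguishable through the block they lie in, and they must lie in different blocks anyway, since $K$ puts an edge between identical overlapping pieces. So they can be labelled freely, and dividing by $r_{\ol{F}}!$ corrects the count. I would verify this carefully, and check that it matches the single-vertex example: there $K$ is a clique on $n$ vertices, $a(K)=n!$ and $r_{\ol{F}}!=n!$, which recovers the coefficient $(-1)^n$.
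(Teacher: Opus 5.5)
Your proof is correct and has the same skeleton as the paper's. Both arguments start from Theorem~\ref{thm:schmitt_K} and group the sequences $\ol{\pi}$ by the multiset $\ol{F}$ of connected components of their parts. Both then reduce the coefficient to $\frac{1}{r_{\ol{F}}!}$ times the signed count of ordered partitions of $V(C_{\ol{F}}(G)/\ol{F})$ into stable sets, with the factor $r_{\ol{F}}!$ arising because identical pieces overlap and so must lie in different blocks.

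The only real difference is how that signed count is evaluated. The paper obtains $(-1)^{|\ol{F}|}a(C_{\ol{F}}(G)/\ol{F})$ by comparing the coefficient of the edgeless graph in Schmitt's formula (Theorem~\ref{thm:schmitt}) and Humpert--Martin's formula (Theorem~\ref{thm:humpert-martin}) for the antipode of $C_{\ol{F}}(G)/\ol{F}$ in $\WGraphs$. You instead get it directly as $\chi_K(-1)$, via $\chi_K(n)=\sum_k\binom{n}{k}\mathrm{surj}_k$ and Stanley's reciprocity theorem. Your route is more self-contained, since the paper's argument only ever uses the flat $F=\varnothing$ term of Humpert--Martin, which is essentially Stanley's theorem anyway.
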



\begin{proof}
    We can start with our formula from Theorem \ref{thm:schmitt_K}, and consider which terms cancel. The extra cancellation comes from the fact that in Theorem \ref{thm:humpert-martin_K}, the vertex sets in $\ol{F}$ are required to be connected, while in Theorem \ref{thm:schmitt_K}, the vertex sets in $\ol{\pi}$ need not be connected. Given a sequence $\ol{\pi}$, the corresponding multiset $\ol{F}$ must be obtained by taking the elements of $\ol{F}$ to be the connected components of the elements of $\ol{\pi}$, so that $G_{\ol{F}} = G_{\ol{\pi}}$. Thus, we need to show that the sum of $(-1)^{|\ol{\pi}|}$ over all sequences $\ol{\pi}$ corresponding to a given multiset $\ol{F}$ is always equal to $(-1)^{|\ol{F}|}\frac1{r_{\ol{F}}!}a(C_{\ol{F}}(G)/\ol{F})$. 
    
    We can show this by applying Theorems \ref{thm:schmitt} and \ref{thm:humpert-martin} to $S(C_{\ol{F}}(G)/\ol{F})$ and looking at the coefficient of the edgeless subgraph consisting of all the vertices of $C_{\ol{F}}(G)/\ol{F}$ with no edges between them. The vertices of $C_{\ol{F}}(G)/\ol{F}$ correspond to the parts of $\ol{F}$, and their disjoint union is thus the edgeless graph $G_{\ol{F}}/\ol{F}$ formed by contracting each part of $\ol{F}$ into a single vertex. 
    
    On the one hand, by Theorem \ref{thm:schmitt}, the coefficient of this graph $G_{\ol{F}}/\ol{F}$ in $S(C_{\ol{F}}(G)/\ol{F})$ is equal to the sum of $(-1)^{|\pi|}$ over all ordered vertex partitions $\pi$ of $V(C_{\ol{F}}(G)/\ol{F})$ such that $(C_{\ol{F}}(G)/\ol{F})_\pi$ is edgeless. This is $r_{\ol{F}}!$ times the sum of $(-1)^{|\ol{\pi}|}$ over all sequences of vertex subsets $\ol{\pi}$ such that the connected components of the parts of $\ol{\pi}$ are the parts of $\ol{F}$. This is because we can get from $\ol{\pi}$ to $\pi$ by contracting each part of $\ol{F}$ into a single vertex, keeping $|\pi|=|\ol{\pi}|$, except that actually each $\ol{\pi}$ corresponds to $r_{\ol{F}}!$ different choices of $\pi$, because in choosing an ordered vertex partition $\pi$, we treat the repeated parts in $\ol{F}$ as distinct vertices of $C_{\ol{F}}(G)/\ol{F}$, while in choosing an ordered sequence $\ol{\pi}$, repeated copies of the same connected component of $G$ are not distinguishable from each other, and for a repeated part of $\ol{F}$ with multiplicity $r_i$, there are $r_i!$ ways to assign the corresponding repeated vertices of $C_{\ol{F}}(G)/\ol{F}$ to the $r_i$ parts of $\pi$ corresponding to parts of $\ol{\pi}$ containing that connected component of $\ol{F}$ (since no part of $\ol{\pi}$ can contain the same connected component more than once).

    On the other hand, by Theorem \ref{thm:humpert-martin}, the $G_{\ol{F}}/\ol{F}$ term in $S(C_{\ol{F}}(G)/\ol{F})$ corresponds to taking the flat $F$ to have no edges. Then the number of connected components is just the number of vertices, which is equal to $|\ol{F}|$ since each vertex comes from contracting one of the parts of $\ol{F}$, so the sign is $(-1)^{|\ol{F}|}$. Then since $F$ contains no edges, contracting all the edges in $F$ does nothing, and the coefficient is thus $(-1)^{|\ol{F}|}a(C_{\ol{F}}(G)/\ol{F})$, which is $r_{\ol{F}}!$ times our desired coefficient of $G_{\ol{F}}$. The result follows from matching up those coefficients.
\end{proof}

The extra cancellation in Theorem \ref{thm:humpert-martin_K} relative to Theorem \ref{thm:schmitt_K} comes from the requirement that the vertex sets be connected, so it is worth looking at the two extreme cases where $G$ has all possible edges and where it has none. In the first case, no cancellation happens at all.

\begin{example}
    If $G$ is a clique, then it has no disconnected induced subgraphs, so Theorems \ref{thm:schmitt_K} and \ref{thm:humpert-martin_K} give the same sum with no extra cancellation. Each multiset of connected induced subgraphs $\ol{F}$ corresponds exactly to the $|\ol{F}|!/r_{\ol{F}}!$ sequences $\ol{\pi}$ of arbitrary induced subgraphs obtained by ordering its parts, with $|\ol{F}|=|\ol{\pi}|$, and $a(C_{\ol{F}}(G)/\ol{F})=|\ol{F}|!$ is just the number of total orderings on the parts of $\ol{F}$, since $C_{\ol{F}}(G)/\ol{F}$ is a contraction of a clan graph of a complete graph and is thus itself a complete graph, and an acyclic orientation on a complete graph is the same as a total order on its vertices.
\end{example}

At the other extreme, when $G$ has no edges at all, the antipode factors completely and we can check the formula against a direct computation.

\begin{example}
    If $G$ is edgeless, then $G$ is the product of its vertices, since our multiplication is given by taking disjoint unions. Thus, $\ol{S}(G)$ is just the product of $\ol{S}(v)$ over all vertices $v\in V(G)$: $$ \ol{S}(G) = \bigsqcup_{v\in V(G)}\ol{S}(v) = \bigsqcup_{v\in V}\left(-\frac{v}{\varnothing+v}\right) = \bigsqcup_{v\in V}(-v + (v\sqcup v)-(v\sqcup v\sqcup v)+\cdots). $$ Each term of $\ol{S}(v)$ thus corresponds to taking each vertex some number of times, with the sign equal to $-1$ to the power of the total number of vertices. This matches Theorem \ref{thm:humpert-martin_K} because for an edgeless graph, a multiset $\ol{F}$ of connected induced subgraphs is the same as a multiset of singleton vertices. Then the number of acyclic orientations on $C_{\ol{F}}(G)/\ol{F} = C_{\ol{F}}(G)$ is equal to the number of ways to order each set of repeated vertices among themselves, which is $r_{\ol{F}}!$. Thus, if we divide the number of acyclic orientations by $r_{\ol{F}}!$ we get 1, matching the fact that each multiset $\ol{F}$ of vertices of $G$ shows up exactly once in our expanded product, with sign equal to $(-1)^{|\ol{F}|}$.
\end{example}


\subsection{Heap interpretation of \texorpdfstring{$\omega(\ol{X}_G)$}{XG}}\label{subsec:heaps}

Recall that the involution $\omega$ on the ring of symmetric functions $\Sym$ (or on its completion $\mSym$) is the ring isomorphism given by $$ \omega(p_n) = (-1)^{n-1}p_n. $$ Comparing this to the antipode $$ S(p_n) = -p_n $$ on the Hopf algebra $\Sym$, we see that $\omega(p_n)=(-1)^n S(p_n)$, so that applying $\omega$ to $p_n$ is equivalent to applying $S$ and then negating the resulting degree-$n$ term. Since a monomial of degree $n$ picks up a sign $(-1)^n$ under the substitution $x_i\mapsto -x_i$, it follows that for any symmetric function $f$, $$ \omega(f)(x_1,x_2,\dots) = S(f)(-x_1,-x_2,\dots). $$ 

For unweighted graphs $G$, in \cite{pierson2025lyndon}, the second author used the heap interpretation of $\omega(\ol{X}_G)$ as the generating series for all ways to assign a (possibly empty) heap on $G$ to each color such that all vertices are used by at least one of the heaps, where a \newword{heap} on $G$ is an acyclic orientation on a clan graph of some induced subgraph of $G$, treating repeated copies of a vertex as indistinguishable from each other. Each such collection of color heaps corresponds to a monomial in $\omega(\ol{X}_G)$, and the exponent on each color variable is the total number of vertices in its heap, counted with multiplicity (called the \newword{size} of the heap). These graphical heaps are equivalent to the heaps of pieces from Viennot \cite{viennot2006heaps} and Cartier and Foata \cite{cartier2006problemes} and were also used by Bernardi and Nadeau in \cite{bernardi2020combinatorial} in a related CSF context. Using Marberg's character $\ol{\zeta}_{\Sym}$ sending the first variable $x_1$ to $t$ and all other variables to 0, we thus get that $\ol{\zeta}_{\Sym}(\omega(\ol{X}_G))$ is the generating series $H_G(t)$ for heaps on $G$ that use all vertices, where the coefficient of $t^n$ is the number of such heaps of size $n$, and thus $$ \ol{\zeta}_{\Sym}(\omega(\ol{X}_G)(-x_1,-x_2,\dots))=H_G(-t). $$ 

We can use Theorem \ref{thm:humpert-martin_K} to recover this heap interpretation $\ol{\zeta}_\Sym(\omega(\ol{X}_G))=H_G(t)$, and then to deduce the heap interpretation of $\omega(\ol{X}_G)$ as a consequence, showing some more general facts along the way.

\begin{prop}[cf. \cite{bernardi2020combinatorial,pierson2025lyndon}]\label{prop:omega(X_G)}
    For any unweighted graph $G$, $\omega(\ol{X}_G)$ is the sum of the monomials corresponding to all ways to cover $V(G)$ with a (possibly empty) heap for each of finitely many colors, such that every vertex is in at least one color heap and the exponent on each variable $x_i$ is the size of the $i^{\text{th}}$ color heap. Equivalently, $\omega(\ol{X}_G)$ is the generating series for ways to cover $G$ with a finite set of distinct nonempty heaps and then assign a nonempty finite set of colors to each heap, with different heaps getting disjoint sets of colors.
\end{prop}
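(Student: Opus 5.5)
The plan is to go through the Hopf algebra route the paragraph before the statement announces. First compute $\ol{\zeta}_\Sym(\omega(\ol{X}_G))$ via the antipode, then upgrade this one-variable statement to the full multivariable one using the coproduct.

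\textbf{Step 1 (one variable).} Since $G\mapsto\ol{X}_G$ is a morphism of LC-Hopf algebras, it commutes with antipodes. So $S(\ol{X}_G)=\ol{X}_{\ol{S}(G)}$, where we extend $\ol{X}$ linearly and continuously. Combining this with $\omega(f)(x)=S(f)(-x)$ gives
$$\ol{\zeta}_\Sym(\omega(\ol{X}_G))=\ol{\zeta}_\Sym\bigl(S(\ol{X}_G)\bigr)\big|_{t\mapsto -t}=\ol{\zeta}_{\WGraphs}(\ol{S}(G))\big|_{t\mapsto -t}.$$
Now substitute Theorem~\ref{thm:humpert-martin_K}. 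The character $\ol{\zeta}_{\WGraphs}$ kills every $G_{\ol{F}}$ with an edge, so only multisets $\ol{F}$ of singletons survive, because a connected subset is edgeless only if it is a single vertex. Each surviving term contributes $t^{|\ol{F}|}$.

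For such an $\ol{F}$, with multiplicity $r_v\ge1$ at each vertex $v$, the graph $C_{\ol{F}}(G)/\ol{F}$ is just the clan graph $C_{\ol{F}}(G)$. Acyclic orientations of it divided by $\prod r_v!$ count acyclic orientations with the copies of each vertex treated as indistinguishable, which is exactly the set of heaps of size $|\ol{F}|$ using all vertices. This identification is the step that needs the most care. The copies of $v$ form a clique, so each acyclic orientation induces a total order on those copies, and $S_{r_v}$ acts freely by permuting the labels. Orbits therefore correspond to heaps.

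After the substitution $t\mapsto -t$, the signs $(-1)^{|\ol{F}|}$ cancel. This gives $\ol{\zeta}_\Sym(\omega(\ol{X}_G))=H_G(t)$.

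\textbf{Step 2 (all variables).} The key fact is that $\omega$ is a Hopf map, and in particular a coalgebra map. So $\omega\circ\ol{X}$ is an LC-Hopf morphism $(\mWGraphs,\ol{\zeta}_\Sym\circ\omega\circ\ol{X})\to(\mSym,\ol{\zeta}_\Sym)$. By the terminal-object property of $(\mSym,\ol{\zeta}_\Sym)$, the coefficient extraction follows Aguiar–Bergeron–Sottile. The monomial coefficient of $x_1^{a_1}\cdots x_k^{a_k}$ in the image of $G$ equals the coefficient of $t^{a_1}\cdots t^{a_k}$ (each $t$ in its own slot) in $\zeta^{\otimes k}\circ\blacktriangle^{(k-1)}(G)$.

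The iterated coproduct $\blacktriangle^{(k-1)}G=\sum_{T_1\cup\dots\cup T_k=V(G)}G|_{T_1}\otimes\dots\otimes G|_{T_k}$. Applying $H_{G|_{T_i}}$ in slot $i$ yields: a choice of possibly empty $T_i$ covering $V(G)$, together with a heap on $G|_{T_i}$ using all of $T_i$, of size $a_i$. Since $T_i$ is determined by its heap, this is exactly a choice of one heap per color whose supports cover $V(G)$. That is the first description. If needed, I would double-check the formula on a single vertex against $\omega(\ol{X}_v)$.

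\textbf{Step 3 (equivalent form).} Group the colors by their heaps. The nonempty heaps that occur form a finite set of distinct heaps, and each is assigned the nonempty finite set of colors carrying it. Different heaps get disjoint color sets, and colors carrying the empty heap contribute nothing. This regrouping is a bijection, which gives the second description. The main obstacle I expect is Step 1's orbit count, in particular checking that the action is free and that the $1/r_{\ol{F}}!$ normalization matches the indistinguishable-copies convention for heaps.
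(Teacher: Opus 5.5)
Your proposal is correct and follows the paper's route. Step 1 is the paper's Lemma~\ref{lem:zeta(omega(XG))}: antipode compatibility, then Theorem~\ref{thm:humpert-martin_K}, then the $1/\alpha!$ count of heaps. Step 2 is the coproduct extraction of Proposition~\ref{prop:mWGraphs_map_interpretation}, with one cosmetic difference: you apply it directly to $\omega\circ\ol{X}$ using that $\omega$ is a Hopf automorphism of $\mSym$, whereas the paper applies it to $S\circ\ol{X}$ with character $H_G(-t)$ (signed heaps) and then negates the variables; your Step 3 also spells out the regrouping bijection that the paper leaves implicit.
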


We can prove Proposition \ref{prop:omega(X_G)} by first interpreting the associated character:

\begin{lemma}\label{lem:zeta(omega(XG))}
    $\ol{\zeta}_\Sym(\omega(\ol{X}_G))=H_G(t).$
\end{lemma}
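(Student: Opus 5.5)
Using the identity $\omega(f)(x_1,x_2,\dots)=S(f)(-x_1,-x_2,\dots)$, we get $\ol{\zeta}_\Sym(\omega(\ol{X}_G))=\ol{\zeta}_\Sym(S(\ol{X}_G))\big|_{t\mapsto -t}$. Since $G\mapsto\ol{X}_G$ is the unique LC-Hopf morphism $(\mWGraphs,\ol{\zeta}_\WGraphs)\to(\mSym,\ol{\zeta}_\Sym)$, it commutes with antipodes and with characters. This gives
$$\ol{\zeta}_\Sym(S(\ol{X}_G))=\ol{\zeta}_\Sym(\ol{X}_{\ol{S}(G)})=\ol{\zeta}_\WGraphs(\ol{S}(G)),$$
where $\ol{X}$ is extended linearly and continuously. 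So the plan is to compute $\ol{\zeta}_\WGraphs(\ol{S}(G))$ from Theorem~\ref{thm:humpert-martin_K}, substitute $t\mapsto -t$, and check that the result is $H_G(t)$.

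Apply $\ol{\zeta}_\WGraphs$ termwise to the formula of Theorem~\ref{thm:humpert-martin_K}. The graph $G_{\ol{F}}$ is edgeless exactly when every part of $\ol{F}$ induces an edgeless subgraph. A part is also connected, so it must be a single vertex. Thus only multisets $\ol{F}$ of singletons covering $V(G)$ survive, i.e.\ multiplicity functions $m:V(G)\to\mb{Z}_{>0}$. For such an $\ol{F}$, the graph $C_{\ol{F}}(G)/\ol{F}$ is just the clan graph $C_m(G)$, where each $v$ is blown up into a clique of size $m(v)$. We have $|\ol{F}|=\sum_v m(v)$, $r_{\ol{F}}!=\prod_v m(v)!$, and $\ol{\zeta}_\WGraphs(G_{\ol{F}})=t^{\sum_v m(v)}$ since $G$ is unweighted. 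Hence
$$\ol{\zeta}_\WGraphs(\ol{S}(G))=\sum_{m:V(G)\to\mb{Z}_{>0}}(-t)^{\sum_v m(v)}\frac{a(C_m(G))}{\prod_v m(v)!}.$$
After substituting $t\mapsto -t$, the signs disappear. This leaves $\sum_m t^{|m|}\,a(C_m(G))/\prod_v m(v)!$.

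The remaining step, which I expect to be the only real content, is to identify $a(C_m(G))/\prod_v m(v)!$ with the number of heaps on $G$ with multiplicity function $m$. A heap is an acyclic orientation of $C_m(G)$ in which the copies of each vertex are treated as indistinguishable. Each clique of copies of $v$ is totally ordered by any acyclic orientation. The group $\prod_v \mf{S}_{m(v)}$ permuting the copies within each clique acts on acyclic orientations of $C_m(G)$. This action is free: a nontrivial permutation changes the total order on some clique and so moves the orientation. Equivalently, each orbit contains exactly one orientation in which the copies of each $v$ are ordered $v_1\to v_2\to\cdots$. So the orbits, which are exactly the heaps, number $a(C_m(G))/\prod_v m(v)!$. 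Summing over all $m$ with every $m(v)\ge1$ counts the heaps using all vertices by size, which gives $H_G(t)$.

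Care is needed to justify the convergence and continuity of the morphism on infinite sums. This holds because only finitely many $m$ have a given $|m|$.
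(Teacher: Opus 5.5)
Your proof is correct and follows the paper's argument essentially step for step. You pass through the antipode- and character-preserving map $G\mapsto\ol{X}_G$ to reduce to $\ol{\zeta}_\WGraphs(\ol{S}(G))$, apply Theorem~\ref{thm:humpert-martin_K} so that only multisets of singletons survive, and identify $a(C_m(G))/\prod_v m(v)!$ with a count of heaps. Your free-action argument for that last identification usefully sharpens the paper's one-line justification. Doing the sign change as $t\mapsto -t$ at the character level, rather than negating the variables at the end, is only a cosmetic difference.
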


\begin{proof}
    Since the $G\mapsto \ol{X}_G$ map is an LC-Hopf algebra map, it preserves both characters and antipodes, so $$ \ol{\zeta}_\Sym(\omega(\ol{X}_G)(-x_1,-x_2,\dots))=\ol{\zeta}_{\Sym}(S(\ol{X}_G))=(\ol{\zeta}_\Sym)^{-1}(\ol{X}_G)=(\ol{\zeta}_{\WGraphs})^{-1}(G)=\ol{\zeta}_{\WGraphs}(\ol{S}(G)), $$ where by the inverse we mean the inverse element in the group of characters, with product given by convolution. By Theorem \ref{thm:humpert-martin_K} and linearity of characters, $$ \ol{\zeta}_{\WGraphs}(\ol{S}(G)) = \sum_{\ol{F}}(-1)^{|\ol{F}|}\frac1{r_{\ol{F}}!}a(C_{\ol{F}}(G)/\ol{F})\ol{\zeta}_\WGraphs(G_{\ol{F}}). $$ Now, $\ol{\zeta}_\WGraphs(G_{\ol{F}})=0$ unless $G_{\ol{F}}$ is edgeless, so since the parts of $\ol{F}$ are the connected components of $G_{\ol{F}}$, this means all parts of $\ol{F}$ must be singleton vertices. In that case, $\ol{\zeta}_\WGraphs(G_{\ol{F}})=t^{|V(G_{\ol{F}})|}=t^{|\ol{F}|}.$ Then contracting each part of $\ol{F}$ into a single vertex does not do anything, since the parts of $\ol{F}$ are already singleton vertices, and choosing $\ol{F}$ just corresponds to choosing some composition $\alpha$ with all positive parts that represent the multiplicity of each vertex of $G$ in $\ol{F}$, and we write $C_\alpha(G)$ for the clan graph $C_{\ol{F}}(G)=C_{\ol{F}}(G)/\ol{F}$. Putting this together, we get $$ \ol{\zeta}_{\WGraphs}(\ol{S}(G)) = \sum_{\alpha}(-t)^{|\alpha|}\frac1{\alpha!}a(C_\alpha(G)). $$ But $\frac1{\alpha!}a(C_\alpha(G))$ is equal to the number of heaps on the $\alpha$-clan graph $C_\alpha(G)$ (called \newword{heaps of type $\alpha$}), because a heap is just an acyclic orientation on some clan graph, and the $\frac1{\alpha!}$ accounts for the fact that in a heap, we want to treat repeated copies of the same vertex as indistinguishable from each other, and hence need to divide the number of acyclic orientations by $\alpha_v!$ for each $v$, since $\alpha_v$ is the number of copies of the vertex $v$ in $C_\alpha(G)$. Since $|\alpha|=|V(C_\alpha(G))|$ is the size of each heap on $C_\alpha(G)$, the coefficient of $(-t)^n$ is equal to the total number of heaps of size $n$ on $G$. Thus, $\ol{\zeta}_\Sym(\omega(\ol{X}_G)(-x_1,-x_2,\dots))=H_G(-t),$ so negating all the variables, $\ol{\zeta}_\Sym(\omega(\ol{X}_G))=H_G(t).$
\end{proof}

The heap interpretation of $\omega(\ol{X}_G)$ in Proposition \ref{prop:omega(X_G)} is then a fairly immediate consequence of Lemma \ref{lem:zeta(omega(XG))}, because of the following more general fact that lets us turn an interpretation of a character on $\mWGraphs$ into an interpretation of the corresponding map to $\mSym$:

\begin{prop}\label{prop:mWGraphs_map_interpretation}
    Suppose we have a character $\zeta:\mWGraphs\to\mb{K} \ldb t \rdb $ that is the generating series for some sort of (possibly signed) combinatorial structures on the graph $G$, where each monomial has sign corresponding to the sign of its associated structure, and exponent corresponding to some measure of the ``size" of the structure. Then the corresponding image of $G$ in $\mSym$ is the generating series for ways to choose an induced subgraph of $G$ for each color, with all but finitely many of them empty and with every vertex of $G$ in at least one of them, and choose a structure on each, such that the exponent on each color variable $x_i$ is the size of the associated structure, and the sign of each monomial is the product of the signs of its associated structures.
\end{prop}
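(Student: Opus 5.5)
The plan is to write down the unique induced map explicitly, using the universal property of $(\mSym,\ol{\zeta}_\Sym)$ as a terminal object. For the cocommutative CHA case, Aguiar–Bergeron–Sottile give the unique map $\Psi:A\to\Sym$ by
$$ \Psi(f)=\sum_{\alpha}\zeta^{\otimes k}\bigl(\Delta^{(k-1)}_{\alpha}f\bigr)\,M_\alpha, $$
where $\Delta^{(k-1)}_\alpha$ is the iterated coproduct projected to the graded pieces indexed by the composition $\alpha$. In the LC setting with a $\mb{K}\ldb t\rdb$-valued character, Marberg's construction plays the same role. Equivalently, and more cleanly for our purposes, the coefficient extraction reads as follows: for $f\in\mWGraphs$, the image $\Psi(f)$ evaluated at $x_1,\dots,x_k$ (all other variables set to $0$) equals
$$ \bigl(\ol{\zeta}_{x_1}\otimes\cdots\otimes\ol{\zeta}_{x_k}\bigr)\bigl(\blacktriangle^{(k-1)}f\bigr), $$
where $\ol{\zeta}_{x_i}$ denotes $\zeta$ with $t$ replaced by $x_i$. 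I would justify this formula in three steps. First, check that the map defined this way is a morphism: it is multiplicative because characters and coproducts are algebra maps, and it is symmetric because $\blacktriangle$ is cocommutative. Second, observe that composing with $\ol{\zeta}_\Sym$ (take $k=1$) returns $\zeta$. Third, invoke the uniqueness from the terminal-object property quoted in \S\ref{subsec:hopf-background}. Stability under $k\to k+1$ holds because the character applied to the empty graph is $1$, and this is what lets us pass to infinitely many variables.

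The combinatorics then follows by unwinding the $(k-1)$-fold iterate of $\blacktriangle$. By coassociativity,
$$ \blacktriangle^{(k-1)}G=\sum_{T_1\cup\cdots\cup T_k=V(G)}G|_{T_1}\otimes\cdots\otimes G|_{T_k}, $$
which I would prove by induction on $k$ directly from the defined coproduct; the union condition composes correctly. Applying $\ol{\zeta}_{x_1}\otimes\cdots\otimes\ol{\zeta}_{x_k}$ gives
$$ \sum_{T_1\cup\cdots\cup T_k=V(G)}\ \prod_{i=1}^k\zeta(G|_{T_i})\big|_{t=x_i}. $$
By hypothesis, each $\zeta(G|_{T_i})$ is a signed sum over structures on $G|_{T_i}$, weighted by $t^{\text{size}}$. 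Expanding the product therefore gives exactly a sum over the choices of an induced subgraph (possibly empty) for each color $i\le k$ that together cover $V(G)$, with a structure on each one. The exponent of $x_i$ is the size of the $i$th structure and the sign is the product of the individual signs. Letting $k\to\infty$ (empty subgraphs contribute the factor $1$) yields the statement, with all but finitely many of the subgraphs empty.

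The main obstacle is the first step: rigorously identifying the unique map with this evaluation formula in Marberg's LC setting. Two points need care there: continuity and convergence of the infinite sums, and the fact that the character takes values in $\mb{K}\ldb t\rdb$ rather than $\mb{K}$. I would handle these by citing Marberg's explicit description of the terminal morphism (the LC analogue of Aguiar–Bergeron–Sottile's formula), or, failing that, by checking degree by degree that only finitely many terms contribute to each monomial. For a fixed monomial, each $T_i$ with $x_i$ absent must give the empty graph (the character agrees with the counit at $t=0$), so the sum is finite.
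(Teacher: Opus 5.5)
Your proposal is correct and matches the paper's proof: both use Marberg's description of the induced map as the $k\to\infty$ limit of applying $\zeta|_{t=x_i}$ to the $i$th factor of the $k$-fold iterated $\blacktriangle$, expand $\blacktriangle^{(k)}G$ as a sum over covers $S_1\cup\dots\cup S_k=V(G)$, and read off a structure on each $G|_{S_i}$. The paper simply cites Marberg for that explicit formula. If you derive it yourself from uniqueness, you also need to check compatibility with the coproduct, which follows from coassociativity of $\blacktriangle$; multiplicativity and symmetry alone are not enough.
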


\begin{proof}
    From Marberg \cite{marberg2023kromatic}, the image in $\mSym$ of $G\in\mWGraphs$ can be computed by applying $\blacktriangle^{(k)}$ to $G$ (which means repeatedly applying $\blacktriangle$ to the first factor in each current tensor term until all the tensors have $k$ total factors), then sending each tensor to the product $$ G_1\otimes \dots \otimes G_k\mapsto (\zeta(G_1)|_{t=x_1})\cdots (\zeta(G_k)|_{t=x_k}), $$ and then taking the limit as $k\to\infty$ of the sum of all such monomials. The $k$-fold tensor product for $\blacktriangle$ is $$ \blacktriangle^{(k)}G = \sum_{S_1\cup\dots\cup S_k=V(G)}G|_{S_1}\otimes \dots \otimes G|_{S_k}. $$ Thus, the monomials in the image of $G$ in $\mSym$ are precisely the monomials coming from one of the products $$ (\zeta(G|_{S_1})|_{t=x_1})\dots(\zeta(G|_{S_k})|_{t=x_k}), $$ corresponding to covering $G$ with the induced subgraphs $G|_{S_1},\dots,G|_{S_k}$, and then choosing a structure colored with color $i$ on the $i^{\text{th}}$ induced subgraph $G|_{S_i}$.
\end{proof}

The $G\mapsto\ol{X}_G$ map is the case corresponding to the character $\ol{\zeta}_{\WGraphs}$ where the structures are independent sets, with size given by the sum of the weights of the vertices in the independent set. For unweighted $G$, the $G\mapsto S(\ol{X}_G)$ map is the case corresponding to the character $(\ol{\zeta}_{\WGraphs})^{-1}(G)=\ol{\zeta}_{\WGraphs}(\ol{S}(G))=H_G(-t),$ so $S(\ol{X}_G)$ is the generating series for ways to cover $G$ with colored signed heaps, with size given by the sum of the vertex weights within the heap, and sign equal to $(-1)^{\tn{heap size}}.$ Then $\omega(\ol{X}_G)$ corresponds to plugging in the negatives of all the $x_i$ variables in $S(\ol{X}_G)$, so it is the generating series for ways to cover $G$ with colored unsigned heaps, as claimed in Proposition \ref{prop:omega(X_G)}.

\subsection{Factorizations using grouplike elements}\label{subsec:grouplike}

Next, we give a Hopf algebra interpretation for some of the second author's calculations and results from \cite{pierson2025lyndon} about power sum expansions for $\ol{X}_G$ and $\omega(\ol{X}_G)$. An element $f$ of a Hopf algebra is \newword{primitive} if $\Delta f = f\otimes 1 + 1\otimes f$, and \newword{grouplike} if $f\ne 0$ and $\Delta f = f\otimes f$. A \newword{Schauder basis} for an LC-Hopf algebra is a set of elements such that every element is a unique but possibly infinite linear combination of the elements in the Schauder basis. Our main idea now will essentially be to break our elements down into (possibly infinite) linear combinations of grouplike elements, because the images in $\mSym$ of those elements can be conveniently factored. The following proposition collects everything we will need about grouplike elements: part (a) is how we will recognize them, part (b) is the factorization we are after, part (c) is what makes that factorization enough to pin down an entire map to $\mSym$, and part (d) is the special case $A=\mSym$, which is the form we will use when we compare factorizations of symmetric functions against each other.

\begin{prop}\label{prop:grouplike}
    Let $A$ be a commutative LC-Hopf algebra with character $\ol{\zeta}_A:A\to\mb{K} \ldb t \rdb $, and for $f\in A$ write  $\ln(f):=-\sum_{n\ge 1}\frac1n(1-f)^{n}$ and $\exp(f):=\sum_{n\ge 0}\frac1{n!}f^n $ whenever these series are defined.
    \begin{enumerate}[label=\textup{(\alph*)}]
        \item An element $f\in A$ for which $\ln(f)$ is defined is grouplike if and only if $\ln(f)$ is primitive. Equivalently, the grouplike elements whose logarithms are defined are exactly the exponentials of the primitive elements.
        \item Suppose $A$ is cocommutative, and let $\phi$ be the LC-Hopf algebra map from $(A,\ol{\zeta}_A)$ to $(\mSym,\ol{\zeta}_\Sym)$, which is guaranteed from \cite{marberg2023kromatic} to be unique. Then for any grouplike $f\in A$ whose logarithm is defined, the image of $f$ in $\mSym$ factors over variables as $$ \phi(f)(x_1,x_2,\dots) = \prod_{i\ge 1}\ol{\zeta}_A(f)(x_i). $$ That is, we can find the symmetric function $\phi(f)$ by plugging in each $x_i$ variable in place of $t$ in the single-variable power series $\ol{\zeta}_A(f)(t)$, and then multiplying over the $x_i$ variables.
        \item If $A$ has a Schauder basis $f_1,f_2,\dots$ of grouplike elements whose logarithms are defined, then $A$ is cocommutative, so that (b) applies to each $f_n$, and the resulting values $\phi(f_n)=\prod_{i\ge 1}\ol{\zeta}_A(f_n)|_{t=x_i}$ determine $\phi$ by linearity and continuity.
        \item Taking $A=\mSym$, so that $\phi$ is the identity map, the grouplike elements of $\mSym$ are precisely the nonzero elements $g$ that factor over variables as $g=\prod_{i\ge 1}\ol{\zeta}_{\Sym}(g)|_{t=x_i}$, and they are also precisely the exponentials of the primitive elements (which are the infinite linear combinations of power sums $p_n$). They also all have constant term 1.
    \end{enumerate}
\end{prop}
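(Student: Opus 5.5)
I will prove the four parts in order, each feeding the next. Throughout I use only that $\Delta$ (resp.\ $\blacktriangle$) and $\epsilon$ are continuous algebra homomorphisms, so they commute with the convergent series $\ln$ and $\exp$.

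\textbf{Part (a).} Suppose $\ln(f)$ is defined; then $f=\exp(\ln f)$. Since $\Delta$ is a continuous algebra map,
\[
\Delta f=\exp(\Delta\ln f).
\]
If $\ln f=g$ is primitive, then $\Delta g=g\otimes 1+1\otimes g$, and these two summands commute in $A\otimes A$. Hence
\[
\exp(\Delta g)=\exp(g\otimes 1)\exp(1\otimes g)=(f\otimes 1)(1\otimes f)=f\otimes f,
\]
so $f$ is grouplike. Conversely, if $f$ is grouplike then $\Delta(\ln f)=\ln(f\otimes f)$. Writing $f\otimes f=(f\otimes 1)(1\otimes f)$, a product of commuting factors each of whose logarithm is defined, the identity $\ln(xy)=\ln x+\ln y$ for commuting $x,y$ gives
\[
\ln(f\otimes f)=\ln f\otimes 1+1\otimes\ln f .
\]
The one point to justify carefully is that this identity holds as formal power series and transfers to convergent series in an LC topology. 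That is routine, and it is the same check used for $\exp$.

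\textbf{Part (b).} I use Marberg's description of $\phi$, already quoted in the proof of Proposition~\ref{prop:mWGraphs_map_interpretation}: $\phi(f)$ is the limit over $k$ of
\[
(\ol{\zeta}_A|_{t=x_1}\otimes\cdots\otimes\ol{\zeta}_A|_{t=x_k})\,\Delta^{(k)}f .
\]
If $f$ is grouplike, then $\Delta^{(k)}f=f^{\otimes k}$. The displayed expression is therefore $\prod_{i=1}^k\ol{\zeta}_A(f)(x_i)$, and letting $k\to\infty$ gives the claim. The content needed is that this iterated-coproduct description is valid for any cocommutative LC-Hopf algebra, not only for $\mWGraphs$. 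I will cite Marberg's construction of the terminal map for this, and that citation is the step I expect to require the most care.

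\textbf{Part (c).} Cocommutativity follows because $\tau\circ\Delta$ and $\Delta$ are continuous linear maps that agree on each $f_n$, since $\Delta f_n=f_n\otimes f_n$ is symmetric. They agree on the Schauder basis, hence everywhere. Then $\phi$ is continuous and linear, so it is determined by its values on the basis, and those values are given by (b).

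\textbf{Part (d).} Apply (b) with $A=\mSym$, where $\phi$ is the identity. Every grouplike $g$ with $\ln g$ defined then factors as $g=\prod_i\ol{\zeta}_\Sym(g)|_{t=x_i}$. Conversely, suppose $g$ factors this way and is nonzero. Then $\ol{\zeta}_\Sym(g)$ has constant term $\epsilon(g)$, since characters agree with the counit at $t=0$. Taking logarithms factor by factor, with $\ln\ol{\zeta}(g)(t)=\sum_n a_nt^n$, gives $\ln g=\sum_n a_n p_n$. This is primitive, since each $p_n$ is, so $g$ is grouplike by (a).

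For the constant term, a grouplike $g$ satisfies $\epsilon(g)^2=\epsilon(g)$, applying $\epsilon\otimes\epsilon$ to $\Delta g=g\otimes g$. Also $g\neq0$ together with the degree-zero part of $\Delta g=g\otimes g$ forces $\epsilon(g)\neq 0$. Hence $\epsilon(g)=1$, so $\ln g$ is always defined. Finally, the primitives of $\mSym$ are exactly the infinite combinations of the $p_n$. This holds because $\Delta p_\lambda$ contains cross terms $p_\mu\otimes p_\nu$ whenever $\ell(\lambda)\ge2$, and comparing coefficients in the $p$-basis kills every such term.
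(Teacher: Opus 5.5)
Your argument is correct except for one small missing step in (d), described in the second paragraph. Parts (a) and (c), and the forward half of (d), match the paper's argument essentially step for step. The real difference is in (b).

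The paper never uses an explicit formula for $\phi$. It takes $\ln(f)$, which is primitive by (a), and observes that $\phi(\ln f)=\ln\phi(f)$ is then primitive in $\mSym$, so it equals $\sum_n a_np_n$. It reads off the $a_n$ from $\ln\ol{\zeta}_A(f)=\ol{\zeta}_A(\ln f)=\ol{\zeta}_\Sym(\ln\phi(f))=\sum_n a_nt^n$ and then exponentiates. You instead feed $\Delta^{(k)}f=f^{\otimes k}$ into Marberg's iterated-coproduct formula for the terminal morphism, the same description the paper quotes in the proof of Proposition~\ref{prop:mWGraphs_map_interpretation}.

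Your route is shorter. It does not need $\ln(f)$ to exist, and it does not need the description of the primitives of $\mSym$. It does rely on that explicit formula holding for an arbitrary cocommutative LC-Hopf algebra, not just $\mWGraphs$. It does hold by Marberg's construction: the degree-zero tensor factors collapse because characters restrict to $\epsilon$ at $t=0$. The paper's route uses only that $\phi$ is a continuous Hopf map intertwining the characters, so it effectively reproves that $\phi$ is forced on grouplike elements. The cost is that it needs the classification of primitives in $\mSym$, which the paper simply asserts and you at least sketch.

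The missing step is in the converse half of (d). You note that $\ol{\zeta}_\Sym(g)$ has constant term $\epsilon(g)$ and then take its logarithm. At that point nothing you have said gives $\epsilon(g)=1$. Your later argument for $\epsilon(g)=1$ assumes $g$ is already grouplike, which is exactly what this direction is trying to prove.

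The fix is the paper's observation. Let $c$ be the constant term of $\ol{\zeta}_\Sym(g)$. The partial products $\prod_{i\le k}\ol{\zeta}_\Sym(g)(x_i)$ have constant term $c^k$, which stabilizes only when $c\in\{0,1\}$. If $c=0$, the partial products have no terms of degree below $k$, so the infinite product, and hence $g$, would be $0$. So $c=1$, the logarithm is defined, and the rest of your argument goes through.
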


\begin{proof}
    For (a), let $g:=\ln(f)$, so that $f =\exp(g)$. Assume $g$ is primitive, so $\Delta g = g\otimes1 + 1\otimes g$. Using the fact that the coproduct map is an algebra morphism, we get
    \begin{align*}
        \Delta f = \Delta \exp(g) &=\Delta\left(\sum_{n\ge 0}\frac1{n!}g^n\right)=\sum_{n\ge 0}\frac1{n!}(\Delta g)^n=\sum_{n\ge 0}\frac1{n!}(g\otimes 1+1\otimes g)^n \\
        &= \sum_{n\ge 0}\frac1{n!}\sum_{k=0}^n \binom nk g^k\otimes g^{n-k}=\sum_{n\ge 0}\sum_{k=0}^n\left(\frac1{k!}g^k\right)\otimes\left(\frac1{(n-k)!}g^{n-k}\right) \\
        &= \left(\sum_{k\ge 0}\frac1{k!}g^k\right)\otimes \left(\sum_{\ell \ge 0}\frac1{\ell!}g^\ell\right)=\exp(g)\otimes \exp(g)=f\otimes f,
    \end{align*}
    and $f\ne 0$ since $\epsilon(f)=\exp(\epsilon(g))=1$, as $\epsilon(g)=0$ by the counit axiom.
    On the other hand, assuming $\Delta f=f\otimes f$, we get $\Delta(1-f)=\Delta 1- \Delta f = 1\otimes 1-f\otimes f.$ Then we can expand
    \begin{align*}
        \Delta g = \Delta \ln(f) = \Delta\left(-\sum_{n\ge 1}\frac1n(1-f)^{n}\right)&=-\sum_{n\ge 1}\frac1n(\Delta(1-f))^{n}=-\sum_{n\ge 1}\frac1n(1\otimes 1-f\otimes f)^{n}\\
        &= -\sum_{n\ge 1}\frac1n\bigl(1\otimes 1-(f\otimes 1)(1\otimes f)\bigr)^{n}=\ln\bigl((f\otimes 1)(1\otimes f)\bigr)\\&=\ln(f\otimes 1)+\ln(1\otimes f)=\ln(f)\otimes 1+1\otimes \ln(f)\\&=g\otimes 1+1\otimes g,
    \end{align*}
    hence $g=\ln(f)$ is primitive, as needed.

    For (b), since $f$ is grouplike, $\ln(f)$ is primitive by (a), so $\Delta(\ln(f))=\ln(f)\otimes 1 + 1\otimes \ln(f),$ and applying the Hopf algebra map $\phi$ gives $$ \Delta(\phi(\ln(f))) = \phi(\ln(f))\otimes1+1\otimes\phi(\ln(f)), $$ so $\phi(\ln(f))=\ln(\phi(f))$ is also primitive. The primitive elements in $\mSym$ are precisely the (possibly infinite) linear combinations of the power sums $p_n$, so we can write $\ln(\phi(f))=\sum_{n\ge 1}a_n p_n$. Since $\ol{\zeta}_\Sym(p_n) = t^n$, the character evaluated at $\ln(\phi(f))$ is then $$ \ln(\ol{\zeta}_A(f))=\ol{\zeta}_A(\ln(f))=\ol{\zeta}_\Sym(\ln(\phi(f)))=\ol{\zeta}_\Sym\left(\sum_{n\ge 1}a_np_n\right) = \sum_{n\ge 1}a_nt^n. $$ Thus, we can write the primitive element $\ln(\phi(f))$ in $\mSym$ as the sum of its character evaluated at each of the $x_i$ variables: $$ \ln(\phi(f))=\sum_{n\ge 1}a_np_n =\sum_{i\ge 1}\sum_{n\ge 1}a_nx_i^n =\sum_{i\ge 1}\ln(\ol{\zeta}_A(f))(x_i). $$ Taking exponentials turns the sum into a product, giving the claimed formula.

    For (c), we know that every element of $A$ can be written as a possibly infinite linear combination $f=\sum_{n\ge 1}a_nf_n$. Then by linearity of the coproduct, $\Delta f = \sum_{n\ge 1}a_n\Delta f_n =\sum_{n\ge 1}a_nf_n\otimes f_n,$ which stays the same when the order of the factors is swapped within each tensor product. Thus, $A$ is cocommutative, so there is an induced map $(A,\ol{\zeta}_A)\to (\mSym,\ol{\zeta}_\Sym)$, and such a map is uniquely defined by where it sends the $f_n$'s, which is given by (b).

    For (d), take $A=\mSym$. First, applying the counit axiom $(\epsilon\otimes\tn{id})\Delta g=g$ to a grouplike $g$ gives $\epsilon(g)g=g$, so $\epsilon(g)=1$ since $g\ne 0$, and since the counit of $\mSym$ picks out the constant term, every grouplike element of $\mSym$ has constant term 1, so its logarithm is defined. There is a unique LC-Hopf algebra map from $(\mSym,\ol{\zeta}_{\Sym})$ to itself, which must thus be the identity map, so if $g\in\mSym$ is grouplike then (b) says that $g=\phi(g)$ factors over variables as $g=\prod_{i\ge 1}\ol{\zeta}_{\Sym}(g)|_{t=x_i}$, and (a) says that $g$ is the exponential of a primitive element. Conversely, suppose $g\ne 0$ factors over variables in this way. Then the power series $\ol{\zeta}_{\Sym}(g)$ must have constant term 1, or else the product defining $g$ would be 0 or not well-defined, so we can write its logarithm as another formal power series with constant term 0, $$ \ln(\ol{\zeta}_{\Sym}(g))=\sum_{n\ge 1}a_n t^n, $$ where $a_1,a_2,\dots\in\mb{K}$ are scalars. Taking logarithms in the factorization then gives $$ \ln(g)=\sum_{i\ge 1}\ln(\ol{\zeta}_{\Sym}(g))|_{t=x_i}=\sum_{i\ge 1}\sum_{n\ge 1}a_nx_i^n=\sum_{n\ge 1}a_n p_n. $$ Since the power sums are by definition primitive elements in $\mSym$, any linear combination of them is also primitive, so $\ln(g)$ is well-defined and primitive, and hence $g$ is grouplike by (a).
\end{proof}

Again assuming we are working in a commutative LC-Hopf algebra, we also have $S(f) = -f$ for primitive $f$, which by Proposition \ref{prop:grouplike}(a) implies $S(\exp(f)) = \exp(S(f))=\exp(-f)=\frac1{\exp(f)}$ for grouplike $\exp(f)$, by virtue of the fact that $S$ is an algebra homomorphism (since our Hopf algebra is commutative), and hence $S$ commutes with any polynomial or power series map from the Hopf algebra to itself, and hence in particular with the $\exp$ map. In other words, applying the antipode to a grouplike element takes its reciprocal.

For the case $A=\mWGraphs$, there is a specific set of grouplike generators that will be useful to us:

\begin{prop}\label{prop:mWGraphs_primitives}
    Write $$ Y(G):= \sum_{W\se V(G)}G|_W $$ for the sum of all the induced subgraphs of $G$. Then $Y(G)$ is grouplike in $\mWGraphs$, and the $Y(G)$'s form a basis for the finite linear combinations of graphs.
\end{prop}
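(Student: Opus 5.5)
We prove the two claims separately: a direct coproduct computation for grouplikeness, and a unitriangularity argument for the basis statement.

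\emph{Grouplikeness.} Apply $\blacktriangle$ term by term, using linearity and continuity:
$$ \blacktriangle Y(G)=\sum_{W\se V(G)}\ \sum_{T\cup U=W}G|_T\otimes G|_U. $$
Every pair $(T,U)$ of subsets of $V(G)$ arises exactly once, namely from $W=T\cup U$. Since $(G|_W)|_T=G|_T$, the double sum becomes
$$ \sum_{T,U\se V(G)}G|_T\otimes G|_U=\Bigl(\sum_{T}G|_T\Bigr)\otimes\Bigl(\sum_U G|_U\Bigr)=Y(G)\otimes Y(G). $$
Moreover $Y(G)\ne 0$, since it contains the term $G$ with coefficient $1$. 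So $Y(G)$ is grouplike. Because $Y(G)$ has the empty graph as its constant term, its logarithm is also defined, which will matter when Proposition~\ref{prop:grouplike} is applied later.

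\emph{Basis statement.} The span of graphs has a basis of isomorphism classes of graphs, where a graph means a disjoint union of connected generators. We order these classes by number of vertices, equivalently by total weight. Expanding,
$$ Y(G)=G+\sum_{W\subsetneq V(G)}G|_W, $$
and every term other than $G$ has strictly fewer vertices. So the transition matrix from $\{G\}$ to $\{Y(G)\}$, indexed by isomorphism classes, is unitriangular with respect to this grading. Each graded piece (a fixed number of vertices, or a fixed total weight) contains finitely many classes, so the matrix restricted to each piece is finite unitriangular and hence invertible over $\mb{K}$.

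Inverting is then a recursion on the number of vertices:
$$ G=Y(G)-\sum_{W\subsetneq V(G)}G|_W. $$
Each $G|_W$ on the right has fewer vertices, so by induction it is a finite combination of the $Y$'s. This shows spanning. Linear independence follows from triangularity: in a nontrivial relation, look at a class of maximal vertex count with nonzero coefficient. The corresponding graph $G$ appears in that class's own $Y(G)$ with coefficient $1$ and in no other $Y(G')$ of the relation, because every other $Y(G')$ either has equal size and contains only smaller graphs besides $G'$ itself, or has smaller size. So its coefficient in the relation cannot cancel, a contradiction.

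\emph{Expected subtlety.} The main point needing care is the meaning of "basis". The $Y(G)$ are indexed by all graphs, not only connected ones, and $Y(G\sqcup H)=Y(G)\sqcup Y(H)$ because subsets of a disjoint union split uniquely. So the claim is a vector-space basis of the finite linear combinations, not a free generating set for the algebra. With the weighted grading, graded pieces are still finite, so the unitriangularity argument goes through unchanged.
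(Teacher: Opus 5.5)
Your proof is correct, and your grouplike computation is the same as the paper's. For the basis, the paper starts from the same triangularity remark: $Y(G)$ is $G$ plus graphs with fewer vertices, so only spanning needs proof. Instead of your recursion, it then writes the inverse in closed form,
\[ G=\sum_{W\se V(G)}(-1)^{|V(G)\setminus W|}\,Y(G|_W). \]
It checks this by expanding $Y(G|_W)=\sum_{W'\se W}G|_{W'}$ and cancelling every $W'\ne V(G)$ with the involution that toggles a fixed vertex $v\notin W'$ in and out of $W$.

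Your recursion $G=Y(G)-\sum_{W\subsetneq V(G)}G|_W$, unrolled at the level of subsets, gives exactly this Möbius inversion on the Boolean lattice. The paper's closed form has the advantage that it is used right away in Corollary~\ref{cor:mWGraphs_map_given_character}. Your version has the advantage of spelling out linear independence, which the paper leaves implicit.

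One small slip: vertex count and total weight are not equivalent orders for weighted graphs. Also, a fixed vertex count contains infinitely many weighted isomorphism classes, so your finite-graded-piece remark should use total weight only. Neither your recursion nor your maximal-vertex-count independence argument needs that finiteness, so nothing breaks.
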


\begin{proof}
    To show that $\blacktriangle Y(G)=Y(G)\otimes Y(G),$ from the coproduct formula for $\mWGraphs$, we get
    \begin{align*}
        \blacktriangle Y(G)&=\sum_{W\se V(G)}\sum_{S\cup T=W}(G|_W)|_S\otimes (G|_W)|_T = \sum_{S\cup T\se V(G)}G|_S\otimes G|_T \\
        &= \left(\sum_{S\se V(G)}G|_S\right)\otimes \left(\sum_{T\se V(G)}G|_T\right)
        = Y(G)\otimes Y(G),
    \end{align*}
    as needed. To see that the $Y(G)$'s form such a basis, note that $Y(G)$ is $G$ plus a linear combination of graphs with fewer vertices, so it suffices to show that every graph is a finite linear combination of $Y(G)$'s. This works because we can write $$ G=\sum_{W\se V(G)}(-1)^{|V(G)\bs W|}Y(G|_W) $$ due to the fact that we can expand the right side as $$ \sum_{W\se V(G)}(-1)^{|V(G)\backslash W|}\sum_{W'\se W}G|_{W'}. $$ Then we get exactly one term equal to the graph $G$ itself, by setting $W'=W=V(G)$. For each $W'\subsetneq V(G)$, we get a $G|_{W'}$ term for each subset $W$ with $W'\se W$ and $W\se V(G)$. Fixing some particular vertex $v\not\in W'$, we get a sign-reversing involution by pairing each subset $W$ containing $v$ with the subset $W\backslash\{v\}$ with $v$ removed, and pairing each subset $W$ not containing $v$ with the subset $W\cup \{v\}$ with $v$ added. Thus, all the $G|_{W'}$ terms cancel when $W'\ne V(G)$, so the only remaining term is $G|_{V(G)}=G$, as claimed. Thus, the $Y(G)$'s form a basis of grouplike elements for the finite linear combinations of graphs.
\end{proof}

Feeding the $Y(G)$'s into Proposition \ref{prop:grouplike} gives us an explicit way to write the map $\mWGraphs\to\mSym$ corresponding to a given character:

\begin{cor}\label{cor:mWGraphs_map_given_character}
    The map $\mWGraphs\to\mSym$ corresponding to a given character $\zeta:\mWGraphs \to \mb{K} \ldb t \rdb $ is given by $$ G\mapsto \sum_{W\se V(G)}(-1)^{|V(G)\backslash W|}\prod_{i\ge 1}\zeta(Y(G|_W))|_{t=x_i}. $$
\end{cor}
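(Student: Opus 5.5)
The plan is to combine the Möbius-type inversion formula from the proof of Proposition~\ref{prop:mWGraphs_primitives} with the factorization of grouplike elements in Proposition~\ref{prop:grouplike}(b). Let $\phi:(\mWGraphs,\zeta)\to(\mSym,\ol{\zeta}_\Sym)$ denote the unique LC-Hopf algebra map induced by the character $\zeta$. Its existence and uniqueness come from the terminality of $(\mSym,\ol{\zeta}_\Sym)$, which applies because $\mWGraphs$ is cocommutative.

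The first step is to write $G$ in terms of the grouplike elements. The proof of Proposition~\ref{prop:mWGraphs_primitives} gives
\[
G=\sum_{W\se V(G)}(-1)^{|V(G)\setminus W|}\,Y(G|_W).
\]
This is a finite sum, so applying the linear map $\phi$ gives
\[
\phi(G)=\sum_{W\se V(G)}(-1)^{|V(G)\setminus W|}\,\phi\bigl(Y(G|_W)\bigr).
\]

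The second step is to evaluate each $\phi(Y(G|_W))$ using Proposition~\ref{prop:grouplike}(b). That result needs $Y(G|_W)$ to be grouplike with a well-defined logarithm.

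\begin{itemize}
\item Grouplikeness is given by Proposition~\ref{prop:mWGraphs_primitives}.
\item For the logarithm, note that $Y(G|_W)=\varnothing+g$, where $g$ is a finite sum of nonempty graphs. Each nonempty graph has degree at least $1$ in the grading by total weight, so $g^n$ lives in degrees $\ge n$. Hence $\ln(Y(G|_W))=-\sum_{n\ge1}\frac1n(-g)^n$ converges in the completion $\mWGraphs$.
\end{itemize}

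Proposition~\ref{prop:grouplike}(b) then gives
\[
\phi\bigl(Y(G|_W)\bigr)=\prod_{i\ge1}\zeta\bigl(Y(G|_W)\bigr)\big|_{t=x_i}.
\]
Substituting this into the sum from the first step yields the claimed formula.

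The main subtlety is justifying that Proposition~\ref{prop:grouplike}(b) applies in this setting. Two points need checking.

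\begin{itemize}
\item The character must take values in $\mb{K}\ldb t\rdb$ and agree with the counit at $t=0$. This forces $\zeta(Y(G|_W))$ to have constant term $\epsilon(Y(G|_W))=1$. As a result, the infinite product over $i$ is well defined in $\mSym$, and $\ln\zeta(Y(G|_W))$ makes sense.
\item Continuity and multiplicativity of $\zeta$ are needed to commute $\zeta$ with $\ln$. This is the step $\ol{\zeta}_A(\ln f)=\ln(\ol{\zeta}_A(f))$ used inside the proof of Proposition~\ref{prop:grouplike}(b).
\end{itemize}

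Both points follow from the standing assumptions on characters in \S\ref{subsec:hopf-background}, so the proof should be short once these checks are stated.
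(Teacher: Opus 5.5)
Your proposal is correct and follows essentially the same route as the paper: you write $G=\sum_{W\subseteq V(G)}(-1)^{|V(G)\setminus W|}Y(G|_W)$ as in the proof of Proposition~\ref{prop:mWGraphs_primitives}, then apply Proposition~\ref{prop:grouplike}(b) to each grouplike $Y(G|_W)$ (whose logarithm is defined because every non-identity term has positive degree), and finish by linearity. Your additional checks, that $\zeta(Y(G|_W))$ has constant term $1$ and that continuity lets $\zeta$ commute with $\ln$, just make explicit what the paper leaves implicit.
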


\begin{proof}
    By Proposition \ref{prop:mWGraphs_primitives}, each $Y(G)$ is grouplike, and its logarithm is defined, since every term of $Y(G)$ other than $\varnothing$ has at least one vertex. As $\mWGraphs$ is cocommutative, Proposition \ref{prop:grouplike}(b) then shows that the map induced by $\zeta$ sends $Y(G)\mapsto\prod_{i\ge 1}\zeta(Y(G))|_{t=x_i}$. Applying this to the expansion $G=\sum_{W\se V(G)}(-1)^{|V(G)\bs W|}Y(G|_W)$ from the proof of Proposition \ref{prop:mWGraphs_primitives} and using linearity gives the stated formula.
\end{proof}

From the definition of $Y(G)$ and linearity of characters, we can write $$ \zeta(Y(G))=\sum_{W\se V(G)}\zeta(G|_W). $$ In the case of $\ol{\zeta}_{\WGraphs}$, we get that $\ol{\zeta}_{\WGraphs}(Y(G))$ is the \newword{independence polynomial} $I_G(t)$, whose $t^n$ coefficient is the number of independent sets in $G$ of weight $n$, hence we recover the formula $$ \ol{X}_G = \sum_{W\se V(G)}(-1)^{|V(G)\bs W|}\prod_{i\ge 1}I_{G|_W}(x_i) $$ from \cite{pierson2025lyndon}. Similarly, for $G$ unweighted, if we use the inverse character, which sends $Y(G)\mapsto 1/I_G(t) = \sum_{W\se V(G)}H_{G|_W}(-t)$, we get $$ S(\ol{X}_G)=\sum_{W\se V(G)}(-1)^{|V(G)\bs W|}\prod_{i\ge 1}\frac1{I_{G|_W}(x_i)}, $$ and we can then replace $S$ with $\omega$ by negating all the variables to get $$ \omega(\ol{X}_G)=\sum_{W\se V(G)}(-1)^{|V(G)\bs W|}\prod_{i\ge 1}\frac1{I_{G|_W}(-x_i)}, $$ which was the other key formula used in \cite{pierson2025lyndon}, since $1/I_{G|_W}(-t)$ is the generating series for all heaps on $G|_W$.

\subsection{Hopf algebra interpretation of the power sum expansion}\label{subsec:p}

The purpose of introducing these formulas in \cite{pierson2025lyndon} was to give expansion formulas for $\ol{X}_G$ and $\omega(\ol{X}_G)$ in terms of the $K$-theoretic power sum bases $\{\ol{p}_\lam\}$ and $\{\ol{p}'_\lam\}$ from Section~\ref{sec:power-sum}, and those methods can also be generalized and given a Hopf algebra interpretation. Essentially, what made these power sum bases nice is that the elements $1+\ol{p}_n$ and $1+\ol{p}'_n$ are also grouplike, which by Proposition \ref{prop:grouplike}(d) means we can lift a factorization at the character level, such as $$ I_{G|_W}=\prod_{n\ge 1}(1+t^n)^{a_W(n)} $$ to a corresponding factorization of the grouplike terms at the symmetric function level, $$ \ol{X}_G=\sum_{W\se V(G)}(-1)^{|V(G)\bs W|}\prod_{n\ge 1}(1+\ol{p}_n)^{a_W(n)}. $$

Since a grouplike element is determined by the single-variable power series given by its character, we can match that power series one degree at a time against any sequence of grouplike elements with one new degree appearing in each factor. This is what lets us turn a factorization of a character into a factorization of a symmetric function:

\begin{prop}\label{prop:mSym_grouplike_factor}
     If we have any sequence of grouplike elements $1+g_1,1+g_2,\dots$ in $\mSym$ such that the lowest degree term in $g_n$ has degree $n$, then every grouplike element $g\in \mSym$ can be uniquely written in the form $\prod_{n\ge 1}(1+g_n)^{a_n}$ for some sequence of exponents $a_1,a_2,\dots\in\mathbb{K}$, where $(1+g_n)^{a_n}=\sum_{k\ge 0}\binom{a_n}k g_n^k$.
\end{prop}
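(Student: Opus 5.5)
The plan is to transfer everything to the character level using Proposition~\ref{prop:grouplike}(d). By that result, a grouplike $g\in\mSym$ is determined by the single-variable series $\ol{\zeta}_\Sym(g)\in 1+t\mb{K}\ldb t\rdb$, via $g=\prod_{i\ge 1}\ol{\zeta}_\Sym(g)|_{t=x_i}$. In the other direction, every series with constant term $1$ arises from a grouplike element: given $h(t)=1+\cdots$, write $\ln h=\sum_n b_nt^n$; then $\exp(\sum_n b_np_n)$ is grouplike with character $h$. So $g\mapsto\ol{\zeta}_\Sym(g)$ is a bijection from the grouplike elements onto $1+t\mb{K}\ldb t\rdb$. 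It is also multiplicative, since $\ol{\zeta}_\Sym$ is a character. It is continuous, and it commutes with the power operation $(1+g)^{a}=\exp(a\ln(1+g))$. Products of grouplike elements are grouplike, because $\Delta$ is multiplicative, and $(1+g_n)^{a_n}=\exp(a_n\ln(1+g_n))$ is grouplike by part~(a), since $a_n\ln(1+g_n)$ is primitive. The infinite product converges because $g_n$ has lowest degree $n$. The statement therefore reduces to the following claim: for $h_n(t):=\ol{\zeta}_\Sym(1+g_n)$, every $h\in 1+t\mb{K}\ldb t\rdb$ can be written uniquely as $\prod_n h_n^{a_n}$.

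Next I check that $h_n=1+c_nt^n+O(t^{n+1})$ with $c_n\ne 0$. This is the one place where the hypothesis on the lowest degree is used, and I expect it to be the main subtle step. The hypothesis says the lowest-degree homogeneous part of $g_n$ has degree $n$. Since $1+g_n$ is grouplike, $\ln(1+g_n)=\sum_m b_mp_m$ is primitive, and it has the same lowest-degree part as $g_n$, namely $b_np_n$ with $b_n\ne 0$, and $b_m=0$ for $m<n$. Then $h_n=\exp(\sum_{m\ge n}b_mt^m)=1+b_nt^n+O(t^{n+1})$, so $c_n=b_n\ne 0$. (Note that $\ol{\zeta}_\Sym$ could kill a nonzero homogeneous symmetric function of degree $n$ in general, so the primitivity argument is genuinely needed here.)

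It is cleanest to take logarithms: set $\ell_n:=\ln h_n=c_nt^n+O(t^{n+1})$ and $L:=\ln h$. The claim becomes the statement that $L=\sum_n a_n\ell_n$ has a unique solution $(a_n)$. The $\ell_n$ are triangular with respect to degree and have nonzero leading coefficients, so I solve degree by degree. Suppose $a_1,\dots,a_{n-1}$ have already been determined so that $L-\sum_{m<n}a_m\ell_m=O(t^n)$. Then the $t^n$ coefficient forces $a_n$ to equal that coefficient divided by $c_n$. This gives both existence and uniqueness, since at each stage only $\ell_n$ among $\ell_n,\ell_{n+1},\dots$ can affect the $t^n$ coefficient. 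The sum $\sum_n a_n\ell_n$ converges $t$-adically.

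Finally, lift the identity back to $\mSym$. Both $g$ and $\prod_n(1+g_n)^{a_n}$ are grouplike and have the same character $h$, so by the injectivity coming from part~(d) they are equal. For uniqueness in $\mSym$: two exponent sequences giving the same product give the same character, and hence, by the uniqueness at the character level, the same $a_n$.
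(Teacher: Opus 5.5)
Your proposal is correct and follows the same route as the paper: pass to the character $\ol{\zeta}_\Sym$ via Proposition~\ref{prop:grouplike}(d), solve for the $a_n$ one degree at a time, and lift back to $\mSym$ using the fact that a grouplike element is determined by its character. Taking logarithms is just an additive rephrasing of the paper's coefficient matching, and you also supply two details the paper uses without proof when it asserts a unique choice of $a_n$ at each step: your primitivity argument shows that $\ol{\zeta}_\Sym(1+g_n)$ has a nonzero $t^n$ coefficient, and you check that $\prod_n(1+g_n)^{a_n}$ is itself grouplike.
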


\begin{proof}
    From Proposition \ref{prop:grouplike}(d), we can factor $g$ as $g=\prod_{i\ge 1}\ol{\zeta}_{\Sym}(g)|_{t=x_i}$, where $\ol{\zeta}_{\Sym}(g)$ is a power series in $t$ with constant term 1. Then we can recursively find the exponents $a_1,a_2,\dots$ in order by choosing the exponent $a_n$ so that the $t^n$ coefficient in the product $\prod_{n\ge 1}(1+\ol{\zeta}_{\Sym}(g_n))^{a_n}$ matches the $t^n$ coefficient of $\ol{\zeta}_{\Sym}(g)$, and we have a unique choice for $a_n$ at each such step. Thus, the exponents $a_n$ are uniquely determined. We can then lift this to a factorization of $g$ by multiplying over the $x_i$ variables.
\end{proof}

The results in \cite{pierson2025lyndon} hinged heavily on the fact that for $g_n=\ol{p}_n$ or $g_n=\ol{p}'_n$ and $g=\phi(Y(G))$ or $g=\omega(\phi(Y(G)))$ with $G$ unweighted, where $\phi:\mWGraphs\to\mSym$ is the map induced by $\ol{\zeta}_{\WGraphs}$, the resulting exponents have combinatorial interpretations in terms of \newword{Lyndon heaps}. We will not define Lyndon heaps here, but we note that they play the role of the primitive structures described below for heaps, since every heap can be uniquely factored into a multiset of Lyndon heaps.

More generally, if we have a character $\zeta$ on $\mWGraphs$ such that $\zeta(G)$ is the generating series for some sort of structures on $G$, then $\zeta(Y(G))$ is the corresponding generating series for those structures on either $G$ or an induced subgraph of $G$. If there is a set of primitive structures (not to be confused with primitive elements of a Hopf algebra), each using some set of vertices of $G$, such that for every $W\se V(G)$, the possible structures on $G|_W$ or an induced subgraph of it are in a bijection with the finite sets of distinct primitive structures using only vertices in $W$ (with the sizes of the structures summing over unions), then we can write $$ \zeta(Y(G))=\prod_{n\ge 1}(1+t^n)^{a_G(n)}, $$ where $a_G(n)$ is the number of possible primitive structures of size $n$ on $G$ or on an induced subgraph of $G$. In particular, each $(1+t^n)$ factor can be thought of as corresponding to one of the primitive structures of size $n$, so choosing the 1 from that factor corresponds to not including that particular primitive structure in the set, while choosing the $t^n$ corresponds to including it. Then if $\phi:\mWGraphs\to \mSym$ is the map induced by the character $\zeta$, we can use Proposition \ref{prop:mSym_grouplike_factor} to lift this character factorization to a factorization $$ \phi(Y(G))=\prod_{n\ge 1}(1+\ol{p}_n)^{a_G(n)}, $$ which then gives $$ \phi(G)=\sum_{W\se V(G)}(-1)^{|V(G)\bs W|}\prod_{n\ge 1}(1+\ol{p}_n)^{a_{G|_W}(n)}. $$ As we saw earlier, $\phi(G)$ itself can be interpreted as the generating series for all ways to cover $G$ with some multiset of the structures on induced subgraphs of $G$, such that all vertices of $G$ get used, and such that each structure is assigned a color, where the exponent on the color variable is the size of the structure. Expanding, we see that the coefficient of $\ol{p}_\lam$ in $\phi(G)$ enumerates ways to cover $G$ with a set of distinct primitive structures of sizes $\lam_1,\dots,\lam_{\ell(\lam)},$ such that all vertices of $G$ are used by at least one of the structures (because the alternating sum over induced subgraphs cancels out cases where all of $\lam_1,\dots,\lam_{\ell(\lam)}$ are structures on the same proper induced subgraph). Since applying the antipode to a grouplike element takes its reciprocal, as we saw following Proposition \ref{prop:grouplike}, we automatically get $$ S(\phi(G)) =\phi(\ol{S}(G)) = \sum_{W\se V(G)}(-1)^{|V(G)\bs W|}\prod_{n\ge 1}(1+\ol{p}_n)^{-a_{G|_W}(n)}. $$ Then we automatically get a corresponding interpretation for the $\ol{p}_\lam$ coefficients of $S(\phi(G))$: $(-1)^{\ell(\lam)}[\ol{p}_\lam]S(\phi(G))$ is equal to the number of ways to choose a multiset of the primitive structures with sizes equal to the parts of $\lam$ such that every vertex is used by at least one of the structures, using the fact that for $k\ge 0$, $$ (1+x)^{-k} = \sum_{i\ge 0}(-1)^i x^i\left(\!\!\binom{k}{i}\!\!\right), $$ where $\left(\!\binom ki \!\right)$ is the number of ways to choose a multiset of $i$ elements out of $k$ options.

Similarly, if our structures instead split into primitive structures such that the possible structures on induced subgraphs of $G$ are in a bijection with the possible finite multiset of primitive structures, then we can instead write $$ \zeta(Y(G))=\prod_{n\ge 1}(1+t^n+t^{2n}+t^{3n}+\dots)^{a_G(n)}, $$ where again $a_G(n)$ counts the number of possible primitive structures of size $n$ on induced subgraphs of $G$. This follows because again each factor corresponds to one particular primitive structure, and choosing the $t^{kn}$ term from such a factor represents including $k$ copies of that particular size $n$ primitive structure in the multiset. This then similarly lifts to a symmetric function factorization: $$ \phi(G)=\sum_{W\se V(G)}(-1)^{|V(G)\bs W|}\prod_{n\ge 1}(1+\ol{p}_n')^{a_{G|_W}(n)}. $$ We can then interpret the $\ol{p}'_\lam$ coefficient in exactly the same way, as the number of ways to cover $G$ with distinct primitive structures of size $\lam_1,\dots,\lam_{\ell(\lam)}$ on subsets of the vertices such that each vertex gets used by at least one of the primitive structures, and we can similarly interpret the $\ol{p}'_\lam$ coefficient in $S(\phi(G))$ as being $(-1)^{\ell(\lam)}$ times the number of ways to cover $G$ with a multiset of the primitive structures with sizes equal to the parts of $\lam$ such that each vertex is used at least once.

Whether the $\ol{p}_\lam$ or $\ol{p}'_\lam$ basis is more natural thus essentially depends on whether the structures split more naturally into sets of distinct primitive structures, or into multisets of primitive structures. In the case of heaps, they split more naturally into multisets of primitive structures (the primitive structures being Lyndon heaps), so the $\ol{p}'_\lam$ basis is a more natural choice than the $\ol{p}_\lam$ basis.

\subsection{Hopf algebra interpretation of the monomial expansion}\label{subsec:m}

The same grouplike factorization ideas also give a clean interpretation of the monomial expansion of $\ol{X}_G$. Recall from Section~\ref{sec:background} the $K$-theoretic augmented monomial symmetric functions $\m_\lam=\ol{X}_{K_\lam}$, where $K_\lam$ is the complete graph on $\ell(\lam)$ vertices whose vertex weights are the parts $\lam_1,\dots,\lam_{\ell(\lam)}$ of $\lam$, and the product $\odot$ with $\m_\lam\odot\m_\mu=\m_{\lam\sqcup\mu}$. The proper set colorings of $K_\lam$ are the ways to choose pairwise disjoint nonempty finite sets of colors for its vertices, so expanding in the augmented monomial basis shows that $\m_\lam$ is the product, under Tsujie's $\odot$ from Section~\ref{sec:background}, of the elements $\m_{\lam_i}=\sum_{k\ge 1}\augm{(\lam_i^k)}/k!$, where $(\lam_i^k)$ is the partition with $k$ parts equal to $\lam_i$. Thus, $\odot$ is just Tsujie's $\odot$, extended to infinite linear combinations. Since the coproduct of $\augm{\lam}$ is the sum of $\augm{\mu}\otimes\augm{\nu}$ over all ways to split the parts $\lam_1,\dots,\lam_{\ell(\lam)}$ between two partitions $\mu$ and $\nu$, it follows that $\Delta(f\odot g)=\Delta(f)\odot\Delta(g)$, where $\odot$ acts on each tensor factor separately, and hence the $\odot$-product of grouplike elements is grouplike.

Now let $\phi:\mWGraphs\to\mSym$ be the map induced by $\ol{\zeta}_{\WGraphs}$, so that $\phi(Y(G))=\sum_{W\se V(G)}\ol{X}_{G|_W}$, which is $Y_G$ from Definition~\ref{def:Y}. Since $Y(G)$ is grouplike, so is $\phi(Y(G))$, and its character is $\ol{\zeta}_{\WGraphs}(Y(G))=I_G(t)$. On the other hand, for each nonempty stable set $S$ of $G$, with total weight $w(S)$, the element $1+\m_{w(S)}=1+\ol{p}_{w(S)}$ is grouplike, so the $\odot$-product of these elements is grouplike too, and setting $x_1=t$ and all other variables to 0 sends it to $1+\sum_S t^{w(S)}=I_G(t)$, since $\augm{\lam}\odot\augm{\mu}=\augm{\lam\sqcup\mu}$ only has monomials in at least two variables when $\lam$ and $\mu$ are nonempty. So by Proposition \ref{prop:grouplike}(d), $$ \phi(Y(G))=\bigodot_{S\tn{ a stable set}}\left(1+\m_{w(S)}\right)=\sum_{\substack{S\text{ a set of}\\\text{distinct stable sets}}}\m_{\lam(S)}, $$ where the sum is over all finite sets $S$ of distinct nonempty stable sets of $G$, and $\lam(S)$ is the partition whose parts are the weights of the stable sets in $S$. Applying the alternating sum over induced subgraphs from Corollary~\ref{cor:mWGraphs_map_given_character} then recovers the stable set cover expansion $$ \ol{X}_G=\sum_{W\se V(G)}(-1)^{|V(G)\bs W|}\phi(Y(G|_W))=\sum_{\mathcal{C}\in\textsf{\textup{SSC}}(G)}\m_{\lam(\mathcal{C})}, $$ where the alternating sum over $W$ has the effect of restricting to those collections of distinct stable sets that actually cover all of $V(G)$, which are exactly the stable set covers $\mathcal{C}$ of $G$. This recovers the monomial expansion of the KSF from \cite{crew2023kromatic} through the grouplike factorization machinery.

\section*{Acknowledgements}

We thank José Aliste-Prieto for suggesting the idea for the Hopf algebra part of this paper. 

\section*{AI statement}

No AI was used at any point in this research.

\printbibliography

\end{document}